\numberwithin{equation}{section}
\newtheorem{theorem}{Theorem}[section]
\newtheorem{corollary}[theorem]{Corollary}
\newtheorem{lemma}[theorem]{Lemma}
\newtheorem{proposition}[theorem]{Proposition}
\theoremstyle{definition}
\newtheorem{remark}[theorem]{Remark}
\newtheorem{definition}[theorem]{Definition}
\newtheorem{example}[theorem]{Example}
\newcommand{\C}{\mathbb{C}}
\newcommand{\D}{\mathbb{D}}
\newcommand{\B}{\mathbb{B}}
\newcommand{\R}{\mathbb{R}}
\newcommand{\id}{\mathrm{id}}
\renewcommand{\H}{\mathbb{H}}
\renewcommand{\Im}{{\operatorname{Im}\,}}
\renewcommand{\Re}{{\operatorname{Re}\,}}
\newcommand{\psh}{\operatorname{Psh}}
\begin{document}

\title{The pluricomplex Poisson kernel for convex \\ finite type domains}

\author{Leandro Arosio, Filippo Bracci,  Matteo Fiacchi}

\address{L. Arosio, M. Fiacchi: Dipartimento di Matematica, Universit\`a degli Studi di Roma ``Tor Vergata'', Via della Ricerca Scientifica 1, 00133, Roma, Italy}\email{arosio@mat.uniroma2.it, fiacchi@mat.uniroma2.it}

\address{F. Bracci: Dipartimento di Matematica e Informatica ``Ulisse Dini'', Universit\`a di Firenze, Viale Morgagni 67/A, Firenze, Italy}\email{filippo.bracci@unifi.it}

\thanks{Partially supported by the MIUR Excellence Department Project 2023-2027 MatMod@Tov awarded to the	Department of Mathematics, University of Rome Tor Vergata, by PRIN Real and Complex Manifolds: Topology, Geometry and holomorphic dynamics n.2017JZ2SW5 and by GNSAGA of INdAM}

\subjclass[2020]{32U35, 32A19, 32F18, 32A40}
\keywords{pluripotential theory; complex Monge-Amp\`ere's equation; pluricomplex Poisson kernel; convex domains of finite type}

\begin{abstract}
Given a bounded convex domain $D\subset \C^n$ of finite D'Angelo type and a boundary point $\xi\in \partial D$, we prove that the homogeneous complex Monge-Amp\`ere equation $(dd^cu)^n=0$ possesses a continuous strictly negative solution $\Omega_\xi$ that vanishes on $\partial D\setminus \{\xi\}$ and has a simple pole at $\xi$.
We establish that $\Omega_\xi(z)$ equals (up to  sign) the normal derivative at $\xi$ of the pluricomplex Green function $G_z$, and its sublevel sets are the horospheres centered at $\xi$. Moreover, $\Omega_\xi$  is the maximal element of the family of psh functions with a prescribed behavior at $\xi$, that is, it satisfies a Phragmen-Lindel\"of type-theorem and provides a reproducing formula for plurisubharmonic functions. Consequently, $\Omega_\xi$ serves as a generalisation of the  classical Poisson kernel of the unit disc.
Our approach, based on metric methods and scaling techniques, allows our results to be applied to strongly convex domains with $C^2$-smooth boundaries as well. In the course of the proof, we also establish a novel estimate of the Kobayashi distance near boundary points. 
\end{abstract}

\maketitle
\tableofcontents

\section{Introduction}
In their celebrated work \cite{BedTay}, Bedford and Taylor proposed using the complex Monge-Amp\`ere operator as the foundation for a pluripotential theory in $\C^n$. Recall that, if $u\colon D\to \R$ is a $C^2$-smooth map on a domain $D\subset \C^n$, the complex Monge-Amp\`ere operator $u\mapsto (dd^cu)^n$ is defined by
$$(dd^cu)^n=\underbrace{dd^cu\wedge\cdots\wedge dd^cu}_{n}=4^nn!\det\left(\frac{\partial^2u}{\partial z_j\partial\bar{z}_k}\right)dV,$$
where $d^c$ denotes the operator $i(\bar\partial-\partial)$ and  $dV$  the standard volume form. 
To develop a potential theory in $\mathbb{C}^n$, one must define the complex Monge-Amp\`ere operator on plurisubharmonic functions that are not necessarily $C^2$-smooth. Bedford and Taylor showed that the complex Monge-Amp\`ere operator $(dd^c)^n$ can be defined for every plurisubharmonic function $u$ in $L^\infty_{loc}(D)$, and that $(dd^c)^n(u)=0$ on $D$ is equivalent to $u$ being maximal in the sense of Sadullaev (see Section \ref{preliminaries}).

\subsection{The pluricomplex Green function}
A striking instance of this approach is the introduction by Lempert \cite{Lemp1} of the pluricomplex Green function on a bounded strongly convex domain $D\subset \C^n$ with $C^{r,\alpha}$ boundary, $r\geq 2$, $0<\alpha<1$. Indeed, Lempert  proved that for all $w\in D$ 
there exists a  continuous   function $G_{w}\colon \overline D\to [-\infty,0]$, $C^{r-1,\alpha-\varepsilon}$-smooth on $\overline D\setminus\{w\}$ where $0<\varepsilon<<1$, solving the following Monge--Amp\`ere equation:
\begin{equation}\label{greenintro}
\begin{cases}
u\in {\rm Psh}(D)\cap L^\infty_{\rm{loc}}( D\setminus \{w\}),\\
(dd^cu)^n=0 \quad\textrm{on} \quad D\setminus \{w\},\\
u(z)\stackrel{z\to\eta}\longrightarrow 0 \ \mbox{ if }\eta\in\partial D,\\
u - \log|z-w| =O(1)\quad\textrm{as} \quad z\to w.
\end{cases}
\end{equation}
The function $G_w$ is constructed using the theory of complex geodesic introduced in  \cite{Lemp1} to define  a homeomorphism
$\Phi_w$ from  $\overline D$ to the closed unit ball $ \overline \B^n\subset \C^n$, called {\it  spherical representation}, sending $w$ to the origin and sending holomorphically every complex geodesic in $D$ through $w$ in a complex geodesic in $\B^n$  through $0$. The map $\Phi_w$  is moreover $C^{r-1,\alpha-\varepsilon}$-smooth on $\overline{D}\setminus\{w\}$.
The pluricomplex Green function $G_w$ is then defined as the pull-back via $\Phi_w$ of the Green function of the ball, that is $$G_w(z):=\log|\Phi_w(z)|=\log\tanh(k_D(z,w)/2),$$ where $k_D$ denotes the Kobayashi distance of $D$. 
This formula shows that in strongly convex domains  pluripotential theory and intrinsic metrics are strongly tied: as an example the sublevel sets of $G_w$ are the Kobayashi balls centered at $w$. Another important consequence is that the pluricomplex Green function is symmetric: $G_w(z)=G_z(w).$
Notice moreover that by construction $G_w$ is harmonic on every complex geodesic through 0. 

Unfortunately, Lempert's methods are not available for more general domains. However, Demailly \cite{Demailly} (see also Klimek \cite{Kli85,KlimBook}) proved that on a bounded hyperconvex domain $D\subset \C^n$, the Monge-Amp\`ere equation \eqref{greenintro} has a unique continuous solution $G_w\colon \overline D\to [-\infty,0]$. This solution is defined via the Perron's ``balayage'' method as the upper envelope of all plurisubharmonic functions $u\leq 0$ on $D$ such that $u(z)-\log\|z-w\|\leq O(1)$ as $z\to w$.
The function $G_w(z)$ is not  symmetric in general, indeed Bedford and Demailly \cite{BedDem} provided as a counterexample a bounded strongly pseudoconvex domain of $\mathbb{C}^2$ with a real analytic boundary. 

In case $D$ is a bounded convex domain, $G_w(z)$ is symmetric. This is because, if $D$ is a bounded convex domain, the Carath\'eodory distance $c_D$ equals the Kobayashi distance $k_D$  by a result of Lempert \cite{Lem82} (also see \cite{RW83}). Therefore, the function $z\mapsto \log\tanh(k_D(z,w)/2)$ is continuous (as function with values in $[-\infty, 0)$) and plurisubharmonic on $D$. Since it is harmonic on every complex geodesic passing through $w$, it is maximal. The uniqueness of the pluricomplex Green function implies that $G_w(z)= \log\tanh(k_D(z,w)/2)$, making it symmetric.

Błocki \cite{Blocki} proved that $G_w\in C^{1,1}(\overline{D}\setminus\{w\})$ if $D$ is a strongly pseudoconvex, $C^\infty$-smooth domain. However, Bedford and Demailly \cite{BedDem} provided an example of a bounded, strongly pseudoconvex domain in $\C^2$ with a real analytic boundary such that $G_w$ is not $C^2$ on $\overline{D}\setminus\{w\}$.

\subsection{Demailly's pluriharmonic measure} If $D$ is a hyperconvex bounded domain in $\C^n$, Demailly \cite{Demailly} proved that for every $z\in D$ there exists a probability  measure $\mu_z$ on $\partial D$, called the {\it pluriharmonic  measure} at $z$ (also called {\sl Poisson measure} in \cite{BST}), such that for every plurisubharmonic function $F$ on $D$, continuous on $\overline{D}$,
\[
F(z)=\int_{\sigma\in \partial D} F(\sigma)d\mu_z(\sigma)-\frac{1}{(2\pi)^n}\int_{w\in D}|G_z(w)|dd^c F(w)\wedge (dd^c G_z)^{n-1}(w).
\]
In case $n=1$, $\mu_z$ is just the harmonic measure of the domain $D$. In particular, if $D$ is the unit disc of $\C$, $d\mu_z(\sigma)=|P(z,\sigma)|\frac{d\sigma}{2\pi}$, where $$P(z,\sigma)=\Re \left(\frac{z+\sigma }{z-\sigma}\right)=-\frac{1-\|z\|^2}{|1-z\bar\sigma|^2}$$ is the classical (negative) Poisson kernel. Consequently, in the case of the unit disc, Demailly's formula reduces to the classical Riesz-Poisson-Green's formula (see, {\sl e.g.}, \cite{Ran}).  

\subsection{The pluricomplex Poisson kernel}
The second named author, together with Patrizio and Trapani \cite{BP,BPT}, introduced and studied the  pluricomplex Poisson kernel on  bounded  strongly convex domains $D\subset \C^n$ with $C^\infty$-smooth boundary. It is proved in \cite{BP}  that  for all $\xi\in \partial D$  there exists a $C^\infty$-smooth  function $\Omega_\xi$ on $D$ solving the following Monge--Amp\`ere equation:
\begin{equation}\label{poissonintro}\begin{cases*}
u\in {\rm Psh}(D)\cap L^\infty_{\rm{loc}}( D), \\
 u<0  \mbox{ on }D,\\
(dd^cu)^n=0 \quad\textrm{on} \quad D,\\
				u(z)\stackrel{z\to\eta}\longrightarrow 0 \ \mbox{ if }\eta\in\partial D\backslash\{\xi\}\\
				u(z)\approx-\|z-\xi\|^{-1} \mbox{ as }z\to\xi\mbox{ non-tangentially}.
			\end{cases*}
		\end{equation}
The pluricomplex Poisson kernel  $\Omega_\xi$  is constructed using a generalization to the boundary of Lempert's theory introduced by Chang--Hu--Lee \cite{CHL} under the assumption that the boundary is $C^{r}$-smooth with $r\geq 14$. Chang--Hu--Lee define a 
homeomorphism $\Phi_\xi\colon \overline D\to \overline \B^n$, called the {\it boundary spherical representation}, which sends $\xi$ to $e_1$ and sends holomorphically every complex geodesic with endpoint $\xi$ in $D$ to a complex geodesic with endpoint $e_1$ in $\B^n$. Moreover the map $\Phi_\xi$ is  $C^{r-5}$-smooth on $\overline{D}\setminus\{\xi\}$.
The function $\Omega_\xi$ is defined in \cite{BP} as the  pull-back via $\Phi_\xi$ of the function $\Omega_{e_1}^{\B^n}(z)=-\frac{1-\|z\|^2}{|1-z_1|^2}.$
By construction $\Omega_\xi$ is $C^\infty$-smooth and is harmonic on complex geodesics with endpoint $\xi$.
Moreover, it is proved in  \cite{BP,BPT} that  $\Omega_\xi$ satisfies several  notable properties establishing it as a natural generalization of the classical Poisson kernel in the disc, which we now discuss.

{ The  Poisson kernel with pole at $1$ in the unit disc is the maximal element of the family of all negative subharmonic functions in the unit disc which have at least a non-tangential pole at $1$ (see, {\sl e.g.}, \cite[Lemma~5.2]{BPT}). Translating such a result on the upper-half plane, one sees that this maximality property is equivalent to the classical Phragmen-Lindel\"of theorem. This is the reason such a result  is also called  a ``Phragmen-Lindel\"of theorem''}. The function $\Omega_\xi$ satisfies a generalised Phragmen-Lindel\"of theorem: it is the greatest element of the family $\mathscr{G}_\xi$ of all strictly negative plurisubharmonic functions $u$ on $D$ that satisfy
\begin{equation}\label{MAmaxintro}
		\limsup_{t\to1^-}u(\gamma(t))(1-t)\leq-2\Re\frac{1}{\langle \gamma'(1),n_\xi\rangle}, 
\end{equation}
for all  $C^1$ curves $\gamma\colon [0,1]\to D$ such that $\gamma(1)=\xi$ and $\gamma'(1)$ is not contained in  $ T_\xi\partial D$.   By $\langle\cdot,\cdot\rangle$ we denote the standard Hermitian  product on $\C^n$.

A remarkable instance of a strong potential-metric tie is given by the following formula (which we will refer to as the {\it Poisson-horofunction formula}):

\begin{equation}\label{poisson-horofunction-intro}
h_{\xi,p}(z)=\log |\Omega_\xi(p)|-\log  |\Omega_\xi(z)|,
\end{equation}
where $h_{\xi,p}\colon D\to \R$ is the {\it horofunction} defined by
$h_{\xi,p}(z):=\lim_{w\to z}k_D(z,w)-k_{D}(w,p).$
In particular, it follows from \eqref{poisson-horofunction-intro} that the sublevel sets of $\Omega_\xi$ coincide with the sublevel sets of the horofunction, which are called {\it horospheres} centered at $\xi$. The fact that the limit in the definition of the horofunction $h_{\xi,p}$ exists is non-trivial and was proved by Abate and Venturini (see \cite[Theorem (2.6.47)]{Abatebook}),  assuming $C^3$ boundary, using 
the $C^1$-regularity of the Green function $G_w$, together with the relation $G(z,w)=\log\tanh(k_D(z,w)/2)$. Indeed they prove the following formula:
\begin{equation}\label{abate-venturini-intro}
h_{\xi,p}(z)=\lim_{w\to z}k_D(z,w)-k_{D}(w,p)=\log\left(\frac{\partial G_{p}}{\partial n_\xi}(\xi)\right)-\log\left(\frac{\partial G_{z}}{\partial n_\xi}(\xi)\right),
\end{equation}
where $n_\xi$ is the unit outward normal vector to $\partial D$ in $\xi$. 

Putting together formulas \eqref{abate-venturini-intro} and \eqref{poisson-horofunction-intro} one obtains that $\Omega_\xi(z)$  coincides (up to  sign) with  the normal derivative in $\xi$ of the Green function $G_z$, that is
\begin{equation}\label{derivata-green-intro}
\Omega_\xi(z)=-\frac{\partial G_z}{\partial n_\xi}(\xi).
\end{equation}

Finally, if  $\mu_z$ denotes the  pluriharmonic  measure, then  
\begin{equation}\label{demailly-intro}
d\mu_z(\xi)= \frac{1}{(2\pi)^n}|\Omega_\xi(z)|^n \omega_{\partial D}(\xi),
\end{equation}
  where $\omega_{\partial D}$ is a natural volume form on $\partial D$ (see Section~\ref{sectDem} for precise definitions). 


 Huang and Wang \cite{HuWa} proved (for the more general case of strongly linearly convex domains) that the method of pulling back $\Omega_{\B^n}$ via a boundary spherical representation can be improved to obtain a continuous solution to the Monge-Amp\`ere equation \eqref{poissonintro},  still retaining the notable properties mentioned above, assuming only a $C^3$-smooth boundary.

Recently, Saracco, Trapani, and the second author \cite{BST} constructed  continuous solutions $\Omega_\xi$ to the Monge-Amp\`ere equation \eqref{poissonintro} on  bounded strongly pseudoconvex domains with $C^\infty$-smooth boundaries, defining $\Omega_\xi$ as the upper envelope of the family $\mathscr{G}_\xi$  in \eqref{MAmaxintro}. 
The validity of the Poisson-horofunction formula \eqref{poisson-horofunction-intro} was left open in \cite{BST}. In this paper, we show in Section~\ref{controesempioanello} that, in fact, the Poisson-horofunction formula does not hold in general for bounded strongly pseudoconvex domains, since  horospheres do not coincide in general with the sublevel sets of $\Omega_\xi$. 

\subsection{Convex domains of finite type}
In this paper we  study  the pluricomplex Poisson kernel  on  bounded convex domains $D\subset \C^n$ of  finite D'Angelo type. 
If a domain is convex with a $C^\infty$-smooth boundary, McNeal \cite{mcneal}(see also \cite{BS}) showed that the D'Angelo finite type condition is equivalent to the finite line-type condition, {\sl i.e.}, the order of contact of the boundary with complex affine lines is bounded from above on  $\partial D$.  
 The boundedness assumption is not essential in most of our results, as we  only need to assume that $D$ does not contain any complex line. Moreover we need to assume the finite type condition only locally around a point. For the sake of clarity we will ignore this technical issue in these paragraphs.

{ Our approach is completely new and is based on  the results on strong asymptoticity of curves obtained in \cite{Arosio-Fiacchi} by the first and third authors (see also \cite{AFGG}).}
We say that two curves in $D$ with endpoint $\xi\in\partial D$ are {\it strongly asymptotic} if, suitably reparametrized, their Kobayashi distance goes to 0 as they approach $\xi$. It is proved in \cite{Arosio-Fiacchi, AFGG} that complex geodesics with endpoint $\xi$ are strongly asymptotic, and as a consequence one can prove the existence of the horofunction $h_{\xi,p}$. Notice that Abate--Venturini's proof of \eqref{abate-venturini-intro} is not applicable since no regularity results are known for the Green function $G_w$ in the finite type case.

We now sketch the construction of the function $\Omega_\xi$. In contrast with what happens in the strongly convex case, a complex geodesic $\varphi$ with endpoint $\xi$ may approach $\xi$ (complex) tangentially, and the derivative $\varphi'(1)$ may not exist. However, it is proved in \cite{Arosio-Fiacchi, AFGG} that the non-tangential limit of the normal component of the derivative always exists and is a positive real number. Indeed, if $n_\xi$ denotes the outer normal vector at $\xi$, we have
$$\varphi'_N(1):=\angle \lim_{z\to 1}\langle \varphi'(z),n_\xi\rangle>0.$$

The pluricomplex Poisson kernel  $\Omega_\xi$ is then defined as $\Omega_\xi(z):=-\frac{1}{\varphi'_N(1)},$ where $\varphi$ is a complex geodesic with endpoint $\xi$ and such that $\varphi(0)=z$. While such a complex geodesic may not be uniquely determined, strong asymptoticity implies that $\varphi'_N(1)$ is well defined and is independent of the chosen geodesic.

The following is our main result.  It is worth noting that this construction   allows to define  the pluricomplex Poisson kernel for bounded strongly convex domains  with $C^2$-smooth boundaries.
\begin{theorem}\label{main}
Let $L\geq 2$. Let $D\subset \C^n$ be a bounded convex domain with $C^L$-smooth boundary of finite line type $L$.
Then $\Omega_\xi\colon D\to (-\infty,0)$ is a continuous  function solving   the following Monge--Amp\`ere equation:
\begin{equation*}\begin{cases*}
u\in {\rm Psh}(D)\cap L^\infty_{\rm{loc}}( D), \\
 u<0  \mbox{ on }D,\\
(dd^cu)^n=0 \quad\textrm{on} \quad D,\\
				u(z)\stackrel{z\to\eta}\longrightarrow 0 \ \mbox{ if }\eta\in\partial D\backslash\{\xi\}\\
				u(z)\approx -\|z-\xi\|^{-1} \mbox{ as }z\to\xi\mbox{ non-tangentially}.
			\end{cases*}
		\end{equation*}
Moreover, 
\begin{enumerate}
\item $\Omega_\xi$ is the greatest element of the family $\mathscr{F}_\xi$ defined below;
\item $h_{\xi,p}(z)=\log |\Omega_\xi(p)|-\log  |\Omega_\xi(z)|$;
\item $\Omega_\xi(z)=-\frac{\partial G_z}{\partial n_\xi}(\xi)$;
\item if $\mu_z$ denotes the pluriharmonic measure, then  $d\mu_z(\xi)=\frac{1}{(2\pi)^n} |\Omega_\xi(z)|^n \omega_{\partial D}(\xi)$, where $\omega_{\partial D}$ is a natural  form on $\partial D$ (see \eqref{Eq:Levi-form}).
\end{enumerate}
\end{theorem}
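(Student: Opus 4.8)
\emph{Overall strategy.} The plan is to take the definition $\Omega_\xi(z)=-1/\varphi'_N(1)$ at face value and to derive every assertion from two ingredients: the strong asymptoticity of complex geodesics with endpoint $\xi$, together with the existence of the limit $\varphi'_N(1)>0$ and of the horofunction $h_{\xi,p}$, all supplied by \cite{Arosio-Fiacchi,AFGG}; and a sharp estimate for the Kobayashi distance near $\xi$ (the "novel estimate" of the abstract), obtained by feeding strong asymptoticity into the scaling machinery of \cite{Gauss,Zim,GausZim}. As a preliminary, strong asymptoticity shows that $\varphi'_N(1)$ is independent of the geodesic chosen through $z$, so $\Omega_\xi$ is well defined; it is automatically $<0$ and harmonic on each leaf of the foliation of $D$ by complex geodesics with endpoint $\xi$. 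The technical core is to prove that, for every compact $K\subset D$,
\[
k_D(z,w)+\log\bigl(-\Omega_\xi(z)\bigr)-R(w)\longrightarrow 0\qquad(w\to\xi),
\]
uniformly for $z\in K$, where $R$ depends on $w$ alone and $R(\xi-tn_\xi)=\log(2/t)+o(1)$ as $t\to0^+$. The mechanism: join $z$ to $\xi$ by a complex geodesic $\varphi$; along the leaf $\varphi(\D)$ the distance is computed by the explicit disc formula, whose expansion produces exactly the term $-\log(-\Omega_\xi(z))=\log\varphi'_N(1)$, while the error made by leaving the leaf is absorbed by strong asymptoticity, and the uniformity in $z$ and the precise constant in $R$ come from rescaling $D$ at points tending to $\xi$ and passing to an explicit convex model. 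I expect this step to be the main obstacle, precisely because a complex geodesic with endpoint $\xi$ may reach $\xi$ tangentially and need not be $C^1$ there, so the whole estimate must be run metrically and via scaling, never by differentiating geodesics at $\xi$.

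\emph{Horofunction formula, continuity, boundary behaviour.} Since the above makes the limit defining $h_{\xi,p}$ locally uniform in $z$, the horofunction is continuous and $h_{\xi,p}(z)=\log\lvert\Omega_\xi(p)\rvert-\log\lvert\Omega_\xi(z)\rvert$, which is assertion (2); equivalently $\Omega_\xi(z)=\Omega_\xi(p)\,e^{-h_{\xi,p}(z)}$, so $\Omega_\xi\in C^0(D)$. Because $h_{\xi,p}(z)\to+\infty$ as $z\to\eta\in\partial D\setminus\{\xi\}$ (a standard property of horofunctions on complete Gromov hyperbolic convex domains), $\Omega_\xi(z)\to0$ there; and $\Omega_\xi(z)\asymp-\lVert z-\xi\rVert^{-1}$ along non-tangential approach to $\xi$ follows from $\Omega_\xi(z)=-1/\varphi'_N(1)$ together with the scaling estimate $\varphi^z_N(1)\asymp\lVert z-\xi\rVert$ for non-tangential $z$.

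\emph{Plurisubharmonicity, the Monge--Amp\`ere equation, and assertion (1).} For a complex geodesic $\varphi$ with endpoint $\xi$ let $\rho_\varphi\colon D\to\D$ be the Lempert left-inverse, $\rho_\varphi\circ\varphi=\id_\D$ (near $\xi$ it is a M\"obius transformation of a supporting functional, so $\rho_\varphi(z)\to1$ at a controlled rate as $z\to\xi$), and set $u_\varphi:=\tfrac{1}{\varphi'_N(1)}\,\Omega^{\D}_1\circ\rho_\varphi$ with $\Omega^{\D}_1(\zeta)=\Re\frac{\zeta+1}{\zeta-1}=-\frac{1-|\zeta|^2}{|1-\zeta|^2}$; then $u_\varphi$ is pluriharmonic and $<0$ on $D$, $u_\varphi(\varphi(0))=\Omega_\xi(\varphi(0))$, and—using the boundary asymptotics of $\rho_\varphi$ and convexity—$u_\varphi\in\mathscr{F}_\xi$. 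Conversely, for $u\in\mathscr{F}_\xi$, a Julia-type lemma and the classical Phragm\'en--Lindel\"of principle on the disc (the Poisson kernel being extremal for the relevant boundary data) applied along the complex geodesic from a given point $z_0$ to $\xi$ give $u(z_0)\le-1/\varphi^{z_0}_N(1)=\Omega_\xi(z_0)$, hence $\sup\mathscr{F}_\xi\le\Omega_\xi$. Combining the two inequalities at each point yields $\Omega_\xi=\sup_\varphi u_\varphi=\sup\mathscr{F}_\xi$, and since $\Omega_\xi$ is itself strictly negative, psh (by this very identity), and meets the Phragm\'en--Lindel\"of bound (by the boundary asymptotics of the previous step), we get $\Omega_\xi\in\mathscr{F}_\xi$, i.e. assertion (1). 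Being a continuous supremum of pluriharmonic functions, $\Omega_\xi\in\psh(D)\cap L^\infty_{\mathrm{loc}}(D)$; and being a balayage-type envelope (equivalently, a continuous psh function harmonic on the leaves $\varphi(\D)$, which foliate $D$) it is maximal in the sense of Sadullaev, so $(dd^c\Omega_\xi)^n=0$. Together with the boundary behaviour of the previous step, this establishes that $\Omega_\xi$ solves the stated Monge--Amp\`ere problem.

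\emph{Assertions (3) and (4).} From $G_z(w)=\log\tanh(k_D(z,w)/2)\sim-2\,e^{-k_D(z,w)}$ as $w\to\xi$ and the inner-normal case $R(\xi-tn_\xi)=\log(2/t)+o(1)$ of the Kobayashi estimate one obtains $\lim_{t\to0^+}G_z(\xi-tn_\xi)/t=\Omega_\xi(z)$, i.e. $-\partial G_z/\partial n_\xi(\xi)=\Omega_\xi(z)$, which is (3). Finally, since $D$ is convex the pluricomplex Green function is symmetric, $G_w(z)=G_z(w)$, so the formula of \cite{BST} expressing Demailly's pluriharmonic measure through the normal derivative of $G_z$ applies and, combined with (3), yields $d\mu_z(\xi)=\tfrac{1}{(2\pi)^n}\lvert\Omega_\xi(z)\rvert^n\,\omega_{\partial D}(\xi)$, which is (4).
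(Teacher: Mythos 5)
Your overall architecture (definition via $\varphi'_N(1)$, horofunction formula from strong asymptoticity, harmonicity along geodesic discs plus Proposition \ref{maximaldisk} for maximality, Phragm\'en--Lindel\"of along geodesics for the upper bound in (1), and the normal-direction Kobayashi asymptotics for (3)) matches the paper, but there are two genuine gaps. The most serious one is assertion (4). You invoke symmetry of $G$ and then ``the formula of \cite{BST} expressing Demailly's pluriharmonic measure through the normal derivative of $G_z$'', combined with (3). This is exactly the step the paper cannot take: the derivation of that formula in \cite{BPT,BST} rests on boundary regularity of the pluricomplex Green function (available in the smooth strongly convex/pseudoconvex setting), which is unknown for convex domains of finite type, and your (3) only produces the radial limit $\lim_{t\to0^+}G_z(\xi-tn_\xi)/t$. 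To identify the boundary measure $\mu_{G_z}$ via Demailly's theory one needs control of $G_z(w)/\delta_D(w)$ as $w\to\xi$ for \emph{unrestricted} approach; concretely the paper proves $\lim_{w\to\xi}[k_D(w,p)+\log\delta_D(w)]=-\log(|\Omega_\xi(p)|/2)$ (Theorem \ref{Kobestimate}), deduces $\lim_{w\to\xi}G_z(w)/(-\delta_D(w))=|\Omega_\xi(z)|$ (Proposition \ref{limitratio}), and only then applies Demailly's Th\'eor\`eme 3.8 comparing $\mu_{G_z}$ with $\mu_{-\delta_D}=\omega_{\partial D}$. Your ``technical core'' estimate pins down $R(w)$ only along the inner normal, so it does not yield this; moreover the unrestricted estimate is not routine --- its proof requires the continuity of the kernel in the pole, $(z,\xi)\mapsto\Omega_\xi(z)$ (Proposition \ref{contpole}), plus localization lemmas, none of which appear in your sketch.

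The second gap concerns the exact boundary asymptotics needed for (1) and for the pole condition. Membership $\Omega_\xi\in\mathscr{F}_\xi$ requires the precise bound $\limsup_{t\to1^-}\Omega_\xi(\gamma(t))(1-t)\le-2\Re(1/\gamma'_N(1))$ along \emph{every} $K'$-convergent $C^1$ curve with $\gamma'_N(1)$ existing positive; your justification only supplies two-sided $\asymp$ bounds for non-tangential approach, which gives neither the exact constant nor the $K'$-generality. The paper obtains this from the $K'$-limit comparison with the half-plane Poisson kernel (Proposition \ref{OmegaDvsH} and Corollary \ref{lemmaomegacurv}), proved via angular-derivative arguments transferred through a geodesic and the horofunction formula; the same issue undermines your claim $u_\varphi\in\mathscr{F}_\xi$, and the description of the left inverse as ``a M\"obius transformation of a supporting functional near $\xi$'' is not available at finite type. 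On the positive side, your plurisubharmonicity mechanism is genuinely different from the paper's (which deduces psh-ness from the symmetry of $G$ together with the locally uniform convergence $G_{\xi-tn_\xi}/t\to\Omega_\xi$ of Theorem \ref{derivataGreen}), and it can be repaired cheaply: the needed inequality $u_\varphi\le\Omega_\xi$ follows from the horofunction formula and contractivity alone, since $k_D(z,\varphi(t))\ge k_\D(\rho_\varphi(z),t)$ and $k_D(\varphi(0),\varphi(t))=k_\D(0,t)$ give $h^D_{\xi,\varphi(0)}(z)\ge h^\D_{1,0}(\rho_\varphi(z))$, with no boundary analysis of $\rho_\varphi$; as written, however, that step is not substantiated.
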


Here $\mathscr{F}_\xi$ is the family of all strictly negative plurisubharmonic functions on $D$ such that 
$$\limsup_{t\to1^-}u(\gamma(t))(1-t)\leq-2\Re\frac{1}{\gamma'_N(1)},$$ for all
$C^1$-smooth curves $\gamma\colon[0,1)\to D$ which $K'$-converge to $\xi$,   such that $$\gamma'_N(1):=\lim_{t\to1^-}\langle \gamma'(t),n_\xi\rangle$$ exists positive (the notion of $K'$-convergence will be discussed later on---in particular, non-tangential convergence implies $K'$-convergence). The difference in the definition of the family $\mathscr{F}_\xi$ and the previously introduced family $\mathscr{G}_\xi$  is due to the varying behaviour of complex geodesics.

The structure of the proof of Theorem \ref{main} is reversed compared to the proof in \cite{BP,BPT}. The Poisson-horofunction formula (2) is established using the strong asymptoticity of complex geodesics. This implies the continuity of the function $\Omega_\xi$. Since, by \cite{Arosio-Fiacchi,AFGG}, complex geodesics are strongly asymptotic to the inner normal segment, we obtain property (3). This property, in turn, implies that $\Omega_\xi$ is plurisubharmonic due to the symmetry of $G_w(z)$. All previous proofs of property (3) relied on the regularity of the pluricomplex Green function $G_w$ at $\xi$, which is not known in the convex finite type case. Instead, our approach provides a metric proof of the existence of the normal derivative of $G_w$ in $\xi$. Combining (2) and (3) yields Abate-Venturini's formula \eqref{abate-venturini-intro} in this setting.

The function $\Omega_\xi(z)$ goes to 0 as $z\to \eta\in\partial D\backslash\{\xi\}$ thanks to the Poisson-horofunction formula, together with  the Gromov hyperbolicity of $D$ which implies that a horosphere touches the boundary only at its center.
Again the Poisson-horofunction formula shows that $\Omega$ is harmonic on every complex geodesic with endpoint $\xi$, and thus $(dd^c \Omega_\xi)^n=0$.

Statement (4) is based on Demailly's theory  \cite{Demailly}, but since we do not know the regularity of $G_w$, we can not follow the same argument as in \cite{BPT}. Instead, we prove a surprising asymptotic estimate of the Kobayashi distance in terms of the pluricomplex Poisson kernel.  
Let $\delta_D(z):=\min\{\|z-\eta\|:\eta\in\partial D\}$ denote the Euclidean distance of $w$ from to the boundary $\partial D$. Recall that if $D\subset\C^n$ is a bounded convex domain with $C^2$-smooth boundary, then for all $w\in D$ there exist $C_1\leq C_2$ such that for all $z\in D$
 $$-\log\delta_D(z)+C_1\leq k_D(z,w)\leq -\log \delta_D(z)+C_2.$$
In this paper we prove the following very precise estimate:
 
  \begin{theorem}\label{main 2}
Let $L\geq 2$.  Let $D\subset \C^n$ be a bounded convex domain of finite line type $L$ with $C^L$-smooth boundary.
 Let $w\in D$, $\xi\in \partial D$, then	
	$$\lim_{z\to\xi}[k_D(z,w)+\log\delta_D(z)]=-\log\frac{|\Omega_\xi(w)|}{2}.$$
 \end{theorem}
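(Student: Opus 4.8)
The plan is to obtain the limit as the common value of a matching upper and lower estimate, both produced by complex geodesics ending at $\xi$; the precise constant will be identified by means of the strong asymptoticity of geodesics from \cite{Arosio-Fiacchi,AFGG}, and the hard analytic input is a sharp description of $\delta_D$ along such geodesics, uniform in the geodesic. (The target value $-\log\frac{|\Omega_\xi(w)|}{2}$ is exactly what the model computation in the disc gives: for $\xi=1$ one has $1-\bigl|\tfrac{z-w}{1-\bar w z}\bigr|=\delta_\D(z)\,\tfrac{1-|w|^2}{|1-\bar w z|^2}(1+o(1))$ and $\Omega_1^\D(w)=-\tfrac{1-|w|^2}{|1-w|^2}$.) As a first reduction, by Theorem \ref{main}(2) and the definition of the horofunction, for every sequence $z_k\to\xi$ in $D$ and all $w,p\in D$,
\[
\bigl(k_D(z_k,w)+\log\delta_D(z_k)\bigr)-\bigl(k_D(z_k,p)+\log\delta_D(z_k)\bigr)=k_D(w,z_k)-k_D(p,z_k)\longrightarrow\log|\Omega_\xi(p)|-\log|\Omega_\xi(w)|;
\]
since $-\log\frac{|\Omega_\xi(p)|}{2}+\log|\Omega_\xi(p)|-\log|\Omega_\xi(w)|=-\log\frac{|\Omega_\xi(w)|}{2}$, it suffices to treat one fixed $p$, which we may take on the inner normal segment to $\partial D$ at $\xi$. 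Fix a complex geodesic $\varphi\colon\D\to D$ with $\varphi(0)=p$ and $\varphi(t)\to\xi$ as $(0,1)\ni t\to1$; recall from \cite{Arosio-Fiacchi,AFGG} that $\varphi'_N(1):=\angle\lim_{\zeta\to1}\langle\varphi'(\zeta),n_\xi\rangle\in(0,\infty)$ exists and $\Omega_\xi(p)=-1/\varphi'_N(1)$, so the target is $\log\bigl(2\varphi'_N(1)\bigr)$.

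The analytic core is the estimate $\delta_D(\varphi(t))=\varphi'_N(1)\,(1-t)\,(1+o(1))$ as $t\to1^-$, valid uniformly along all complex geodesics with endpoint close to $\xi$. The upper bound is immediate: as $D$ lies in the real supporting half-space at $\xi$,
\[
\delta_D(\varphi(t))\le\Re\langle\xi-\varphi(t),n_\xi\rangle=\int_t^1\Re\langle\varphi'(s),n_\xi\rangle\,ds=\varphi'_N(1)(1-t)(1+o(1)).
\]
For the lower bound, comparing $D$ with the supporting half-space at the nearest boundary point $\pi(z)$ gives the Kobayashi–metric inequality $\kappa_D(z;v)\ge\frac{|\langle v,n_{\pi(z)}\rangle|}{\delta_D(z)}$, while the geodesic equation gives $\kappa_D(\varphi(t);\varphi'(t))=\kappa_\D(t;1)=\frac{2}{1-t^2}$; hence $\delta_D(\varphi(t))\ge\frac{1-t^2}{2}\bigl|\langle\varphi'(t),n_{\pi(\varphi(t))}\rangle\bigr|=(1-t)\bigl|\langle\varphi'(t),n_{\pi(\varphi(t))}\rangle\bigr|(1+o(1))$, so the lower bound reduces to proving $\langle\varphi'(t),n_{\pi(\varphi(t))}\rangle\to\varphi'_N(1)$, i.e., to controlling $\langle\varphi'(t),n_{\pi(\varphi(t))}-n_\xi\rangle$. \emph{This is the main obstacle:} if $\varphi$ lands at $\xi$ complex-tangentially then $\|\varphi'(t)\|\to\infty$, and the crude bound $\|n_{\pi(\varphi(t))}-n_\xi\|=O(\|\pi(\varphi(t))-\xi\|)$ is useless. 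One must use the anisotropic scaling technique for convex finite type domains (\cite{Gauss,GausZim,Zim}), which compares $D$ near $\partial D$ with a convex model of finite type in a scale-invariant way, together with the strong asymptoticity of complex geodesics to the inner normal (\cite{Arosio-Fiacchi,AFGG}), which forces the tangential contributions to vanish in the limit; this is also where uniformity of the core estimate, as the geodesic varies near $\varphi$, is obtained. It is exactly here that the new estimate of the Kobayashi distance near boundary points, announced in the abstract, is proved and used.

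Granting the uniform core estimate, both inequalities of Theorem \ref{main 2} follow from the geodesic joining $p$ to $z$. Given $z\to\xi$, let $\psi_z\colon\D\to D$ be a complex geodesic with $\psi_z(0)=p$, $\psi_z(s_z)=z$ for some $s_z\in(0,1)$, and endpoint $\eta_z$; by Gromov hyperbolicity of $(D,k_D)$ (\cite{Zim}) the segments $[p,z]$ converge, as $z\to\xi$, to the geodesic ray from $p$ landing at $\xi$, so $\eta_z\to\xi$, $s_z\to1$ and $k_D(z,p)=k_\D(s_z,0)=\log\frac{1+s_z}{1-s_z}$. Applying the (easy half of the) core estimate to $\psi_z$ at the endpoint $\eta_z$ for the upper bound, its (difficult) lower half for the lower bound, and using the continuity of $\eta\mapsto\Omega_\eta(p)$ at $\xi$ (again a consequence of the scaling and strong asymptoticity), one gets $\delta_D(z)=\delta_D(\psi_z(s_z))=\frac{(1-s_z)(1+o(1))}{|\Omega_{\eta_z}(p)|}$ with $|\Omega_{\eta_z}(p)|\to|\Omega_\xi(p)|$; therefore
\[
k_D(z,p)+\log\delta_D(z)=\log(1+s_z)+\log\frac{\delta_D(z)}{1-s_z}\longrightarrow\log 2-\log|\Omega_\xi(p)|=\log\bigl(2\varphi'_N(1)\bigr)=-\log\frac{|\Omega_\xi(p)|}{2}.
\]
By the reduction in the first paragraph, the same limit with $p$ replaced by $w$ equals $-\log\frac{|\Omega_\xi(w)|}{2}$, which proves the theorem. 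In summary: the upper bound requires only the supporting-half-space estimate, applied uniformly; the genuine difficulty is the uniform lower bound for $\delta_D$ along complex geodesics, where the tangential behaviour must be controlled through the scaling method and strong asymptoticity.
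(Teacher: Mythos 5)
Your reduction to a single base point via the horofunction, your identification of the target value $\log\bigl(2\varphi'_N(1)\bigr)$, and your upper bound (supporting half-space at $\xi$ together with the angular derivative of the normal component, essentially the paper's Lemma \ref{lemmaexclaim}) are sound. The gap is the lower bound. You reduce it to the two-sided estimate $\delta_D(\varphi(t))\ge \varphi'_N(1)(1-t)(1+o(1))$, uniform over complex geodesics with endpoint near $\xi$, i.e.\ to showing $\langle\varphi'(t),n_{\pi(\varphi(t))}\rangle\to\varphi'_N(1)$ even when $\varphi$ lands tangentially and $\|\varphi'(t)\|\to\infty$. You correctly flag this as the main obstacle, but you never prove it: you only name ``scaling'' and ``strong asymptoticity'', and your closing appeal to ``the new estimate of the Kobayashi distance near boundary points announced in the abstract'' is circular, since that estimate \emph{is} Theorem \ref{main 2}. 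In the paper the logical order is the reverse of yours: the sharp two-sided behaviour of $\delta_D$ along geodesics (Theorem \ref{deltaratio}) is \emph{deduced from} Theorem \ref{Kobestimate}, as the remark following Lemma \ref{lemmaexclaim} states explicitly; only the easy one-sided $\limsup$ is used as input. The same unproved uniformity also infects your final step, where you apply the ``core estimate'' to the varying geodesics $\psi_z$ at the parameter $s_z$ and invoke continuity of $\eta\mapsto\Omega_\eta(p)$, again ``by scaling and strong asymptoticity'' without an argument; this continuity is Proposition \ref{contpole} in the paper and requires a proof (Julia's lemma plus a normal-families argument, Lemma \ref{liminfdila}, together with the boundary behaviour of left inverses of geodesics).

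The paper's lower bound avoids geodesics landing at nearby boundary points altogether: for $z_n\to\xi$ it sets $\xi_n=\pi(z_n)$, uses the normal-direction estimate $\lim_{t\to0^+}[k_D(\xi_n-tn_{\xi_n},p)+\log t]=-\log\frac{|\Omega_{\xi_n}(p)|}{2}$ (Remark \ref{kobestnormal}, a byproduct of Theorem \ref{derivataGreen}, which rests on the strong asymptoticity of a complex geodesic to the \emph{inner normal segment}, Theorem \ref{strgasysegm}), moves from $z_n$ to a point of the normal segment at $\xi_n$ at a cost controlled by the elementary internal-ball Lemma \ref{lemmaestimate} (possible because the two points have the same projection $\pi$), and concludes with $\Omega_{\xi_n}(p)\to\Omega_\xi(p)$ from Proposition \ref{contpole}; the upper bound for general sequences is likewise reduced to $K'$-convergent ones via $\pi$ and Lemma \ref{lemmaestimate}, with no uniformity over families of geodesics ever needed. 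To salvage your route you would have to prove the uniform lower estimate for $\delta_D$ along complex geodesics without using Theorem \ref{main 2}; as written, the decisive step is missing.
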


\subsection{Acknowledgement}

We sincerely thank the anonymous referees for their helpful comments and suggestions which significantly improved the original manuscript.

\section{Preliminaries}\label{preliminaries}

If $D\subset \C^n$ is a domain, we denote by $\overline D^*$ ({\sl respectively} $\partial^*D$) the closure ({\sl resp.}, boundary) of $D$ in the one-point compactification of $\C^n$.

We use the following normalization (see e.g. Kobayashi \cite{kobayascione} and \cite{AFGG, Arosio-Fiacchi}) for the
	Poincar\'e   distance in the disc $\D\subset \C$:
	\begin{equation}\label{normalization}k_\D(0,z)=\log\frac{1+|z|}{1-|z|}= 2 \,{\rm arctanh}(|z|).
	\end{equation}
	Another common normalization of the Poincar\'e   distance in the disc is obtained  dividing \eqref{normalization} by 2. This results in a different normalization of the Kobayashi distance which is used {\sl e.g.} in \cite{BP, BPT, BST}.

We recall that a convex domain is said to be $\C$-proper if it does not contain any complex lines. This is equivalent to the Kobayashi completeness of the space (see \cite{Harris, Barth, Bracci-Saracco}).

\begin{definition}[Locally finite type]
		Let $D\subseteq \C^n$ be a   $\C$-proper convex domain.   Let $L\geq 2$,  and assume that $\partial D$ is of class $C^L$ in a neighborhood of a boundary point   $\xi\in\partial D$. Let
		$r$ be a defining function of class $C^L$ for $\partial D$ in a neighborhood of $\xi$.
		The point $\xi$ has {\it   type} $L$ if the maximum, as $v\in\C^n\setminus\{0\}$, of the order of vanishing of the function
	$\zeta \mapsto r(\xi+\zeta v)$ is $L$.		Notice that $L$ is necessarily an even number.
		 The convex domain $D$ has {\it finite  type} if there exists  $L\geq 2$ such that the boundary $\partial D$ is of class $C^L$ and every point has  type at most $L$.
		Finally, we say that a point $\xi\in \partial D$ has {\it locally  finite   type} if there exists   $L\geq 2$ and a neighborhood $U$ of $\xi$ such that $\partial D\cap U$ is of class $C^L$ and every point in $\partial D\cap U$ has line type at most $L$.
\end{definition}

The following definition is due to Sadullaev \cite{Sad}.
\begin{definition}[Maximal plurisubharmonic functions]
Let $D\subseteq\C^n$ be a domain. 
A plurisubharmonic function $u\colon D\to \R$ is called \textit{maximal} if for all open set $V\subset\subset D$ and for all plurisubharmonic function $v\colon V\to \R$  such that $\limsup_{z\to p}v(z)\leq u(p)$ for all $p\in\partial V$, it holds $v(z)\leq u(z)$ for all $z\in V$.
\end{definition}

By Bedford--Taylor the complex Monge-Amp\`ere operator $(dd^c)^n$ can be defined for every plurisubharmonic function $u$ which is in $L^\infty_{loc}(D)$.
\begin{theorem}[Bedford--Taylor \cite{BedTay}]
Let $D\subseteq\C^n$ be a domain. Let $u\in\mbox{Psh}(D)\cap L^\infty_{loc}(D)$.
 Then $u$ is maximal if and only if  $(dd^cu)^n=0$.
\end{theorem}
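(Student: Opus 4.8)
The plan is to establish both implications of the equivalence ``$u$ maximal $\iff (dd^cu)^n=0$'' for $u\in\psh(D)\cap L^\infty_{\mathrm{loc}}(D)$, following the Bedford--Taylor line of argument. Since $u\in L^\infty_{\mathrm{loc}}(D)$, the Monge--Amp\`ere measure $(dd^cu)^n$ is a well-defined non-negative Borel measure on $D$ (this is exactly the content of Bedford--Taylor's construction, which we assume), and the comparison principle for such measures is available. The key analytic input will be the approximation of $u$ from above by a decreasing sequence of smooth plurisubharmonic functions on relatively compact subsets, together with the weak continuity of $(dd^c\cdot)^n$ along monotone sequences.

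For the implication $(dd^cu)^n=0 \Rightarrow u$ maximal: fix $V\subset\subset D$ open and $v\in\psh(V)$ with $\limsup_{z\to p}v(z)\le u(p)$ for every $p\in\partial V$. I would show $v\le u$ on $V$ by a contradiction argument using the comparison principle. Suppose the open set $W=\{z\in V : v(z)>u(z)\}$ is non-empty; on $W$ one has $v=u$ on $\partial W$ (in the appropriate boundary-value sense, using the hypothesis on $\partial V$ to handle the part of $\partial W$ meeting $\partial V$). Apply the Bedford--Taylor comparison principle on $W$ to the pair $u$ (playing the role with larger Monge--Amp\`ere mass) and $v$: since $(dd^cu)^n=0\le(dd^cv)^n$ on $W$ and $u\le v$ on $\partial W$, the comparison principle forces $\int_{\{v<u\}\cap W}(dd^cv)^n\le \int_{\{v<u\}\cap W}(dd^cu)^n=0$, and more to the point it yields $v\le u$ on $W$, contradicting the definition of $W$. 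Hence $W=\emptyset$ and $u$ is maximal. (One must be slightly careful that the comparison principle as usually stated requires bounded functions and a domain on which boundary values are controlled; the relative compactness of $V$ and the local boundedness of $u$ handle this.)

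For the converse $u$ maximal $\Rightarrow (dd^cu)^n=0$: I would argue by contradiction, supposing $(dd^cu)^n\ne 0$, so there is a ball $B\subset\subset D$ with $\int_B(dd^cu)^n>0$. The idea is to solve the Dirichlet problem for the homogeneous complex Monge--Amp\`ere equation on a slightly smaller ball $B'\subset\subset B$ with boundary data $u|_{\partial B'}$: by Bedford--Taylor (Perron method) there is $\tilde u\in\psh(B')\cap C(\overline{B'})$ with $(dd^c\tilde u)^n=0$ on $B'$ and $\tilde u=u$ on $\partial B'$, and by the maximum principle $\tilde u\ge u$ on $B'$. If $\tilde u\equiv u$ on $B'$ then $(dd^cu)^n=0$ on $B'$; shrinking and covering $B$ by such balls would give $(dd^cu)^n=0$ on $B$, contradicting the choice of $B$. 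Hence $\tilde u(z_0)>u(z_0)$ for some $z_0\in B'$. Now extend $v:=\tilde u$ to $V:=B'$ as a competitor in the definition of maximality: $v\in\psh(V)$ and $\limsup_{z\to p}v(z)=u(p)$ for $p\in\partial V$, yet $v(z_0)>u(z_0)$, contradicting the maximality of $u$. Therefore $(dd^cu)^n=0$.

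The main obstacle is the careful justification of the comparison principle and the solvability of the Dirichlet problem for $(dd^c\cdot)^n=0$ with continuous data at the regularity level $u\in L^\infty_{\mathrm{loc}}\cap\psh$ — in particular the fact that the Perron envelope is plurisubharmonic, continuous up to the boundary of a ball, and actually annihilates the Monge--Amp\`ere operator. These are precisely the foundational results of Bedford--Taylor \cite{BedTay}, so in the present context I would cite them and concentrate the written proof on the two contradiction arguments above, which are short once those tools are in hand.
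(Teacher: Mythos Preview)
The paper does not give its own proof of this statement: it is quoted as a known theorem of Bedford and Taylor, attributed to \cite{BedTay}, and no argument is provided. So there is nothing in the paper to compare your proposal against; the authors simply cite the result.

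That said, your outline follows the classical Bedford--Taylor strategy and is essentially sound, but two technical points deserve tightening. In the direction $(dd^cu)^n=0\Rightarrow$ maximal, the comparison principle applied on $W=\{v>u\}$ only yields $\int_W(dd^cv)^n\le\int_W(dd^cu)^n=0$, which does not by itself force $W=\emptyset$; the standard fix is to replace $v$ by $v+\varepsilon(|z|^2-M)$ so that the competitor has strictly positive Monge--Amp\`ere mass, forcing the (open) set $\{v+\varepsilon(|z|^2-M)>u\}$ to be empty, and then let $\varepsilon\to0$. In the converse direction, solving the Dirichlet problem on a ball $B'$ with boundary data $u|_{\partial B'}$ requires $u$ to be continuous on $\partial B'$, which is not guaranteed for an arbitrary locally bounded plurisubharmonic function; one typically works instead with a decreasing sequence of continuous (or smooth) psh approximants $u_j\searrow u$, solves the Dirichlet problem with data $u_j|_{\partial B'}$, and passes to the limit using the weak continuity of $(dd^c\cdot)^n$ along monotone sequences. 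With these adjustments your sketch becomes a correct proof.
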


We will need the following sufficient condition for maximality. Let $\D\subset \C$ denote the unit disc.
\begin{proposition}\label{maximaldisk}\cite[Proposition 5.1.4]{BracciTrapani}
Let $D\subset\C^n$ be a domain and let $u\colon D\to\R$ be a plurisubharmonic function.  Assume that for all $z\in D$ there exists a proper holomorphic map $\varphi\colon \D\to D$ with $z\in\varphi(\D)$ such that $u\circ\varphi$ is harmonic on $\D$, then $u$ is maximal.
\end{proposition}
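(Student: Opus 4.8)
The plan is to verify the definition of maximality directly. Fix an open set $V\Subset D$ and a plurisubharmonic function $v\colon V\to\R$ satisfying $\limsup_{z\to p}v(z)\le u(p)$ for every $p\in\partial V$; I must show $v\le u$ on $V$. Pick an arbitrary $z\in V$ and, using the hypothesis, choose a proper holomorphic map $\varphi\colon\D\to D$ with $\varphi(\zeta_0)=z$ for some $\zeta_0\in\D$ and with $u\circ\varphi$ harmonic on $\D$.

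First I would analyze the set $\varphi^{-1}(V)$. Since $V\Subset D$, its closure $\overline V$ is compact in $D$, and properness of $\varphi$ forces $\varphi^{-1}(\overline V)$ to be compact in $\D$; hence $\varphi^{-1}(V)$ is open with $\overline{\varphi^{-1}(V)}\subseteq\varphi^{-1}(\overline V)\Subset\D$. Let $W$ be the connected component of $\varphi^{-1}(V)$ containing $\zeta_0$: it is open, relatively compact in $\D$, and, a connected component of an open set being open, no point of $\partial W$ belongs to $\varphi^{-1}(V)$. Therefore, for every $\zeta_\ast\in\partial W$ one has $\varphi(\zeta_\ast)\in\overline V\setminus V=\partial V$, the membership in $\overline V$ coming from approximating $\zeta_\ast$ by points of $W\subseteq\varphi^{-1}(V)$.

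Next, on $W$ set $h:=v\circ\varphi-u\circ\varphi$. Since $\varphi$ is holomorphic and $v$ is plurisubharmonic, $v\circ\varphi$ is subharmonic on $W$, and since $u\circ\varphi$ is harmonic, hence continuous, $h$ is subharmonic on $W$. For each $\zeta_\ast\in\partial W$, using the continuity of $\varphi$ and of $u\circ\varphi$ together with the boundary hypothesis on $v$ at the point $\varphi(\zeta_\ast)\in\partial V$, I obtain
\[
\limsup_{\zeta\to\zeta_\ast}h(\zeta)\;\le\;\Big(\limsup_{x\to\varphi(\zeta_\ast),\ x\in V}v(x)\Big)-u(\varphi(\zeta_\ast))\;\le\;u(\varphi(\zeta_\ast))-u(\varphi(\zeta_\ast))\;=\;0.
\]
As $W\Subset\D$ is bounded, the classical maximum principle for subharmonic functions gives $h\le 0$ on $W$; evaluating at $\zeta_0$ yields $v(z)-u(z)=h(\zeta_0)\le 0$. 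Since $z\in V$ was arbitrary, $v\le u$ on $V$, i.e.\ $u$ is maximal.

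The main point to get right is topological and is exactly what is handled in the second paragraph: properness of $\varphi$ must be used to guarantee that $\varphi^{-1}(V)$ is relatively compact in $\D$, so that $\partial W$ lies in the \emph{interior} of the disc — where the boundary behaviour of the Poincaré metric plays no role — and is carried by $\varphi$ into $\partial V$, which is what lets the boundary estimate for $v$ be invoked. The remaining ingredients, namely that a plurisubharmonic function precomposed with a holomorphic map is plurisubharmonic, the continuity of harmonic functions, and the maximum principle on a bounded planar domain, are standard.
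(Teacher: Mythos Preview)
Your argument is correct. The paper does not supply its own proof of this proposition: it is quoted verbatim from \cite[Proposition~5.1.4]{BracciTrapani} and used as a black box, so there is nothing in the present paper to compare against. That said, your proof is exactly the standard one and is presumably what appears in the cited reference: pull the competitor $v$ back along the proper disc $\varphi$, use properness to ensure the relevant component $W\subset\varphi^{-1}(V)$ is relatively compact in $\D$ with $\varphi(\partial W)\subset\partial V$, and then apply the one-variable maximum principle to the subharmonic function $v\circ\varphi-u\circ\varphi$. All the steps are sound, including the topological point you flag about $\partial W$ lying inside $\D$ and mapping into $\partial V$.
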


\section{Consequences of strong asymptoticity}
In this section we recall definitions and results from \cite{Arosio-Fiacchi} which will be needed in what follows. 
\subsection{Complex geodesics}
\begin{definition}[Complex geodesics]
Let $D\subset\C^n$ be a $\C$-proper convex domain. A \textit{complex geodesic} is a holomorphic map $\varphi\colon\D\to D$ such that
$$k_D(\varphi(\zeta_1),\varphi(\zeta_2))=k_\D(\zeta_1,\zeta_2)$$
for all $\zeta_1,\zeta_2\in \D$. 

\end{definition}
\begin{remark}\label{remarkgeodesic}
Let $D\subset\C^n$ be a $\C$-proper convex domain.
\begin{enumerate}
\item Every complex geodesic $\varphi:\D\to D$ admits a \textit{holomorphic left inverse} $\tilde\rho\colon D\to\D$ satisfying $\tilde\rho\circ\varphi=\id_\D$ (see e.g. \cite{JP}).
\item For all $z,w\in D$ there exists a complex geodesic $\varphi\colon\D\to D$ such that $z,w\in\varphi(\D)$. 
In particular, $c_D=k_D$, where $c_D$ is the Carathéodory distance.
\item Let  $\xi\in\partial D$ be a  point of locally finite  type. 
It is proved in \cite[Proposition 4.3]{Arosio-Fiacchi} that for any point $z\in D$ there exists a complex geodesic with $\varphi(0)=z$ and such that 
$\lim_{\R\ni t\to 1^-}\varphi(t)=\xi$. We say that  $\xi$ is the \textit{endpoint} of $\varphi$, and we denote $\xi=\varphi(1).$
 It is not known whether such complex geodesic is unique (uniqueness is known if $D$ is bounded and strongly convex with $C^{3}$-smooth boundary, see \cite[Theorem 2.6.45]{Abatebook}).
\end{enumerate}

\end{remark}

If $D\subset \C^n$ is a bounded strongly convex domain with $C^{r,\alpha}$-smooth boundary, where $r\geq 2$ and $\alpha\in (0,1)$, then (see e.g. \cite{HuWa}) any complex geodesics $\varphi\colon \D\to D$ extends $C^{r-1,\alpha}$-smooth to $\partial \D$.
By the classical Hopf's Lemma one has ${\rm Re}\langle\varphi'(1),n_\xi\rangle>0$ (see e.g. \cite[Remark 1.2]{HuWa}), and in particular the geodesic $\varphi$ approaches $\xi$ non-tangentially. Moreover since $\varphi$ is proper one has $i\varphi'(1)\in T_\xi\partial D$, concluding  that   $\langle\varphi'(1),n_\xi\rangle>0.$

Assume now that the boundary of $D$ is $C^\infty$-smooth.
Then it follows from  \cite[Eq.~(1.2)]{BPT} that 
\begin{equation}\label{ponte}\Omega_\xi(\varphi(0))=-\frac{1}{\langle\varphi'(1),n_\xi\rangle}.
\end{equation} The same result holds if the boundary is $C^3$-smooth thanks to \cite[Remark 4.4]{HuWa}.
 For a $\C$-proper convex domain $D\subset\C^n$  with a  point $\xi\in\partial D$  of locally finite  type the situation is quite different.
  
  	\begin{example}[Complex geodesics of the egg domain]\label{egggeodesics}
  	Let $m\geq 2$ be an even integer and define the {\it egg domain} 
  	\begin{equation}\label{eggdomain}
  		\mathbb{E}_m:=\{(z_0,z_1)\in\C^2:|z_0|^2+|z_1|^{m}<1\}\subset \C^2.
  	\end{equation}
  	The point $\xi:=(1,0)$ is a boundary point of type $m$.
  	For all $a\in\C$  the map $\varphi_a\colon \D\to \mathbb{E}_m$ given by 
  	\begin{equation}\label{formulageodetiche}
  		\varphi_a(\zeta):=\left(\frac{\zeta+|a|^{m}}{1+|a|^{m}},a\left(\frac{1-\zeta}{1+|a|^{m}}\right)^{2/m}\right)
  	\end{equation}
 is a  complex geodesic with endpoint $\xi$ (see \cite{Huang} and \cite{JPZ}). 
  	Notice that if $m>2$ and $a\neq0$ then $\lim_{t\to 1}\|\varphi_a'(t)\|=+\infty$ and $\varphi_a(t)$ converges to $\xi$ tangentially.
  \end{example}
  However, it is proved in \cite{Arosio-Fiacchi, AFGG} that the following non-tangential limit exists:
\begin{equation}\label{varphi'_N(1)}
\varphi'_N(1):=\angle \lim_{z\to 1}\langle \varphi'(z),n_\xi\rangle>0.
\end{equation}

\subsection{Strong asymptoticity}
A fundamental property at a boundary point $\xi$ of locally finite type of a $\C$-proper convex domain is the strong asymptoticity of complex geodesics endpoint $\xi$, a notion first introduced in \cite{AFGG} (see also \cite{approaching,Arosio-Fiacchi} for further developments).

If $\varphi$ is a complex geodesic with endpoint $\xi$, then by $\tilde \varphi\colon \R_{\geq 0}\to D$ we denote the real geodesic ray $\tilde \varphi=\varphi\circ \gamma_\D$, where
$\gamma_\D\colon \R_{\geq 0}\to \D$ is  the geodesic ray in $\D$ with starting point $0$ and endpoint $1$, that is 
$\gamma_\D(t)={\rm \tanh} (t/2).$
\begin{definition}[Strong asymptoticity]
			Two complex geodesics  $\varphi, \psi\colon \D\to D$ with the same endpoint $\xi$ are {\it strongly asymptotic}
			if there exists
			$T\in \R$  such that
			$$\displaystyle \lim_{t\to +\infty}k_D(\tilde \varphi(t), \tilde\psi(t+T))=0.$$	
\end{definition}

\begin{theorem}[\cite{Arosio-Fiacchi}]\label{strgasy}
Let $D\subset\C^n$ be a $\C$-proper convex domain, and let $\xi\in\partial D$ be a point of locally finite type.   
 Let $\varphi,\psi\colon\D\to D$ be two complex geodesics  with the same endpoint $\xi$.
 Then 
 $$\lim_{t\to +\infty}k_D\left(\tilde \varphi(t), \tilde\psi\left(t+\log\frac{\psi'_N(1)}{\varphi_N'(1)}\right)\right)=0.$$
 In particular any   two complex geodesics  with the same endpoint $\xi$ are strongly asymptotic.
\end{theorem}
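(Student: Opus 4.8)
The plan is to reduce the statement to the one-dimensional case by pushing forward to the disc along a holomorphic left inverse, and then to upgrade a crude asymptotic into the sharp constant $\log\frac{\psi'_N(1)}{\varphi'_N(1)}$ using the geometry of horocycles in $\D$. Fix two complex geodesics $\varphi,\psi\colon\D\to D$ with common endpoint $\xi$, and let $\tilde\rho\colon D\to\D$ be a holomorphic left inverse of $\varphi$, so $\tilde\rho\circ\varphi=\id_\D$. By the distance-decreasing property of holomorphic maps,
\[
k_D(\tilde\varphi(t),\tilde\psi(t+T))\ \geq\ k_\D(\tilde\rho(\tilde\varphi(t)),\tilde\rho(\tilde\psi(t+T)))\ =\ k_\D\bigl(\gamma_\D(t),\,\tilde\rho(\tilde\psi(t+T))\bigr),
\]
so I first need to understand the curve $t\mapsto\tilde\rho(\tilde\psi(t))$ in $\D$. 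The key soft input is that $\tilde\rho\circ\psi\colon\D\to\D$ is a holomorphic self-map of the disc; I will argue (using that $\psi$ has endpoint $\xi$, that $\varphi$ has endpoint $\xi$, and the behaviour of $k_D$ near $\xi$, i.e. the standard estimate $k_D\asymp-\log\delta_D$ recalled before Theorem~\ref{main 2}) that $\tilde\rho\circ\psi$ has a boundary fixed point at $1$ and that $(\tilde\rho\circ\psi)(t)\to1$ as $t\to1^-$. By the classical Julia–Wolff–Carathéodory theorem, $\tilde\rho\circ\psi$ then has a finite positive angular derivative $\lambda:=\angle\lim_{\zeta\to1}\frac{1-(\tilde\rho\circ\psi)(\zeta)}{1-\zeta}$ at $1$, and the Julia–Wolff lemma gives the sharp comparison of horocycles: $\tilde\rho\circ\psi$ maps the horocycle $E(1,R)$ into $E(1,\lambda R)$. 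Translating this into the Kobayashi metric of $\D$ yields $k_\D(\gamma_\D(t),(\tilde\rho\circ\psi)(\gamma_\D(s)))\to 0$ precisely when $s-t\to-\log\lambda$, which is the one-dimensional version of the theorem with $T=-\log\lambda$.

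The heart of the matter is therefore to identify the constant: I must show $\lambda=\dfrac{\varphi'_N(1)}{\psi'_N(1)}$, equivalently $-\log\lambda=\log\dfrac{\psi'_N(1)}{\varphi'_N(1)}$. For this I would compute $\lambda$ using the chain rule in the non-tangential sense. Writing $\lambda$ as an angular limit along the real axis, $\lambda=\angle\lim_{\zeta\to1}\frac{1-\tilde\rho(\psi(\zeta))}{1-\zeta}$, I expand $\tilde\rho$ to first order at $\xi$: since $\tilde\rho$ is holomorphic on $D$ and $\tilde\rho(\varphi(\zeta))=\zeta$, differentiating at $\zeta=1$ along the normal direction gives $d\tilde\rho_\xi$ paired against $\varphi'_N(1)$-data, so that near $\xi$ one has $1-\tilde\rho(w)\approx \langle \nabla\tilde\rho(\xi),\,\xi-w\rangle$ with the normalization $\langle\nabla\tilde\rho(\xi),n_\xi\rangle\cdot\varphi'_N(1)=1$ (this is exactly what the identity $\tilde\rho\circ\varphi=\id$ forces on the normal components, using \eqref{varphi'_N(1)} and that the tangential part of $\varphi'(1)$, if it exists, is killed because $i\varphi'(1)$ is tangent). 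Feeding $w=\psi(\zeta)$ and using $\psi'_N(1)=\angle\lim_{\zeta\to1}\langle\psi'(\zeta),n_\xi\rangle$ together with $1-\psi(\zeta)\approx(1-\zeta)\psi'(\zeta)$ non-tangentially, the tangential components again drop out in the pairing with $\nabla\tilde\rho(\xi)$ (because $\langle\nabla\tilde\rho(\xi),\cdot\rangle$ vanishes on the complex tangent space, $\tilde\rho$ being constant $=1$ along $\partial D$ near $\xi$ to first order — more precisely, $|\tilde\rho|<1$ on $D$ and $=1$ at $\xi$ forces $\nabla\tilde\rho(\xi)$ to be a positive multiple of $n_\xi$ by Hopf). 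Hence $\lambda=\langle\nabla\tilde\rho(\xi),n_\xi\rangle\cdot\psi'_N(1)=\psi'_N(1)/\varphi'_N(1)$, as desired.

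With the one-dimensional bound $k_D(\tilde\varphi(t),\tilde\psi(t+T))\geq k_\D(\gamma_\D(t),(\tilde\rho\circ\psi)(\gamma_\D(t+T)))$ and the above choice $T=\log\frac{\psi'_N(1)}{\varphi'_N(1)}$, the right-hand side tends to $0$; but this only gives a lower bound going to $0$, which is vacuous. So the real work is the matching \emph{upper} bound $\limsup_{t\to+\infty}k_D(\tilde\varphi(t),\tilde\psi(t+T))\leq 0$, and I expect this to be the main obstacle. Here I would invoke the hypothesis of locally finite type directly rather than through the left inverse: I would use that $\varphi$ and $\psi$ both converge to $\xi$ and compare their images using the scaling/metric estimates available on convex finite-type domains — concretely, one needs that points $\varphi(t)$ and $\psi(s)$ that have the \emph{same} normalized position relative to $\xi$ (same "horocyclic height" $\delta_D\cdot(\text{const})$ and asymptotically the same projection to $\partial D$) are at Kobayashi distance $o(1)$. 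The symmetry $k_D=c_D$ (Remark~\ref{remarkgeodesic}(2)) lets me realize $k_D(\varphi(t),\psi(s))$ as $\sup$ over holomorphic maps to $\D$, reducing the upper bound to producing, for each $\varepsilon$, a single holomorphic left inverse that nearly achieves the distance — and the left inverse $\tilde\rho$ of $\varphi$ does this by part (1) of the argument, \emph{provided} I also control the tangential displacement between $\tilde\psi(t+T)$ and the fibre $\tilde\rho^{-1}(\gamma_\D(t))$. Controlling that tangential drift is exactly where strong asymptoticity is genuinely needed and where the finite-type hypothesis enters essentially; I would extract it from the $K'$-convergence estimates and the uniform comparison of the Kobayashi metric with the Euclidean geometry near $\xi$ established in \cite{Arosio-Fiacchi}. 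Once both inequalities are in place, the last sentence of the theorem ("any two complex geodesics with the same endpoint are strongly asymptotic") is immediate: $T=\log\frac{\psi'_N(1)}{\varphi'_N(1)}$ is finite since both $\varphi'_N(1),\psi'_N(1)$ are finite and positive by \eqref{varphi'_N(1)}.
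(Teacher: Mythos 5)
There is a genuine gap, and it sits exactly where you locate "the main obstacle." The substance of Theorem~\ref{strgasy} is the upper bound $\limsup_{t\to+\infty}k_D\bigl(\tilde\varphi(t),\tilde\psi(t+T)\bigr)\leq 0$; your part (1) only produces a lower bound by a quantity tending to $0$, which, as you concede, proves nothing. For the upper bound you reduce to the claim that points of $\varphi$ and $\psi$ with the same ``normalized position relative to $\xi$'' are at Kobayashi distance $o(1)$ and propose to extract this from ``the $K'$-convergence estimates \ldots established in \cite{Arosio-Fiacchi}.'' That is circular: Theorem~\ref{strgasy} is itself a result of \cite{Arosio-Fiacchi} (the present paper only quotes it), and in that reference the $K'$-theory is built \emph{on top of} strong asymptoticity, whose proof rests on the Gromov hyperbolicity of convex finite-type domains (Zimmer) and on scaling techniques. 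No soft argument with a single left inverse can close this gap, because the statement fails for general $\C$-proper convex domains: in the bidisc, $\zeta\mapsto(\zeta,\zeta)$ and $\zeta\mapsto(\zeta,\zeta^2)$ are complex geodesics with the same endpoint $(1,1)$ which are not strongly asymptotic for any shift $T$. So any correct proof must use the finite-type hypothesis substantively, and your sketch never does. A further flaw in the same step: since $c_D=k_D$ is a supremum over holomorphic maps to $\D$, exhibiting one map (the left inverse $\tilde\rho$) that ``nearly achieves'' the distance can only give lower bounds; it cannot yield the needed upper bound on $k_D(\tilde\varphi(t),\tilde\psi(t+T))$.

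The identification of the constant also leans on regularity that is unavailable here. You expand $\tilde\rho$ to first order at $\xi$, invoke $\nabla\tilde\rho(\xi)$, a Hopf-lemma argument, and the approximation $1-\psi(\zeta)\approx(1-\zeta)\psi'(\zeta)$; but in the finite-type setting no boundary smoothness of left inverses is known, geodesics can approach $\xi$ tangentially with $\|\psi'(t)\|\to+\infty$ (Example~\ref{egggeodesics}), and only the normal component of $\psi'$ has a non-tangential limit. Even the preliminary claim that $\tilde\rho\circ\psi$ has radial limit $1$ at $1$ already requires knowing that $\psi$ stays within bounded Kobayashi distance of $\varphi$ near $\xi$, i.e.\ a weak form of the theorem being proved. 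Note also that Proposition~\ref{horoderivatives}, which would hand you the constant directly, is deduced in this paper \emph{from} Theorem~\ref{strgasy}, so it cannot be used as input. In short: the asymptotic vanishing of the distance and the value of the shift both have to come out of the hyperbolicity/scaling analysis of \cite{Arosio-Fiacchi}, and your proposal does not supply a substitute for it.
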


\begin{theorem}\cite{Arosio-Fiacchi} \label{strgasysegm}
Let $D\subset\C^n$ be a $\C$-proper convex domain, and let $\xi\in\partial D$ be a point of locally finite type.   
Let $\varphi\colon\D\to D$ be a complex geodesic with $\varphi(1)=\xi$. Then  if $\sigma\colon[t_0,1)\to D$ denotes the inner normal segment 
	$\sigma(t)=\xi+(t-1)\varphi'_N(1)n_\xi$, 
	we have
	$$\lim_{t\to1^-}k_D(\varphi(t),\sigma(t))=0.$$
\end{theorem}
\subsection{Horofunctions and horospheres}
\begin{definition}[Horofunctions and horospheres]
Let $D\subset\C^n$ be a $\C$-proper convex domain, and let $\xi\in \partial D$ be a point of locally finite type. Let $p\in D$. The {\it horofunction 
$h_{\xi,p}\colon D\to \R$ with center $\xi$ and base-point $p$} is defined as $$h_{\xi,p}(z):=\lim_{w\to \xi} [k_D(z,w)-k_D(w,p)].$$
Such a limit exists  as a consequence of Theorem \ref{strgasy}. Since  the function $D\ni z\mapsto k_D(z,w)-k_D(w,p)$ is  $1$-Lipschitz w.r.t. $k_D$, by the Ascoli-Arzel\`a theorem  the convergence is  uniform on compact sets.
Moreover, since the function $D\times D\ni (z,p)\mapsto k_D(z,w)-k_D(w,p)$ is anti-symmetric, it follows that $h_{\xi,\cdot}(\cdot):D\times D\to \R$ is continuous.
Notice that this definition is coherent with the classical one in terms of the horofunction compactification, see e.g. \cite{BH}.
We say that a function $g\colon D\to \R$ is a {\it horofunction with center $\xi$} if there exists $p\in D$ such that $g=h_{\xi,p}.$ Notice that, since
$$h_{\xi,p}(z)=h_{\xi,q}(z)+h_{\xi,p} (q),$$ two horofunctions centered in $\xi$ differ by a constant.

The {\it horosphere} centered in $\xi$ with radius $R>0$ and base-point $p$ is the sublevel set
$$E_p(\xi,R):=\{z\in D\colon h_{\xi,p}(z)<\log R\}.$$
\end{definition}

\begin{remark}\label{Rem:horo-impl-cont}
A  function $g\colon D\to \R$ is a horofunction with center $\xi$ if  and only if
\[
h_{\xi,w}(z)=g(z)-g(w)
\] for all $z,w\in D$.
\end{remark}

\begin{proposition}\cite{Arosio-Fiacchi}\label{horoderivatives}
 Let $\varphi,\psi\colon\D\to D$ be two complex geodesics  with the same endpoint $\xi$.
 Then 
			\begin{equation}\label{formulaT2}
				h_{\xi,\varphi(0)}(\psi(0))=\log\frac{\psi'_N(1)}{\varphi_N'(1)}.
			\end{equation}
			In particular if $\varphi(0)=\psi(0)$ then $\varphi_N'(1)=\psi'_N(1).$
\end{proposition}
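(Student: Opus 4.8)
The plan is to evaluate the limit defining $h_{\xi,\varphi(0)}(\psi(0))$ along the geodesic ray $\tilde\varphi$ and to invoke the strong asymptoticity Theorem \ref{strgasy} in order to compare it with a suitably time-shifted copy of $\tilde\psi$.

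First I would note that the limit $h_{\xi,\varphi(0)}(\psi(0))=\lim_{w\to\xi}[k_D(\psi(0),w)-k_D(w,\varphi(0))]$ is already known to exist --- this is precisely the point for which Theorem \ref{strgasy} is invoked in the Definition of horofunctions --- so it may be computed along any chosen sequence $w_k\to\xi$. Since $\tilde\varphi(t)=\varphi(\tanh(t/2))\to\varphi(1)=\xi$ as $t\to+\infty$, I would take $w=\tilde\varphi(t)$ with $t\to+\infty$.

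Then the computation is short. Because $\varphi$ is a complex geodesic and $k_\D(0,\tanh(t/2))=t$ by the normalisation \eqref{normalization}, one has $k_D(\tilde\varphi(t),\varphi(0))=k_D(\tilde\varphi(t),\tilde\varphi(0))=t$. Writing $T:=\log\frac{\psi'_N(1)}{\varphi'_N(1)}$, Theorem \ref{strgasy} gives $k_D(\tilde\varphi(t),\tilde\psi(t+T))\to0$, so the triangle inequality yields
\[
\bigl|k_D(\psi(0),\tilde\varphi(t))-k_D(\psi(0),\tilde\psi(t+T))\bigr|\le k_D(\tilde\varphi(t),\tilde\psi(t+T))\longrightarrow 0 ,
\]
while $k_D(\psi(0),\tilde\psi(t+T))=k_D(\tilde\psi(0),\tilde\psi(t+T))=t+T$ for $t$ large enough that $t+T\ge 0$ (again by \eqref{normalization}). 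Hence $k_D(\psi(0),\tilde\varphi(t))=t+T+o(1)$, and subtracting $t$ and letting $t\to+\infty$ gives
\[
h_{\xi,\varphi(0)}(\psi(0))=T=\log\frac{\psi'_N(1)}{\varphi'_N(1)},
\]
which is \eqref{formulaT2}. Specialising to $\psi(0)=\varphi(0)$ makes the left-hand side equal to $h_{\xi,\varphi(0)}(\varphi(0))=0$, which forces $\varphi'_N(1)=\psi'_N(1)$ and proves the last assertion.

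I do not expect a serious obstacle here: all the difficulty is packed into Theorem \ref{strgasy}, and behind it into the strong asymptoticity estimates of \cite{Arosio-Fiacchi,AFGG} together with the non-tangential existence and positivity of the normal derivative $\varphi'_N(1)$ from \eqref{varphi'_N(1)}, all of which I am taking for granted. The only minor care needed is the interchange of the ``$w\to\xi$'' limit with the evaluation along the ray $\tilde\varphi$, which is legitimate precisely because the defining limit of the horofunction is already known to exist (and, by the Ascoli--Arzel\`a remark in the Definition, the convergence is even uniform on compacta).
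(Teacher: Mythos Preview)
Your proof is correct and follows essentially the same route as the paper: evaluate the horofunction along the ray $\tilde\varphi(t)$, use Theorem~\ref{strgasy} and the triangle inequality to replace $\tilde\varphi(t)$ by $\tilde\psi(t+T)$ up to $o(1)$, and then use that both $\varphi$ and $\psi$ are complex geodesics to compute the distances. The paper's write-up is more condensed (it performs the substitution in one line without making the triangle inequality explicit), but the argument is the same.
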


\proof
	By  Theorem \ref{strgasy},
\begin{align*}h_{\xi,\varphi(0)}(\psi(0))&=\lim_{t\to+\infty}k_D(\tilde\psi(0),\tilde \varphi(t))-k_D(\tilde\varphi(t),\tilde \varphi (0))\\
&=\lim_{t\to+\infty}k_D\left(\tilde \psi(0),\tilde\psi\left(t+\log\frac{\psi'_N(1)}{\varphi_N'(1)}\right)\right)-k_D(\tilde\varphi(t),\tilde \varphi(0))=\log\frac{\psi'_N(1)}{\varphi_N'(1)}.
\end{align*}
\endproof

A crucial geometric properties of horospheres centered at a point of locally finite type $\xi\in \partial D$ is that they touch the boundary only at $\xi$. 
\begin{proposition}\label{horoonepoint}\cite[Proposition 6.13]{Arosio-Fiacchi}
Let $D\subset\C^n$ be a $\C$-proper convex domain and $\xi\in\partial D$ be a point of locally finite type. Then, for all $R>0$
$$\overline{E_p(\xi,R)}^*\cap\partial^*D=\{\xi\},$$
where $\overline{E_p(\xi,R)}^*$ denotes the closure of $E_p(\xi,R)$ in the one-point compactification of $\C^n$.
\end{proposition}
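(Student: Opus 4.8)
The plan is to prove the two inclusions $\{\xi\}\subseteq\overline{E_p(\xi,R)}^*\cap\partial^*D$ and $\overline{E_p(\xi,R)}^*\cap\partial^*D\subseteq\{\xi\}$ separately. For the first one I would exhibit points of $E_p(\xi,R)$ converging to $\xi$: by Remark~\ref{remarkgeodesic}(3) choose a complex geodesic $\varphi\colon\D\to D$ with $\varphi(0)=p$ and $\varphi(1)=\xi$, and let $\tilde\varphi\colon[0,+\infty)\to D$ be its geodesic ray. Since the limit defining $h_{\xi,p}$ is independent of the way $w$ approaches $\xi$, I can evaluate it along $w=\tilde\varphi(s)\to\xi$; using $k_D(\tilde\varphi(t),\tilde\varphi(s))=|s-t|$ and $k_D(\tilde\varphi(s),p)=s$ (our normalization) this gives $h_{\xi,p}(\tilde\varphi(t))=\lim_{s\to+\infty}[(s-t)-s]=-t$. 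Hence $\tilde\varphi(t)\in E_p(\xi,R)$ for $t$ large, while $\tilde\varphi(t)\to\xi$, so $\xi\in\overline{E_p(\xi,R)}^*\cap\partial^*D$.

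For the second inclusion it suffices to show that $h_{\xi,p}(z)\to+\infty$ as $D\ni z\to\eta$, for every $\eta\in\partial^*D\setminus\{\xi\}$; then no sequence in $E_p(\xi,R)$ can converge to $\eta$. The first ingredient is a Julia-type inclusion. By Proposition~\ref{horoderivatives}, if $\psi$ is any complex geodesic with $\psi(0)=z$ and endpoint $\xi$, then $h_{\xi,p}(z)=\log\bigl(\psi'_N(1)/\varphi'_N(1)\bigr)$, so $z\in E_p(\xi,R)$ forces $\psi'_N(1)\le R\,\varphi'_N(1)$. The affine map $\pi(w):=\langle w-\xi,n_\xi\rangle$ sends $D$ into the half-plane $\{\Re<0\}$ (the supporting hyperplane at the $C^1$ point $\xi$), $\pi\circ\psi$ has boundary value $0$ at $1\in\partial\D$, and its angular derivative there equals $\psi'_N(1)$ by \eqref{varphi'_N(1)}; Julia's lemma then yields a constant $M=M(R)>0$ with $|\langle z-\xi,n_\xi\rangle|^2\le M\,|\Re\langle z-\xi,n_\xi\rangle|$ on $E_p(\xi,R)$. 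This already disposes of every $\eta$ with $\langle\eta-\xi,n_\xi\rangle\ne0$ and $\Re\langle\eta-\xi,n_\xi\rangle=0$, as well as $\eta=\infty$ reached with $\langle z-\xi,n_\xi\rangle$ unbounded. The remaining $\eta$ are those for which $\langle\eta-\xi,n_\xi\rangle$ sits in the bounded horodisc cut out by the inequality — namely $\eta\in\partial D$ with $\Re\langle\eta-\xi,n_\xi\rangle<0$, or $\eta\in\partial D$ on the complex tangent hyperplane $H_\xi=\{\langle\cdot-\xi,n_\xi\rangle=0\}$, or $\eta=\infty$ reached inside the slab $\{|\langle\cdot-\xi,n_\xi\rangle|\le M\}$ — and for these I would use a visibility/Gromov-product estimate at the pair $(\xi,\eta)$: for $z$ near $\eta$ and $w$ near $\xi$, $k_D(z,w)\ge k_D(z,p)+k_D(w,p)-C_{\xi,\eta}$, whence $h_{\xi,p}(z)=\lim_{w\to\xi}[k_D(z,w)-k_D(w,p)]\ge k_D(z,p)-C_{\xi,\eta}\to+\infty$ as $z\to\eta$, since $k_D$ is complete on a $\C$-proper convex domain. (Equivalently one can argue by contradiction: if $z_n\in E_p(\xi,R)$ with $z_n\to\eta$, then by Theorem~\ref{strgasysegm} the geodesics through the $z_n$ stay, for large parameter, uniformly close to inner-normal segments at $\xi$ of length $\le R\,\varphi'_N(1)$, so $k_D(z_n,\sigma_n)\le s+o(1)$ for points $\sigma_n$ near $\xi$ with $\delta_D(\sigma_n)\asymp e^{-s}$, and the product lower bound $k_D(z_n,\sigma_n)\gtrsim\log(1/\delta_D(z_n))+s-C$ then forces $\delta_D(z_n)$ bounded below, contradicting $z_n\to\partial^*D$.)

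I expect the main obstacle to be precisely this visibility/product lower bound at $(\xi,\eta)$. For bounded $D$ of finite type it follows from the Gromov hyperbolicity of $(D,k_D)$ of Zimmer together with the fact that $\xi$ and $\eta$ are distinct points of the Gromov boundary — which itself follows from the radial approach of geodesic rays in Theorem~\ref{strgasysegm} — but in the stated generality it must be extracted near the finite-type point $\xi$ from strong asymptoticity combined with the anisotropic lower bounds for the Kobayashi metric near a finite-type boundary point (via the scaling technique). The genuinely delicate case is a point $\eta\in\partial D$ lying on the complex tangent hyperplane $H_\xi$: there the outward normals at $\xi$ and $\eta$ coincide, the transversal Julia estimate degenerates, and one really uses that $\xi$ has finite type (not merely $C^L$-smooth boundary) to see that reaching $\eta$ from the direction of $\xi$ while staying in the horoball costs infinite Kobayashi length. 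The residual non-compact case $\eta=\infty$ requires an additional argument exploiting that the face $\overline D\cap H_\xi$ is itself a $\C$-proper convex set.
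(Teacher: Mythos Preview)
The paper does not prove this proposition: it is stated with the citation \cite[Proposition~6.13]{Arosio-Fiacchi} and no argument is supplied here, so there is no in-paper proof to compare your proposal against.

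As to the proposal itself: the inclusion $\{\xi\}\subseteq\overline{E_p(\xi,R)}^*\cap\partial^*D$ is handled correctly. For the reverse inclusion your outline points in the right direction---the result does ultimately rest on a visibility-type estimate at the pair $(\xi,\eta)$, and in the bounded finite-type case one can indeed extract it from Zimmer's Gromov hyperbolicity once distinct boundary points are known to give distinct points of the Gromov boundary. But you have not actually closed the argument: you explicitly flag the Gromov-product lower bound as ``the main obstacle'', you call the complex-tangential case $\eta\in H_\xi$ ``genuinely delicate'', and you leave $\eta=\infty$ as ``requiring an additional argument''. These are precisely the non-trivial parts. In the stated generality ($D$ merely $\C$-proper, finite type only \emph{locally} at $\xi$) global Gromov hyperbolicity is not available, so one needs a genuinely local argument near $\xi$; this is what the cited reference provides via the multitype/scaling description of horospheres. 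Your alternative route through Theorem~\ref{strgasysegm} plus a product lower bound is not self-contained as written: the two-point estimate $k_D(z_n,\sigma_n)\gtrsim\log(1/\delta_D(z_n))+s-C$ that you invoke is itself a visibility statement of the very kind you are trying to prove. So what you have is an honest outline that correctly locates the difficulties without resolving them.
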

{Note that  if the domain $D$ is bounded and of (global) finite type, then the previous result follows from the Gromov hyperbolicity of $D$ w.r.t. the Kobayashi distance established by Zimmer \cite{Zim}. }
\subsection{Generalizations of non-tangential limit}
$K$-limits and $K'$-limits are the natural generalizations of non-tangential limits to higher dimensions. We refer to \cite{Abatebook} for an introduction to these concept in several complex variables, and to \cite{AbTau,Arosio-Fiacchi} for the case of  convex  domains of finite type.
In this paper will be mostly interested in $K'$-limits, as $K$-limits will only be relevant in the study of the boundary behaviour of holomorphic maps between $\C$-proper convex domains of finite type (Section \ref{sectiondilation}).

\begin{definition}[$K$-convergence and $K'$-convergence]
Let $D\subset\C^n$ be a $\C$-proper convex domain and let $\xi\in\partial D$ be a point of locally finite type. Let $(z_n)$ be a sequence in $D$ converging to $\xi$. We say that $(z_n)$ \textit{$K$-converges} to $\xi$ if there exists $M>0$ such that $(z_n)$ is contained in the \textit{$K$-region} 
$$K_p(\xi,M):=\{z\in D\colon h_{\xi,p}(z)+k_D(z,p)<2\log M\}.$$
We say that $(z_n)$ \textit{$K'$-converges}  to $\xi$ if it $K$-converges to $\xi$ and if, given a complex geodesic $\varphi\colon\D\to D$ with endpoint $\xi$, there exists a sequence $(\zeta_n)$ in $\D$ converging  to $1\in\partial\D$ such that
$\lim_{n\to+\infty}k_D(z_n,\varphi(\zeta_n))=0.$
\end{definition}
\begin{remark}
Notice that  $(\zeta_n)$  converges  to $1$ non-tangentially.
The definition of $K'$-convergence  does not depend on the choice of the complex geodesic \cite[Proposition~7.8]{Arosio-Fiacchi}. 
Moreover, a sequence $(z_n)$ converging  non-tangentially  to $\xi$ (in the sense of cones) is also $K'$-convergent (see \cite[Corollary 7.15]{Arosio-Fiacchi}).
Finally, the concepts of $K$-convergence and $K'$-convergence can be characterized in a purely extrinsic way through the multitype and the multitype basis (see \cite[Theorem 7.9]{Arosio-Fiacchi}).
\end{remark}
We can now introduce the concepts of $K$-limit and $K'$-limit (sometime also called  {\it restricted $K$-limit}) of a function.
\begin{definition}[$K$-limits and $K'$-limits]
Let $D\subset\C^n$ be a $\C$-proper convex domain and let $\xi\in\partial D$ be a point of locally finite type.  Let $f\colon D\to \C$ be a function and let $a\in \C$. 
We say that  $K\textrm{-}\lim_{z\to\xi}f(z)=a$ if $\lim_{n\to\infty} f(z_n)=a$ for every  sequence $(z_n)$ in $D$ which $K$-converges to $\xi$. 
Similarly, we say that $K'\textrm{-}\lim_{z\to\xi}f(z)=a$ if $\lim_{n\to\infty} f(z_n)=a$  for every sequence $(z_n)$  in $D$ which $K'$-converges to $\xi$.
\end{definition}

\section{Definition of the function $\Omega_\xi$}
In \cite{Arosio-Fiacchi}, as a tool to obtain a Julia--Wolff--Carath\'eodory theorem on convex domains of finite type, the following function was introduced.
		\begin{definition}
			Let $D\subset\C^n$ be a $\C$-proper convex domain and let $\xi\in\partial D$ be a point of locally finite type.
			We define  $\Omega_\xi\colon D\to (-\infty,0)$ as
			$$\Omega_\xi(z)=-\frac{1}{\varphi'_N(1)},$$ where $\varphi\colon \D\to D$ is a complex geodesic such that $\varphi(0)=z$ and with endpoint $\xi$. Note that even though the complex geodesic $\varphi$ might not be unique, $\Omega_\xi(z)$ is well-defined thanks to Proposition \ref{horoderivatives}.
		\end{definition}

		\begin{remark}
		The function $\Omega^\D_1$ is equal to the classical Poisson kernel
		$$\Omega^\D_1(\zeta)=-\frac{1-|\zeta|^2}{|1-\zeta|^2}.$$ 
	It follows from \eqref{ponte}
	 that  in strongly convex domains with $C^\infty$-smooth boundaries the function $\Omega_\xi$ coincides with the pluricomplex Poisson kernel introduced by  Patrizio, Trapani and the
	 second author \cite{BP,BPT}, and that  in strongly  convex domains with $C^3$-smooth boundary  $\Omega_\xi$ coincides with the pluricomplex Poisson kernel  introduced by Huang--Wang \cite{HuWa}.
	 \end{remark}
	
The next result  follows immediately from Proposition \ref{horoderivatives} (see also Remark~\ref{Rem:horo-impl-cont}).
		\begin{proposition}[Poisson-horofunction formula]\label{omegahorofunction}
		Let $D\subset\C^n$ be a $\C$-proper convex domain and let $\xi\in\partial D$ be a point of locally finite type.
			Then  $-\log|\Omega_\xi(\cdot)|$ is a horofunction centered at $\xi$, that is 
			for all $z,w\in D$,
			\begin{equation}\label{orosferepoisson}
				h_{\xi,z}(w)=\log |\Omega_\xi(z)|-\log  |\Omega_\xi(w)|.
			\end{equation}
			As a consequence $\Omega_\xi$ is continuous and its level sets are exactly  horospheres centered in $\xi$.
		\end{proposition}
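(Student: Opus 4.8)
The plan is to reduce everything to Proposition~\ref{horoderivatives}, which already gives the formula $h_{\xi,\varphi(0)}(\psi(0))=\log(\psi'_N(1)/\varphi'_N(1))$ for complex geodesics $\varphi,\psi$ with common endpoint $\xi$. First, fix $z,w\in D$. By Remark~\ref{remarkgeodesic}(3) there exist complex geodesics $\varphi,\psi\colon\D\to D$ with endpoint $\xi$ such that $\varphi(0)=z$ and $\psi(0)=w$. Unwinding the definition of $\Omega_\xi$, we have $\Omega_\xi(z)=-1/\varphi'_N(1)$ and $\Omega_\xi(w)=-1/\psi'_N(1)$, so that $\log|\Omega_\xi(z)|-\log|\Omega_\xi(w)|=\log\psi'_N(1)-\log\varphi'_N(1)=\log(\psi'_N(1)/\varphi'_N(1))$. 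By Proposition~\ref{horoderivatives} the right-hand side equals $h_{\xi,\varphi(0)}(\psi(0))=h_{\xi,z}(w)$. This establishes \eqref{orosferepoisson}. (One should note that $\Omega_\xi$ is well-defined independently of the chosen geodesics precisely because of the last assertion of Proposition~\ref{horoderivatives}, so no ambiguity arises.)

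For the continuity statement, I would invoke the fact, recorded in the definition of the horofunction, that $(z,w)\mapsto h_{\xi,z}(w)$ is continuous on $D\times D$ (this follows from the anti-symmetry of $(z,p)\mapsto k_D(z,w)-k_D(w,p)$ together with the uniform-on-compacta convergence of the defining limit, guaranteed by Theorem~\ref{strgasy} and Ascoli--Arzel\`a). Fix a base-point $p\in D$. Then by \eqref{orosferepoisson} we have $-\log|\Omega_\xi(w)|=h_{\xi,p}(w)-\log|\Omega_\xi(p)|$ for every $w\in D$, so $w\mapsto-\log|\Omega_\xi(w)|$ is continuous as a function of $w$, hence $|\Omega_\xi|$ is continuous and strictly positive, and therefore $\Omega_\xi$ itself is continuous on $D$.

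Finally, for the statement about level sets: from \eqref{orosferepoisson}, for any fixed base-point $p$ and any $R>0$, the condition $|\Omega_\xi(w)|>|\Omega_\xi(p)|/R$ is equivalent to $-\log|\Omega_\xi(w)|<\log R-\log|\Omega_\xi(p)|$, i.e. to $h_{\xi,p}(w)<\log R$, which is exactly the defining condition of the horosphere $E_p(\xi,R)$. Since $\Omega_\xi<0$, the sublevel set $\{\Omega_\xi<c\}$ for $c<0$ is the set $\{|\Omega_\xi|>|c|\}$, which is a horosphere centered at $\xi$; conversely every horosphere arises this way by choosing $c$ appropriately. I do not expect any serious obstacle here — the proposition is essentially a restatement of Proposition~\ref{horoderivatives} combined with the already-established continuity of the horofunction; the only point requiring a little care is bookkeeping the sign conventions and the reciprocal in the definition of $\Omega_\xi$.
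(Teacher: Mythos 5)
Your proof is correct and follows exactly the route the paper takes: the paper states that the proposition follows immediately from Proposition~\ref{horoderivatives} (together with Remark~\ref{Rem:horo-impl-cont} and the continuity of $(z,w)\mapsto h_{\xi,z}(w)$ noted in the definition of the horofunction), and your write-up is precisely that deduction spelled out, with the sign and reciprocal bookkeeping done correctly. The only cosmetic point is that the ``level sets'' in the statement are to be read as sublevel sets (horospheres being sublevel sets of the horofunction), which is how you treated them.
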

		As a corollary one immediately obtains the behaviour of $\Omega_\xi$ on complex geodesics.
		
		\begin{corollary}\label{forPoigeo}
			Let $D\subset\C^n$ be a $\C$-proper convex domain and let $\xi\in\partial D$ be a point of locally finite type. Let $\varphi\colon \D\to D$ be a complex geodesic with endpoint $\xi$.
			Then, for all $\zeta\in \D$,
			$$\Omega_\xi^D(\varphi(\zeta))= \frac{\Omega_1^\D(\zeta)}{\varphi'_N(1)}.$$	
		\end{corollary}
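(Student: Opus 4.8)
The plan is to obtain this as a direct consequence of the Poisson-horofunction formula (Proposition~\ref{omegahorofunction}) combined with an elementary computation of the Poincar\'e distance, so that no further information about complex geodesics with endpoint $\xi$ is required.

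Fix $\zeta\in\D$. Applying Proposition~\ref{omegahorofunction} with base-point $\varphi(0)$ and variable point $\varphi(\zeta)$ gives
\[
h_{\xi,\varphi(0)}(\varphi(\zeta))=\log|\Omega_\xi(\varphi(0))|-\log|\Omega_\xi(\varphi(\zeta))|=-\log\varphi'_N(1)-\log|\Omega_\xi(\varphi(\zeta))|,
\]
using $\Omega_\xi(\varphi(0))=-1/\varphi'_N(1)$ and $\varphi'_N(1)>0$. It thus suffices to show $h_{\xi,\varphi(0)}(\varphi(\zeta))=-\log|\Omega_1^\D(\zeta)|$, since then, exponentiating and comparing signs (both $\Omega_\xi(\varphi(\zeta))$ and $\Omega_1^\D(\zeta)$ are negative while $\varphi'_N(1)>0$), one gets $\Omega_\xi(\varphi(\zeta))=\Omega_1^\D(\zeta)/\varphi'_N(1)$.

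To compute the horofunction, I would evaluate the limit in its definition along the sub-path $w=\varphi(s)$ with $s\to1^-$. This is legitimate: $\varphi(s)\to\xi$ as $s\to1^-$ by definition of endpoint, and the limit $\lim_{w\to\xi}\bigl[k_D(\varphi(\zeta),w)-k_D(w,\varphi(0))\bigr]$ exists by Theorem~\ref{strgasy}, hence equals the limit along any sub-path approaching $\xi$. Since $\varphi$ is a complex geodesic, $k_D(\varphi(\zeta),\varphi(s))=k_\D(\zeta,s)$ and $k_D(\varphi(s),\varphi(0))=k_\D(s,0)$, so $h_{\xi,\varphi(0)}(\varphi(\zeta))=\lim_{s\to1^-}\bigl[k_\D(\zeta,s)-k_\D(s,0)\bigr]$. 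With the normalization~\eqref{normalization}, writing $m_s(w)=\tfrac{w-s}{1-sw}$ and using $1-|m_s(\zeta)|^2=\tfrac{(1-s^2)(1-|\zeta|^2)}{|1-s\zeta|^2}$, a routine expansion gives $k_\D(\zeta,s)=\log\tfrac{4|1-s\zeta|^2}{(1-s^2)(1-|\zeta|^2)}+o(1)$ and $k_\D(s,0)=\log\tfrac{4}{1-s^2}+o(1)$ as $s\to1^-$, whence the difference tends to $\log\tfrac{|1-\zeta|^2}{1-|\zeta|^2}=-\log|\Omega_1^\D(\zeta)|$, using $\Omega_1^\D(\zeta)=-\tfrac{1-|\zeta|^2}{|1-\zeta|^2}$. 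This completes the argument.

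I do not anticipate a genuine obstacle: the corollary is essentially a restatement of the Poisson-horofunction formula restricted to one geodesic through $\xi$. The only point meriting a word of justification is the interchange in the third step---reading off the horofunction along $\varphi(s)$ instead of over an arbitrary $w\to\xi$---which rests on the already-established existence of the horofunction limit. As an alternative route not using the Poincar\'e computation, one can write $\varphi(\zeta)=\psi(0)$ with $\psi=\varphi\circ\tau$ for $\tau\in\Aut(\D)$ fixing $1$ and $\tau(0)=\zeta$ (the stabilizer of $1$ in $\Aut(\D)$ acts transitively on $\D$), observe that $\psi'_N(1)=\tau'(1)\,\varphi'_N(1)$ because $\tau$ is holomorphic at $1$ with $\tau'(1)>0$ and preserves non-tangential approach, identify $\tau'(1)=|1-\zeta|^2/(1-|\zeta|^2)$ from the equality case of Julia's lemma, and conclude $\Omega_\xi(\varphi(\zeta))=-1/\psi'_N(1)=\Omega_1^\D(\zeta)/\varphi'_N(1)$.
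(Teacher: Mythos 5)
Your argument is correct and is essentially the paper's proof: both rest on the Poisson--horofunction formula (Proposition~\ref{omegahorofunction}) applied in $D$ together with the identity $h^D_{\xi,\varphi(0)}(\varphi(\zeta))=h^\D_{1,0}(\zeta)$ obtained by evaluating the horofunction limit along the geodesic, using that $\varphi$ is an isometry. The only cosmetic difference is that you compute $h^\D_{1,0}(\zeta)=-\log|\Omega^\D_1(\zeta)|$ by hand from the Poincar\'e distance, whereas the paper simply cites \eqref{orosferepoisson} in the disc.
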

		\begin{proof}
			By \eqref{orosferepoisson}, for all $\zeta\in \D$,
			$$-\log|\Omega^\D_1(\zeta)|= h^\D_{1,0}(\zeta)=h^D_{\xi,\varphi(0)}(\varphi(\zeta))=\log|\Omega^D_\xi(\varphi(0))|-\log|\Omega^D_\xi(\varphi(\zeta))|,$$
			and we are done.
		\end{proof}

From the previous corollary and the explicit description of the complex geodesics in \cite{JPZ}, we can compute $\Omega_\xi$ in the case of smooth complex ellipsoids.

\begin{example}[Complex ellipsoid]\label{ellipsoid}
	Let $m:=(m_1,\cdots,m_{n-1})$ an array of even integers, and let
	$$\mathbb{E}_m:=\left\{z\in\C^n: |z_0|^2+\sum_{j=1}^{n-1}|z_j|^{m_j}<1 \right\}.$$
	If $\xi=(1,0)\in\C\times\C^{n-1}$, then
	$$\Omega^{\mathbb{E}_m}_\xi(z)=-\frac{1-|z_0|^2-\sum_{j=1}^{n-1}|z_j|^{m_j}}{|1-z_0|^2}.$$
\end{example}

\section{$-\Omega_\xi$ is the normal derivative of the pluricomplex Green function} In this section we prove that the  normal derivative at $\xi$ of the pluricomplex Green function $G_z$ exists and equals $-\Omega_\xi(z)$.
We first need to introduce the pluricomplex Green function for (non necessarily bounded) $\C$-proper convex domains.
\begin{proposition}
Let $D\subset \C^n$ be a $\C$-proper convex domain and let $w\in D$.
 The Monge--Amp\`ere equation
\begin{equation}\begin{cases}
u\in {\rm Psh}(D)\cap L^\infty_{\rm{loc}}( D\setminus \{w\}),\\
(dd^cu)^n=0 \quad\textrm{on} \quad D\setminus \{w\},\\
u(z)\stackrel{z\to\eta}\longrightarrow 0 \ \mbox{ if }\eta\in\partial^* D,\\
u - \log|z-w| =O(1)\quad\textrm{as} \quad z\to w,
\end{cases}
\end{equation}
has a unique continuous solution $G_w\colon  \overline D^*\to [-\infty,0]$ which is defined as
\begin{equation}\label{GinCconvex}
G_w(z):=\log\tanh (k_D(z,w)/2), \quad z\in D.
\end{equation}
\end{proposition}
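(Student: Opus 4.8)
\emph{Plan of proof.} Write $G_w(z):=\log\tanh(k_D(z,w)/2)$. The idea is to check that $G_w$ is a solution and then to obtain uniqueness from a comparison argument; the only point not already present in the classical bounded case is that $D$ need not be bounded. Since $D$ is $\C$-proper, $k_D=c_D$ is a continuous distance (Remark~\ref{remarkgeodesic}) and, by completeness of $k_D$, the function $k_D(\cdot,w)$ is a continuous exhaustion of $D$; hence $G_w$ extends continuously to $\overline D^*$ with $G_w\equiv 0$ on $\partial^*D$ and $G_w(\eta)=-\infty$ if and only if $\eta=w$, so $G_w\in C(D\setminus\{w\})\subset L^\infty_{\mathrm{loc}}(D\setminus\{w\})$. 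Using the normalization \eqref{normalization} we have $\tanh(c_D(\cdot,w)/2)=\sup\{|f|:f\in\O(D,\D),\ f(w)=0\}$, so $G_w=\sup_f\log|f|$ is a supremum of plurisubharmonic functions which, being continuous, is plurisubharmonic. Comparison with an inscribed ball $B(w,r)\subset D$ gives $\tanh(k_D(z,w)/2)\le|z-w|/r$, hence $G_w(z)\le\log|z-w|-\log r$, while the reverse bound $G_w(z)\ge\log|z-w|+O(1)$ follows because $D$ is taut, so the Kobayashi--Royden metric is continuous and strictly positive and therefore $k_D(z,w)\ge c|z-w|$ for $z$ near $w$.

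Next I would prove that $G_w$ is maximal on $D\setminus\{w\}$, equivalently that $(dd^c G_w)^n=0$ there. When $D$ is bounded convex this is classical: $D$ is then hyperconvex, Demailly's pluricomplex Green function $u_w$ is the unique continuous solution of \eqref{greenintro}, and $u_w=G_w$ --- indeed $G_w\le u_w$ because $G_w$ is an admissible competitor, and $u_w\le G_w$ by restricting any competitor $v$ to a complex geodesic $\varphi$ through $w$ and the given point and applying the one-variable maximum principle to the subharmonic function $v\circ\varphi-\log|\zeta|$. For a general $\C$-proper convex $D$ I would exhaust $D=\bigcup_j D_j$ by bounded convex domains with $D_j\subset\subset D_{j+1}$ and $w\in D_1$. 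Then $k_{D_j}(\cdot,w)\downarrow k_D(\cdot,w)$ pointwise, so $G_w^{D_j}\downarrow G_w$ and, by Dini's theorem, locally uniformly on $D\setminus\{w\}$; since $(dd^c\cdot)^n=0$ is a local condition and each $G_w^{D_j}$ is maximal on $D_j\setminus\{w\}$, we get $(dd^c G_w^{D_j})^n=0$ on any fixed $V\subset\subset D\setminus\{w\}$ for $j$ large, and the Bedford--Taylor convergence theorem for decreasing sequences of locally bounded plurisubharmonic functions yields $(dd^c G_w)^n=0$ on $V$, hence on $D\setminus\{w\}$. (One may also note that $G_w$ is harmonic on every complex geodesic through $w$, where $G_w\circ\varphi=\log|\zeta|$.) Together with the first step, $G_w$ solves the stated Monge--Amp\`ere problem.

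Finally, for uniqueness, let $u$ be any continuous solution. Then $u$ and $G_w$ are continuous on $\overline D^*$, vanish on $\partial^*D$, satisfy $u-G_w=O(1)$ near $w$, and both have $(dd^c\cdot)^n=0$ on $D\setminus\{w\}$. For $\varepsilon>0$ the function $(1+\varepsilon)G_w$ is plurisubharmonic, $(1+\varepsilon)G_w-u=\varepsilon G_w+O(1)\to-\infty$ near $w$, and $(1+\varepsilon)G_w-u\to 0$ at $\partial^*D$; exhausting $D\setminus\{w\}$ by the relatively compact sets $\{k_D(\cdot,w)<j\}\setminus\overline{B(w,1/j)}$, using the maximality of $u$ on each of them, and letting $j\to\infty$ gives $(1+\varepsilon)G_w\le u$ on $D\setminus\{w\}$. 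Letting $\varepsilon\to0$ yields $G_w\le u$, and the symmetric argument gives $u\le G_w$, so $u=G_w$. I expect the main difficulty to be the maximality step (and the parallel comparison in the uniqueness part) in the possibly unbounded setting, where Demailly's theory is not directly available since hyperconvexity may fail: one must reduce to the bounded convex case by exhaustion, which requires both the convergence $k_{D_j}(\cdot,w)\downarrow k_D(\cdot,w)$ and a careful control of the boundary values of the comparison functions along $\partial D$ and near the pole in the non-compact domain $D\setminus\{w\}$.
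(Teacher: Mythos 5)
Your proposal is correct, but the central step --- maximality of $G_w$ on $D\setminus\{w\}$ --- is handled by a genuinely different route than the paper's. The paper simply notes that $G_w$ is harmonic along every complex geodesic (geodesics exist through any pair of points by Remark \ref{remarkgeodesic}, and are proper maps $\D\to D$ since $k_D$ is complete) and invokes the Sadullaev-type criterion of Proposition \ref{maximaldisk}; this is exactly the one-line argument you mention only in parentheses and do not develop. You instead exhaust $D$ by bounded convex domains $D_j$, identify $G^{D_j}_w$ with Demailly's Green function via the disc maximum-principle comparison, and pass to the limit using $k_{D_j}\downarrow k_D$ together with Bedford--Taylor continuity of $(dd^c\cdot)^n$ along decreasing sequences of locally bounded plurisubharmonic functions. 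Both routes are valid: the paper's is shorter and intrinsic (no exhaustion, no appeal to the bounded hyperconvex theory), while yours re-derives the bounded case explicitly and trades the properness-of-geodesics observation for standard approximation machinery. For the remaining points (plurisubharmonicity via $c_D=k_D$, the boundary behaviour, the logarithmic pole, uniqueness) you supply details the paper leaves as ``easily verified'' or delegates to \cite[Theorem 7.4]{BracciTrapani}; your uniqueness argument is essentially that classical comparison, correctly adapted to the unbounded setting through the $k_D$-sublevel exhaustion. Two small points to tighten, neither a real gap: on the outer boundary $\{k_D(\cdot,w)=j\}$ one only has $(1+\varepsilon)G_w\le u+\delta_j$ with $\delta_j\to0$ (obtained from compactness of $\overline D^*$ and the pointwise boundary condition on $u$), so the maximality comparison should be run with $(1+\varepsilon)G_w-\delta_j$ and the error removed as $j\to\infty$; and the monotone convergence $k_{D_j}(\cdot,w)\downarrow k_D(\cdot,w)$ for an increasing exhaustion, while standard, deserves a reference or a one-line proof via shrinking the analytic discs in a Kobayashi chain.
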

\begin{proof} Recall that  $c_D=k_D$  by (2) in Remark \ref{remarkgeodesic}.
The function $G_w$ is clearly continuous and harmonic on every complex geodesic, and is  plurisubharmonic since it is equal to $z\mapsto \log\tanh (c_D(z,w)/2)$.
Thanks to  Proposition \ref{maximaldisk},  the function $G_w\colon D\to [-\infty,0)$ is  maximal plurisubharmonic.
The boundary behaviour is easily verified. Uniqueness can be shown as in \cite[Theorem 7.4]{BracciTrapani}.
\end{proof}

\begin{theorem}\label{derivataGreen}
Let $D\subset\C^n$ be a $\C$-proper convex domain and let  $\xi\in\partial D$ be a point of locally finite type. Then for all $z\in D$ the limit
$$\lim_{t\to0^-}\frac{1}{t}G_z(\xi+tn_\xi)=:\frac{\partial G_{z}}{\partial n_\xi}(\xi)$$ exists and equals
$-\Omega_\xi(z).$ Moreover the convergence is
 uniform on compact sets.
\end{theorem}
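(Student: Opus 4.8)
I want to compute $\lim_{t\to 0^-} \tfrac1t\, G_z(\xi+tn_\xi) = \lim_{t\to 0^-} \tfrac1t \log\tanh(k_D(z,\xi+tn_\xi)/2)$. For small $|t|$, the point $\sigma_z(t):=\xi+tn_\xi$ lies close to $\partial D$, so $k_D(z,\sigma_z(t))$ is large and $\tanh(k_D/2) = 1 - 2e^{-k_D} + o(e^{-k_D})$, whence $\log\tanh(k_D(z,\sigma_z(t))/2) = -2e^{-k_D(z,\sigma_z(t))} + o(e^{-k_D(z,\sigma_z(t))})$ as $t\to 0^-$. On the other hand $\delta_D(\sigma_z(t)) = |t|$ for $t$ small enough (since $n_\xi$ is the inner-normal direction up to sign — careful with signs: $n_\xi$ is the \emph{outer} normal, so I take $t<0$ and $\delta_D(\xi+tn_\xi)=|t|=-t$). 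Therefore the whole problem reduces to controlling $k_D(z,\sigma_z(t))+\log\delta_D(\sigma_z(t)) = k_D(z,\sigma_z(t))+\log|t|$ as $t\to 0^-$, which is exactly what Theorem \ref{main 2} asserts: it tends to $-\log(|\Omega_\xi(z)|/2)$. Hence $e^{-k_D(z,\sigma_z(t))} = |t|\cdot e^{\log\delta_D+k_D} \cdot |t|^{-1}\cdots$ — more precisely $e^{-k_D(z,\sigma_z(t))} = \delta_D(\sigma_z(t))\, e^{\,k_D(z,\sigma_z(t))+\log\delta_D(\sigma_z(t))}$, so $\tfrac1t e^{-k_D(z,\sigma_z(t))} = \tfrac{-1}{|t|}\cdot |t| \cdot e^{k_D+\log\delta_D} \to -e^{-\log(|\Omega_\xi(z)|/2)} = -\tfrac{2}{|\Omega_\xi(z)|}$. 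Plugging into the expansion of $\log\tanh$ gives $\tfrac1t G_z(\xi+tn_\xi) \to -2\cdot\bigl(-\tfrac{1}{|t|}e^{-k_D}\bigr)\cdot\tfrac{|t|}{t}\cdots$; unwinding the signs carefully yields the limit $-\Omega_\xi(z) = \tfrac{1}{\varphi'_N(1)} > 0$... wait, $-\Omega_\xi(z)=+1/\varphi'_N(1)$, and since $\Omega_\xi(z)<0$ we have $|\Omega_\xi(z)|=-\Omega_\xi(z)$; the final bookkeeping must produce exactly $-\Omega_\xi(z)$, and I will track the factor of $2$ from $\log\tanh(x/2)\sim -2e^{-x}$ against the $2$ in $-\log(|\Omega_\xi|/2)$ so they cancel correctly.

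\textbf{Steps in order.} First, establish the elementary asymptotic $\log\tanh(s/2) = -2e^{-s}\bigl(1+O(e^{-s})\bigr)$ as $s\to+\infty$; this is a one-line Taylor estimate. Second, observe that for $t<0$ with $|t|$ small, $\xi+tn_\xi\in D$ and $\delta_D(\xi+tn_\xi)=-t$ (uses only that $\partial D$ is $C^1$ near $\xi$ and $n_\xi$ is the unit outer normal, so the inner normal segment realizes the distance for short length). Third — the crux — invoke Theorem \ref{main 2} along the curve $t\mapsto \xi+tn_\xi$: since $\xi+tn_\xi\to\xi$ as $t\to 0^-$, we get $k_D(z,\xi+tn_\xi)+\log(-t)\to -\log(|\Omega_\xi(z)|/2)$. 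Fourth, combine: write $\tfrac1t G_z(\xi+tn_\xi) = \tfrac1t\log\tanh(k_D(z,\xi+tn_\xi)/2)$, substitute the expansion from Step 1 with $s=k_D(z,\xi+tn_\xi)$, and use $e^{-s} = (-t)\,e^{\,s+\log(-t) - 2\log(-t)}$... cleaner: $e^{-k_D} = e^{-(k_D+\log(-t))}\cdot(-t)$, so $\tfrac1t e^{-k_D} = \tfrac{-t}{t}\cdot e^{-(k_D+\log(-t))} = -e^{-(k_D+\log(-t))} \to -e^{\log(|\Omega_\xi(z)|/2)} = -|\Omega_\xi(z)|/2$. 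Hence $\tfrac1t G_z(\xi+tn_\xi) \to -2\cdot(-|\Omega_\xi(z)|/2) = |\Omega_\xi(z)| = -\Omega_\xi(z)$, as claimed.

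\textbf{Uniformity on compacta.} For the uniform convergence in $z$ over a compact $K\subset D$, I would check that each of the three ingredients is uniform: the Taylor expansion of $\log\tanh$ is uniform once $k_D(z,\xi+tn_\xi)$ is uniformly large, which holds for $z\in K$ and $-t$ small since $k_D(z,\xi+tn_\xi)\geq k_D(z_0,\xi+tn_\xi) - \mathrm{diam}_{k_D}(K)$ for a fixed $z_0$, and $k_D(\cdot,\xi+tn_\xi)\to+\infty$; and Theorem \ref{main 2}'s convergence is uniform on compacta — this should be stated or extractable from its proof (the estimates there are via the scaling method and the strong asymptoticity results, which come with locally uniform control). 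So the main obstacle is not in this argument per se but in whether Theorem \ref{main 2} is genuinely available with uniformity; assuming it (as the excerpt permits), the present proof is essentially a sign-and-constant bookkeeping around the single substitution $G_z = \log\tanh(k_D/2)$. I would also remark that this gives, combined with the Poisson-horofunction formula (Proposition \ref{omegahorofunction}), the Abate--Venturini identity \eqref{abate-venturini-intro} in this setting, as announced in the introduction.
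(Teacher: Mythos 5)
Your reduction of the statement to the estimate $k_D(z,\xi+tn_\xi)+\log(-t)\to-\log(|\Omega_\xi(z)|/2)$, and the sign-and-constant bookkeeping around $G_z=\log\tanh(k_D/2)\sim-2e^{-k_D}$, are both correct. The genuine gap is that you obtain that estimate by invoking Theorem \ref{main 2} (Theorem \ref{Kobestimate} in the body), and inside this paper that is circular: Theorem \ref{Kobestimate} is proved in Section 7, \emph{after} the present theorem, and its proof of the lower bound rests on Remark \ref{kobestnormal}, which is explicitly extracted from the proof of the theorem you are asked to prove (it also uses Proposition \ref{contpole} and Lemma \ref{lemmaexclaim}, all later material). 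So the crux of your argument assumes a result that the paper derives from the statement at hand; nothing available at this point of the paper gives you Theorem \ref{main 2}. A secondary problem, which you yourself flag, is uniformity: Theorem \ref{main 2} is stated for a fixed base point with no locally uniform control in that variable, whereas the theorem requires convergence uniform on compacta in $z$, so even granting the estimate you would still owe an argument for the uniform statement.

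The repair is exactly the ingredient you skipped, and it is available before Section 5: only the normal-direction case of the Kobayashi estimate is needed, and it follows from Theorem \ref{strgasysegm} together with the Poisson-horofunction formula (Proposition \ref{omegahorofunction}). Concretely, take a complex geodesic $\varphi$ with endpoint $\xi$ normalized so that $\varphi'_N(1)=1$; then $\sigma(t)=\xi-(1-t)n_\xi$ satisfies $k_D(\varphi(t),\sigma(t))\to0$, hence $k_D(z,\sigma(t))=k_D(z,\varphi(t))+o(1)$, while $k_D(z,\varphi(t))-k_\D(0,t)\to h_{\xi,\varphi(0)}(z)=-\log|\Omega_\xi(z)|$ uniformly on compacta (the difference is $1$-Lipschitz in $z$, which gives the uniformity), and $k_\D(0,t)+\log(1-t)\to\log 2$. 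Feeding this into your $\log\tanh$ expansion yields the claim with the required uniformity; this is precisely the route the paper takes, factoring $G_z(\sigma(t))/(1-t)$ into the three corresponding ratios. By contrast, the paper then deduces the normal-segment Kobayashi estimate (Remark \ref{kobestnormal}) and, from it, the full Theorem \ref{Kobestimate} — i.e., the logical arrow goes in the direction opposite to the one your proposal uses.
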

\proof
Let $\sigma(t)=\xi-(1-t)n_\xi$, we need to show that 
$$\lim_{t\to1^-}\frac{G_z(\sigma(t))}{1-t}=\Omega_\xi(z)$$
uniformly on compact sets. Let $\varphi:\D\to D$ be a complex geodesic with endpoint $\xi$ and  $\varphi'_N(1)=1$. 

We have 
\begin{equation}\label{prodottone}
\frac{G_z(\sigma(t))}{1-t}=\frac{G_z(\sigma(t))}{-2e^{-k_D(z,\sigma(t))}}\cdot\frac{-2e^{-k_D(z,\sigma(t))}}{-2e^{-k_D(z,\varphi(t))}}\cdot\frac{-2e^{-k_D(z,\varphi(t))}}{1-t}.
\end{equation}
We now study the limit of the three factors on the right hand side of \eqref{prodottone}.

By (\refeq{GinCconvex}) we have
$$G_z(\sigma(t))=\log\left(\frac{1-e^{-k_D(z,\sigma(t))}}{1+e^{-k_D(z,\sigma(t))}}\right)=\log\left(1-\frac{2e^{-k_D(z,\sigma(t))}}{1+e^{-k_D(z,\sigma(t))}}\right),$$ hence 
$$\lim_{t\to1^-}\frac{G_z(\sigma(t))}{-2e^{-k_D(z,\sigma(t))}}=1$$ uniformly on compact sets. 
Now by
  Theorem \ref{strgasysegm}
we have 
$\lim_{t\to1^-}k_D(\varphi(t),\sigma(t))=0$, and thus
$$|k_D(z,\varphi(t))-k_D(z,\sigma(t))|\leq k_D(\varphi(t),\sigma(t))\to0.$$
Hence  $$\lim_{t\to1^-}\frac{-2e^{-k_D(z,\sigma(t))}}{-2e^{-k_D(z,\varphi(t))}}=1,$$ uniformly.
Notice that 
$$\lim_{t\to1^-}k_D(z,\varphi(t))-k_\D(0,t)=\lim_{t\to1^-}k_D(z,\varphi(t))-k_D(\varphi(0),\varphi(t))=h_{\xi,\varphi(0)}(z),$$
uniformly on compact subsets. Hence, by the Poisson-horofunction formula (Proposition \ref{omegahorofunction}),
$$\frac{-2e^{-k_D(z,\varphi(t))}}{1-t}=e^{-k_D(z,\varphi(t))+k_\D(0,t)}
\cdot \frac{-2e^{-k_\D(0,t)}}{1-t}
\stackrel{t\to 1^-}\longrightarrow -e^{-h_{\xi,\varphi(0)}(z)}=\Omega_\xi(z),$$
uniformly on compact subsets.
\endproof

As a corollary we obtain a generalization of Abate--Venturini's formula to this setting.
\begin{corollary}\label{newvesentini}
Let $D\subset\C^n$ be a $\C$-proper convex domain and let  $\xi\in\partial D$ be a point of locally finite type. 
	Then for all $z,w\in D$ we have
	$$h_{\xi,w}(z)=\log\left(\frac{\partial G_{w}}{\partial n_\xi}(\xi)\right)-\log\left(\frac{\partial G_{z}}{\partial n_\xi}(\xi)\right).$$
\end{corollary}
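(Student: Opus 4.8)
The plan is to combine the two results just established, Theorem \ref{derivataGreen} and Proposition \ref{omegahorofunction}. First I would record that, by Theorem \ref{derivataGreen}, for every $z\in D$ the normal derivative $\frac{\partial G_z}{\partial n_\xi}(\xi)$ exists and equals $-\Omega_\xi(z)$; since $\Omega_\xi(z)<0$ on $D$ this gives $\frac{\partial G_z}{\partial n_\xi}(\xi)=|\Omega_\xi(z)|>0$, so its logarithm is well defined. Applying this at both $z$ and $w$ yields
$$\log\left(\frac{\partial G_{w}}{\partial n_\xi}(\xi)\right)-\log\left(\frac{\partial G_{z}}{\partial n_\xi}(\xi)\right)=\log|\Omega_\xi(w)|-\log|\Omega_\xi(z)|.$$

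Next I would invoke the Poisson-horofunction formula (Proposition \ref{omegahorofunction}), namely $h_{\xi,z}(w)=\log|\Omega_\xi(z)|-\log|\Omega_\xi(w)|$ for all $z,w\in D$. Swapping the roles of $z$ and $w$ — equivalently, using the antisymmetry $h_{\xi,z}(w)=-h_{\xi,w}(z)$ noted after the definition of the horofunction — gives $h_{\xi,w}(z)=\log|\Omega_\xi(w)|-\log|\Omega_\xi(z)|$, which is exactly the right-hand side of the display above, and the corollary follows.

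There is no genuine obstacle at this stage: all the substance lies in Theorem \ref{derivataGreen} (the metric proof, via strong asymptoticity of complex geodesics to the inner normal segment, that the normal derivative of $G_z$ at $\xi$ exists and equals $-\Omega_\xi(z)$) and in Proposition \ref{omegahorofunction}; the corollary is merely their juxtaposition. The only point requiring care is the elementary sign bookkeeping — that $\Omega_\xi$ is negative, so $-\Omega_\xi=|\Omega_\xi|$, and that $h_{\xi,\cdot}(\cdot)$ is antisymmetric in its two $D$-arguments — which is what matches the orientation of the difference of logarithms in the statement.
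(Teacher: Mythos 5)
Your proof is correct and is precisely the paper's (implicit) argument: the corollary is stated without proof because it follows immediately by substituting $\frac{\partial G_z}{\partial n_\xi}(\xi)=-\Omega_\xi(z)=|\Omega_\xi(z)|$ from Theorem \ref{derivataGreen} into the Poisson-horofunction formula of Proposition \ref{omegahorofunction} with the roles of $z$ and $w$ exchanged. Your sign bookkeeping is accurate, so there is nothing to add.
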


\begin{remark}\label{kobestnormal}
The proof of Theorem \ref{derivataGreen} yields the following estimate of $k_D$ in the normal direction: for all $p\in D$,
$$\lim_{t\to0^+}[k_D(\xi-tn_\xi,p)+\log t]=-\log\frac{|\Omega_\xi(p)|}{2}.$$
\end{remark}

\section{$\Omega_\xi$ solves the Monge-Amp\`ere equation}
Let $D\subset\C^n$ be a $\C$-proper convex domain and let  $\xi\in\partial D$ be a point of locally finite type.  		
 In this section we will prove that the function $\Omega_\xi$ is a continuous solution to the following homogeneous Monge-Amp\`ere equation: 
 \begin{equation}\label{poissonnonintro}\begin{cases*}
u\in {\rm Psh}(D)\cap L^\infty_{\rm{loc}}( D), \\
 u<0  \mbox{ on }D,\\
(dd^cu)^n=0 \quad\textrm{on} \quad D,\\
				u(z)\stackrel{z\to\eta}\longrightarrow 0 \ \mbox{ if }\eta\in\partial^* D\backslash\{\xi\}\\
				u(z)\approx -\|z-\xi\|^{-1} \mbox{ as }z\to\xi\mbox{ non-tangentially},
			\end{cases*}
		\end{equation}
and thus  deserves  the name {\it pluricomplex Poisson kernel} of $D$ with singularity at $\xi$.

First of all, notice that $\Omega_\xi$ takes only strictly negative values by construction, and that it is continuous thanks to Proposition \ref{omegahorofunction}.

\begin{proposition}\label{maximal}
	Let $D\subset\C^n$ be a $\C$-proper convex domain and let  $\xi\in\partial D$ be a point of locally finite type. Then the function $\Omega_\xi$ is a maximal plurisubharmonic function.
\end{proposition}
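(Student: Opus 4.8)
The plan is to deduce maximality of $\Omega_\xi$ from the sufficient condition in Proposition \ref{maximaldisk}: I must exhibit, for each $z\in D$, a proper holomorphic map $\varphi\colon\D\to D$ with $z\in\varphi(\D)$ such that $\Omega_\xi\circ\varphi$ is harmonic on $\D$. The natural candidate is, of course, a complex geodesic $\varphi$ with endpoint $\xi$ and $\varphi(0)=z$, which exists by (3) of Remark \ref{remarkgeodesic}. By Corollary \ref{forPoigeo} we have, for all $\zeta\in\D$,
\[
\Omega_\xi^D(\varphi(\zeta))=\frac{\Omega_1^\D(\zeta)}{\varphi'_N(1)},
\]
so $\Omega_\xi\circ\varphi$ is a positive multiple of the classical Poisson kernel $\Omega_1^\D(\zeta)=-\frac{1-|\zeta|^2}{|1-\zeta|^2}$, which is harmonic on $\D$ (it is the real part of $-\frac{1+\zeta}{1-\zeta}$, a holomorphic function on $\D$). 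Hence $\Omega_\xi\circ\varphi$ is harmonic on $\D$, as required.

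Before invoking Proposition \ref{maximaldisk} I still need two facts. First, $\Omega_\xi$ must be plurisubharmonic on $D$; this is exactly where property (3), namely $\Omega_\xi(z)=-\tfrac{\partial G_z}{\partial n_\xi}(\xi)$, enters. By Theorem \ref{derivataGreen} we have $\Omega_\xi(z)=\lim_{t\to 0^-}\tfrac1t G_z(\xi+tn_\xi)$, with the convergence uniform on compact subsets of $D$. Since $D$ is convex, the pluricomplex Green function is symmetric, $G_z(w)=G_w(z)$, so for each fixed $t<0$ the function $z\mapsto \tfrac1t G_z(\xi+tn_\xi)=\tfrac1t G_{\xi+tn_\xi}(z)$ is a negative multiple of a plurisubharmonic function, hence plurisubharmonic (here $\xi+tn_\xi\in D$ for $t<0$ small, and $G_{\xi+tn_\xi}$ is plurisubharmonic on $D$ by definition of the Green function). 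A locally uniform limit of plurisubharmonic functions is plurisubharmonic, so $\Omega_\xi\in\operatorname{Psh}(D)$. (Alternatively one could argue directly from Corollary \ref{forPoigeo} together with the fact that $\Omega_\xi$ is a locally bounded upper semicontinuous function which is subharmonic on every complex geodesic through $\xi$, but routing through the Green function is cleanest given what is already proved.) Second, I need $\Omega_\xi\in L^\infty_{\rm loc}(D)$: this is immediate from continuity of $\Omega_\xi$ (Proposition \ref{omegahorofunction}).

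With plurisubharmonicity and local boundedness in hand, and with the family of complex geodesics with endpoint $\xi$ providing, through Corollary \ref{forPoigeo}, proper holomorphic discs through every point on which $\Omega_\xi$ restricts to a harmonic function, Proposition \ref{maximaldisk} applies verbatim and yields that $\Omega_\xi$ is maximal. The only genuinely delicate point is the verification of plurisubharmonicity, and the potential obstacle there is that we do not know uniqueness (hence no a priori regularity) of the complex geodesic through a given point; but this is sidestepped entirely by using the symmetry of $G_w$ together with Theorem \ref{derivataGreen}, both already established, so no new work is needed. Properness of the complex geodesic $\varphi\colon\D\to D$ as a map into $D$ follows from the fact that $k_D(\varphi(0),\varphi(\zeta))=k_\D(0,\zeta)\to\infty$ as $|\zeta|\to1$ and $k_D$ is complete on the $\C$-proper convex domain $D$, so $\varphi(\zeta)$ leaves every compact subset of $D$; thus the hypotheses of Proposition \ref{maximaldisk} are met and the proof concludes.
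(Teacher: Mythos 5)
Your proof follows the paper's argument essentially verbatim: plurisubharmonicity via Theorem \ref{derivataGreen} together with the symmetry of the pluricomplex Green function on convex domains, then maximality via Corollary \ref{forPoigeo} and Proposition \ref{maximaldisk}. One point needs fixing, though it does not affect the substance. Theorem \ref{derivataGreen} states that $\lim_{t\to0^-}\frac1t G_z(\xi+tn_\xi)=\frac{\partial G_z}{\partial n_\xi}(\xi)=-\Omega_\xi(z)$, not $\Omega_\xi(z)$; and your next assertion, that for fixed $t<0$ the function $z\mapsto\frac1t G_{\xi+tn_\xi}(z)$ is plurisubharmonic because it is a \emph{negative} multiple of a plurisubharmonic function, is false as a general principle (a negative multiple of a psh function is plurisuperharmonic). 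The two sign slips cancel: the correct chain is $\Omega_\xi(z)=\lim_{t\to0^+}\frac1t G_z(\xi-tn_\xi)=\lim_{t\to0^+}\frac1t G_{\xi-tn_\xi}(z)$ by symmetry, where each approximant is a \emph{positive} multiple of the psh function $G_{\xi-tn_\xi}$, hence psh, and the locally uniform limit is psh --- which is exactly the paper's argument. The remainder (harmonicity of $\Omega_\xi\circ\varphi$ via Corollary \ref{forPoigeo}, and your extra verification that a complex geodesic is a proper map into $D$ via completeness of $k_D$ on a $\C$-proper convex domain) is correct; the properness check is a detail the paper leaves implicit.
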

\proof
	First of all we show that $\Omega_\xi$ is plurisubharmonic. By Theorem \ref{derivataGreen} and the symmetry of $G$, 
	$$\Omega_\xi(z)=\lim_{t\to0^+}\frac{G_z(\xi-tn_\xi)}{t}=\lim_{t\to0^+}\frac{G_{\xi-tn_\xi}(z)}{t},$$ where the convergence is uniform on compact subsets. Since the functions  $\frac{1}{t}G_{\xi-tn_\xi}$ are plurisubharmonic, it follows that  $\Omega_\xi$ is also plurisubharmonic.
	
	As for the maximality,
by Corollary \ref{forPoigeo}, for every complex geodesic $\varphi\colon \D\to D$ with $\varphi(1)=\xi$, we have
	$\Omega_\xi\circ\varphi=\Omega_1^\D/\varphi'_N(1),$
	which is a harmonic function on $\D$. By Proposition~\ref{maximaldisk}, $\Omega_\xi$ is maximal.
\endproof

Next, we study the behavior of $\Omega_\xi$ at the boundary.

\begin{proposition}
	Let $D\subset\C^n$ be a $\C$-proper convex domain and let  $\xi\in\partial D$ be a point of locally finite type.  If $\eta\in\partial^* D\backslash\{\xi\}$, then
	$\lim_{z\to\eta}	\Omega_\xi(z)=0.$
\end{proposition}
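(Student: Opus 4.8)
The plan is to exploit the Poisson-horofunction formula from Proposition~\ref{omegahorofunction} together with the fact that horospheres touch the boundary only at their center (Proposition~\ref{horoonepoint}). Fix a base-point $p\in D$. By Proposition~\ref{omegahorofunction} we have $h_{\xi,p}(z)=\log|\Omega_\xi(p)|-\log|\Omega_\xi(z)|$, so $\Omega_\xi(z)\to 0$ as $z\to\eta$ is equivalent to $h_{\xi,p}(z)\to +\infty$ as $z\to\eta$, for every $\eta\in\partial^*D\setminus\{\xi\}$. Equivalently, I must show that the sublevel sets $E_p(\xi,R)=\{z\in D: h_{\xi,p}(z)<\log R\}$ stay away from $\eta$: for each $R>0$ there is a neighborhood $U_\eta$ of $\eta$ (in the one-point compactification) with $E_p(\xi,R)\cap U_\eta=\varnothing$.

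First I would argue by contradiction: suppose there is a sequence $z_k\to\eta$ with $h_{\xi,p}(z_k)<\log R$ for some fixed $R$, i.e. $z_k\in E_p(\xi,R)$ for all $k$. Then $\eta$ lies in the closure $\overline{E_p(\xi,R)}^*$ taken in the one-point compactification of $\C^n$. By Proposition~\ref{horoonepoint}, $\overline{E_p(\xi,R)}^*\cap\partial^*D=\{\xi\}$, forcing $\eta=\xi$, contrary to the hypothesis $\eta\neq\xi$. Hence no such sequence exists, so $h_{\xi,p}(z)\to+\infty$ as $z\to\eta$, and therefore $\Omega_\xi(z)=-|\Omega_\xi(p)|e^{-h_{\xi,p}(z)}\to 0$. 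One minor point to check is that $z_k$ genuinely lies in $D$ and accumulates at $\eta\in\partial^*D$, so that $\eta$ is indeed a point of $\partial^*D$ belonging to the closure of $E_p(\xi,R)$ in $\overline{D}^*$; since $E_p(\xi,R)\subset D$ and $z_k\to\eta\notin D$, this is immediate.

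I do not anticipate a serious obstacle here: the statement is essentially a repackaging of the geometric fact, already established in \cite{Arosio-Fiacchi}, that horospheres are ``tangent'' to $\partial D$ only at $\xi$, combined with the identity relating $\Omega_\xi$ to the horofunction. The only thing that requires a word of care is the passage to the one-point compactification, which is needed because $D$ may be unbounded; but Proposition~\ref{horoonepoint} is already stated in exactly that generality, so the argument goes through verbatim. If one prefers a self-contained estimate avoiding Proposition~\ref{horoonepoint}, one could instead bound $h_{\xi,p}(z)=\lim_{w\to\xi}[k_D(z,w)-k_D(w,p)]$ from below near $\eta$ using the fact that $k_D(z,w)$ blows up when $z$ and $w$ approach distinct boundary points, but invoking the horosphere result is cleaner and is the route I would take.
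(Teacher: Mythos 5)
Your argument is correct and is essentially identical to the paper's own proof: both reduce the claim via the Poisson-horofunction formula (Proposition \ref{omegahorofunction}) to showing $h_{\xi,p}(z)\to+\infty$ as $z\to\eta$, and both obtain this from Proposition \ref{horoonepoint}, the fact that horospheres meet $\partial^*D$ only at $\xi$. Your write-up simply makes the short contradiction argument explicit.
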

\proof	
By the Poisson-horofunction formula (Proposition \ref{omegahorofunction}), it is enough to show that, given a base-point $p\in D$ and a sequence $(z_n)$ converging to $\eta$, we have that $h_{\xi,p}(z_n)$ converges to $+\infty$.
This follows at once from Proposition \ref{horoonepoint}.
\endproof

Now, we analyze the behavior at the pole $\xi$. 
We  show   that $\Omega_\xi$ blows up on  sequences $K'$-converging to $\xi$ like the Poisson kernel of the half-plane  $\H:=\{\zeta\in\C:\Re\zeta<0\}$. 
Let $P\colon D\to \H$ be the orthogonal projection $P(z)=\langle z-\xi,n_\xi\rangle$, and   recall that the Poisson kernel of the left half-plane with pole at $0$ is given by
\begin{equation}\label{poissonsemipiano}\Omega_0^\H(\zeta)=2\frac{\Re\zeta}{|\zeta|^2}=2\Re\left(\frac{1}{\zeta}\right),\quad\zeta\in\H.
\end{equation}

\begin{proposition}\label{OmegaDvsH}
	Let $D\subset\C^n$ be a $\C$-proper convex domain and let  $\xi\in\partial D$ be a point of locally finite type. Then
	$$ K'\textrm{-}\lim_{z\to\xi}\frac{\Omega_\xi(z)}{\Omega_0^\H(\langle z-\xi,n_\xi\rangle)}=1.$$
\end{proposition}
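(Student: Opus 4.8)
The plan is to reduce the $K'$-limit statement to the one-dimensional computation of the Poisson kernel of the half-plane, using a complex geodesic with endpoint $\xi$ as a bridge, exactly as in Corollary~\ref{forPoigeo}, and then to control the error coming from the difference between $\varphi(\zeta_n)$ and $z_n$ via $K'$-convergence. First I would fix a base point and a complex geodesic $\varphi\colon\D\to D$ with endpoint $\xi$, normalized so that $\varphi'_N(1)=1$; by Corollary~\ref{forPoigeo} we then have $\Omega_\xi(\varphi(\zeta))=\Omega_1^\D(\zeta)$ for all $\zeta\in\D$. Next I would recall that for $\zeta\to 1$ non-tangentially one has the elementary asymptotics
\begin{equation}\label{disc-vs-halfplane}
\frac{\Omega_1^\D(\zeta)}{\Omega_0^\H(\zeta-1)}\xrightarrow[\zeta\to 1]{}1,
\end{equation}
which is the classical fact that the Cayley transform $\zeta\mapsto \zeta-1$ (to first order) intertwines the two Poisson kernels; concretely $\Omega_1^\D(\zeta)=-\frac{1-|\zeta|^2}{|1-\zeta|^2}$ and $\Omega_0^\H(\zeta-1)=2\Re\frac{1}{\zeta-1}=-\frac{1-|\zeta|^2}{|1-\zeta|^2}\cdot\frac{2}{|1+\zeta|^2}\cdot|1+\zeta|^2/2$, so the ratio is $|1+\zeta|^2/\bigl(2\,\cdots\bigr)$ — I would just carry out this short computation and note the ratio tends to $1$.

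Now let $(z_n)$ be a sequence $K'$-converging to $\xi$. By definition of $K'$-convergence there is a sequence $\zeta_n\to 1$ in $\D$, converging non-tangentially, with $k_D(z_n,\varphi(\zeta_n))\to 0$. I would write
\begin{equation}\label{three-factor}
\frac{\Omega_\xi(z_n)}{\Omega_0^\H(\langle z_n-\xi,n_\xi\rangle)}
=\frac{\Omega_\xi(z_n)}{\Omega_\xi(\varphi(\zeta_n))}\cdot
\frac{\Omega_1^\D(\zeta_n)}{\Omega_0^\H(\zeta_n-1)}\cdot
\frac{\Omega_0^\H(\zeta_n-1)}{\Omega_0^\H(\langle z_n-\xi,n_\xi\rangle)},
\end{equation}
using $\Omega_\xi(\varphi(\zeta_n))=\Omega_1^\D(\zeta_n)$. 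The middle factor tends to $1$ by \eqref{disc-vs-halfplane}. For the first factor I would use the Poisson-horofunction formula (Proposition~\ref{omegahorofunction}): $\log|\Omega_\xi(z_n)|-\log|\Omega_\xi(\varphi(\zeta_n))| = h_{\xi,\varphi(\zeta_n)}(z_n)$, and since the horofunction $h_{\xi,\cdot}(\cdot)$ is jointly continuous and $1$-Lipschitz in each entry with respect to $k_D$, while $k_D(z_n,\varphi(\zeta_n))\to 0$, this difference tends to $0$; hence the first factor tends to $1$. The remaining task, and the main obstacle, is the third factor: I must show $\langle z_n-\xi,n_\xi\rangle/(\zeta_n-1)\to 1$, i.e. that the normal component of $z_n-\xi$ is asymptotic to the normal component of $\varphi(\zeta_n)-\xi$, which in turn is $\sim \varphi'_N(1)(\zeta_n-1)=(\zeta_n-1)$ by the very definition of $\varphi'_N(1)$ as the non-tangential limit $\angle\lim_{\zeta\to 1}\langle\varphi'(\zeta),n_\xi\rangle$ (combined with $\langle\varphi(\zeta)-\xi,n_\xi\rangle\to 0$ and integration along the segment from $\zeta_n$ to $1$).

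The delicate point is thus comparing $\langle z_n-\xi,n_\xi\rangle$ with $\langle\varphi(\zeta_n)-\xi,n_\xi\rangle$ knowing only $k_D(z_n,\varphi(\zeta_n))\to 0$. Here I would invoke the standard two-sided estimate $-\log\delta_D(z)+C_1\le k_D(z,p)\le -\log\delta_D(z)+C_2$ together with the sharper Remark~\ref{kobestnormal} / Theorem~\ref{main 2}-type control: since $\delta_D(z_n)$ and $\delta_D(\varphi(\zeta_n))$ are comparable (their ratio is bounded because the Kobayashi distance between the points is bounded, by the lower estimate), and since for a convex domain the normal component $\Re\langle z-\xi,n_\xi\rangle=-\langle \xi-z,n_\xi\rangle$ is comparable to $\delta_D(z)$ up to bounded multiplicative constants governed by the geometry near $\xi$, one gets that $\langle z_n-\xi,n_\xi\rangle$ and $\langle\varphi(\zeta_n)-\xi,n_\xi\rangle$ are comparable; to upgrade ``comparable'' to ``asymptotic ratio $1$'' I would use that $k_D(z_n,\varphi(\zeta_n))\to 0$ forces, via the infinitesimal Kobayashi metric estimates on convex finite type domains (or simply via the already-proved continuity in Remark~\ref{kobestnormal} applied along the normal direction), the ratio of the normal components to tend to $1$. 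Alternatively, and perhaps more cleanly, I would note that $\Omega_0^\H$ applied to a point is, up to the factor $2$, $-1/\delta$-type quantity along the normal and that the whole statement is equivalent after taking logarithms to $k_D(z_n,p)+\log|\langle z_n-\xi,n_\xi\rangle|\to -\log\frac{|\Omega_\xi(p)|}{2}$, which is precisely the content of Remark~\ref{kobestnormal} extended from the normal approach to $K'$-approach — so the cleanest route is to prove that extension directly, replacing $\xi-tn_\xi$ by an arbitrary $K'$-converging sequence and repeating the three-factor argument of Theorem~\ref{derivataGreen} with $\sigma(t)$ replaced by $z_n$ and the strong asymptoticity input supplied by the definition of $K'$-convergence itself.
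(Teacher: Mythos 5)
Your reduction to the geodesic is sound and, for its first two factors, follows the paper's strategy: the estimate $\bigl|\log\frac{\Omega_\xi(z_n)}{\Omega_\xi(\varphi(\zeta_n))}\bigr|=|h_{\xi,z_n}(\varphi(\zeta_n))|\le k_D(z_n,\varphi(\zeta_n))\to 0$ is exactly what the paper does, and your second factor is a correct one-variable fact (it is Remark~\ref{omegaratiodiskH} applied to $f(\zeta)=\zeta-1$), even though the inline computation you sketch for it is garbled. The genuine gap is the third factor, which you flag as ``the main obstacle'' but never actually close. The Euclidean comparabilities you invoke cannot do the job: $\delta_D$-type estimates see only $\Re\langle z-\xi,n_\xi\rangle$, and only up to bounded multiplicative constants, whereas $\Omega_0^\H(w)=2\Re w/|w|^2$ is sensitive to the full complex value of $w$, in particular to $\Im\langle z_n-\xi,n_\xi\rangle$, which nothing in your argument controls; ``comparable'' cannot be upgraded to ``ratio tending to $1$'' by an unspecified appeal to infinitesimal Kobayashi estimates. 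Moreover your proposed ``cleanest route'' rests on a false equivalence: the statement $k_D(z_n,p)+\log|\langle z_n-\xi,n_\xi\rangle|\to-\log\frac{|\Omega_\xi(p)|}{2}$ fails for general $K'$-convergent sequences already in $D=\D$, $\xi=1$, $p=0$: for $z_n=1-(1+i)/n$ (a non-tangential, hence $K'$-convergent, sequence) the left-hand side tends to $\log(2\sqrt{2})$ while the right-hand side is $\log 2$. Remark~\ref{kobestnormal} and Theorem~\ref{Kobestimate} involve the purely normal approach, resp.\ $\log\delta_D(z)$, not $\log|\langle z-\xi,n_\xi\rangle|$ along arbitrary $K'$-sequences, so they do not give what you need.

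The missing idea --- and the way the paper closes precisely this step --- is to use that $P(z)=\langle z-\xi,n_\xi\rangle$ maps $D$ holomorphically into $\H$ by convexity, hence is Kobayashi non-expansive. Then the Poisson--horofunction identity in $\H$ yields
\begin{equation*}
\left|\log\frac{\Omega_0^\H(\langle z_n-\xi,n_\xi\rangle)}{\Omega_0^\H(\varphi_N(\zeta_n))}\right|
=\bigl|h^\H_{0,\langle z_n-\xi,n_\xi\rangle}(\varphi_N(\zeta_n))\bigr|
\le k_\H\bigl(\langle z_n-\xi,n_\xi\rangle,\varphi_N(\zeta_n)\bigr)\le k_D(z_n,\varphi(\zeta_n))\to 0,
\end{equation*}
which is your third factor once $\zeta_n-1$ is replaced by $\varphi_N(\zeta_n)$; the paper then concludes with Corollary~\ref{forPoigeo} and Remark~\ref{omegaratiodiskH} applied to $f=\varphi_N$, so it never needs to compare $\varphi_N(\zeta_n)$ with $\zeta_n-1$ at all. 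If you insist on your decomposition, you must actually prove $\langle z_n-\xi,n_\xi\rangle/(\zeta_n-1)\to 1$, and the natural proof of that again goes through this contraction-to-$\H$ argument (plus the definition of $\varphi'_N(1)$ as an angular derivative); as written, your proposal contains no proof of it.
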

Before proving Proposition \ref{OmegaDvsH} we need to recall some basic facts about angular derivatives of holomorphic functions.
\begin{definition}[Angular derivative]
Let $f\colon\D\to \C$ be a  holomorphic function. Let $\xi\in\partial\D$. We say that $f$ has finite \textit{angular derivative} $f'(\xi)$ at $\xi\in\partial\D$ if $f(\xi):=\angle\lim_{\zeta\to\xi}f(\zeta)$ exists finitely and 
$$f'(\xi):=\angle\lim_{\zeta\to\xi}\frac{f(\xi)-f(\zeta)}{\xi-\zeta}$$
exists finitely. Where, as customary, $\angle\lim$ denotes the  non-tangential limit. 
\end{definition}
By a version of the classical Julia-Wolff-Carath\'eodory's theorem (see, {\sl e.g.}, \cite[Theorem 1.7.2]{booksemigroup}), a holomorphic function $f\colon\D\to\C$ 
has finite angular derivative at $\xi\in\partial\D$ if and only if $\angle \lim_{\zeta\to\xi}f'(\zeta)$ exists finitely. If this is the case, 
$$f'(\xi)=\angle \lim_{\zeta\to\xi}f'(\zeta).$$

\begin{remark} 
Let $D\subset\C^n$ is a $\C$-proper convex domain and $\xi\in\partial D$ be a point of locally finite type. Let $\varphi\colon\D\to D$ be a complex geodesic with endpoint $\xi$. Then $\varphi_N'(1)$ is the angular derivative at $1\in\partial\D$ of the holomorphic function $\varphi_N\colon \D\to\H$ given by $\varphi_N:=P \circ \varphi=\langle \varphi-\xi,n_\xi\rangle$.
\end{remark}

\begin{remark}\label{ciserve}
Again by the Julia-Wolff-Carath\'eodory's theorem, 
a holomorphic function $f\colon\D\to\D$ 
has finite angular derivative at $\xi\in\partial\D$ if and only if $\alpha:=\liminf_{\zeta\to\xi}\frac{1-|f(\zeta)|}{1-|\zeta|}<+\infty$, and in this case
\begin{equation}\label{ntdildisk}
\alpha=\angle\lim_{\zeta\to\xi}\frac{1-|f(\zeta)|}{1-|\zeta|}=|f'(\xi)|.
\end{equation}
\end{remark}

\begin{lemma}\label{omegaratiodisk}
Let $f\colon\D\to\D$ be a holomorphic function and  let $\xi\in\partial\D$. Assume that $f$ has finite angular derivative at $\xi\in\partial\D$ and $f(\xi)\in\partial\D$, then
\begin{equation}\label{ntpoidisk}
\angle\lim_{\zeta\to\xi}\frac{\Omega^\D_\xi(\zeta)}{\Omega^\D_{f(\xi)}(f(\zeta))}=|f'(\xi)|.
\end{equation}
\end{lemma}
\proof
We have
$$\frac{\Omega^\D_\xi(\zeta)}{\Omega^\D_{f(\xi)}(f(\zeta))}=\frac{1-|\zeta|^2}{|\xi-\zeta|^2}\cdot\frac{|f(\xi)-f(\zeta)|^2}{1-|f(\zeta)|^2}=\frac{1+|\zeta|}{1+|f(\zeta)|}\cdot\frac{1-|\zeta|}{1-|f(\zeta)|}\cdot\left|\frac{f(\xi)-f(\zeta)}{\xi-\zeta}\right|^2.$$
As $\zeta$ tends to $1$ non-tangentially, the  first term tends to 1, the second term tends to $\frac{1}{|f'(\xi)|}$ by  (\refeq{ntdildisk}), and the third term tends to $|f'(\xi)|^2$.
\endproof 

\begin{remark}\label{omegaratiodiskH}
If $f\colon\D\to\H$ is a holomorphic function with finite angular derivative at $\xi\in\partial\D$ and $f(\xi)\in\partial\H$, then it immediately follows from (\refeq{ntpoidisk}) that
$$\angle\lim_{\zeta\to\xi}\frac{\Omega^\D_\xi(\zeta)}{\Omega^\H_{f(\xi)}(f(\zeta))}=|f'(\xi)|.$$
\end{remark}

\proof[Proof of Proposition \ref{OmegaDvsH}]
Let $\varphi\colon\D\to D$ be a complex geodesic with endpoint $\xi$.
Let $(z_n)$ be a sequence  $K'$-converging to $\xi$.  By definition there exists a sequence $(\zeta_n)$ in $\D$ such that $\zeta_n\to1$ non-tangentially and
$k_D(z_n,\varphi(\zeta_n))\to0.$
Notice that by the Poisson-horofunction formula (Proposition \ref{omegahorofunction}) and since $|h_{\xi,p}(z)|\leq k_D(z,p)$ for all $z,p\in D$, we have
$$\left|\log\frac{\Omega_\xi^D(z_n)}{\Omega_\xi^D(\varphi(\zeta_n))}\right|=\left|h^D_{\xi,z_n}(\varphi(\zeta_n))\right|\leq k_D(z_n,\varphi(\zeta_n))\to0,$$
thus
$$\frac{\Omega_\xi^D(z_n)}{\Omega_\xi^D(\varphi(\zeta_n))}\to1.$$
Define $\varphi_N\colon\D\to\H$  by $\varphi_N:=P \circ \varphi=\langle \varphi-\xi,n_\xi\rangle$.
 Similarly as before, using the non-expansivity of the Kobayashi distance,
\begin{equation*}
\begin{split}\left|\log\frac{\Omega_0^\H(\langle z_n-\xi,n_\xi\rangle)}{\Omega_0^\H( \varphi_N(\zeta_n))}\right|&=|h^\H_{0,{\langle z_n-\xi,n_\xi\rangle}}(\varphi_N(\zeta_n))|\\&\leq k_\H(\langle z_n-\xi,n_\xi\rangle,\varphi_N(\zeta_n))\leq k_D(z_n,\varphi(\zeta_n))\to0,
\end{split}
\end{equation*}
so that
$$\frac{\Omega_0^\H(\langle z_n-\xi,n_\xi\rangle)}{\Omega_0^\H( \varphi_N(\zeta_n))}\to1.$$
Hence, it is enough  to show
$$\lim_{n\to+\infty}\frac{\Omega^D_\xi(\varphi(\zeta_n))}{\Omega_0^\H(\varphi_N(\zeta_n))}=1.$$
By Corollary \ref{forPoigeo}, $$\Omega^D_\xi(\varphi(\zeta_n))=\frac{\Omega^\D_1(\zeta_n)}{\varphi'_N(1)},$$
and, by Remark \ref{omegaratiodiskH}, we have
$$\frac{\Omega_1^\D(\zeta_n)}{\Omega_0^\H(\varphi_N(\zeta_n))}\to\varphi_N'(1),$$
hence
$$\lim_{n\to+\infty}\frac{\Omega^D_\xi(\varphi(\zeta_n))}{\Omega_0^\H( \varphi_N(\zeta_n))}=\lim_{n\to+\infty}\frac{\Omega^D_\xi(\varphi(\zeta_n))}{\Omega^\D_1(\zeta_n)}\frac{\Omega^\D_1(\zeta_n)}{\Omega_0^\H(\varphi_N(\zeta_n))}=1.$$
\endproof
We are now able to describe the asymptotic behaviour of $\Omega_\xi$ on a family of curves with endpoint $\xi$.
\begin{corollary}\label{lemmaomegacurv}
Let $D\subset\C^n$ be a $\C$-proper convex domain and let $\xi\in\partial D$ be a point of locally finite  type. 
Let  $\gamma\colon[0,1)\to D$ be a curve  which $K'$-converges to $\xi$,   such that 
$$\gamma'_N(1):=\lim_{t\to1^-}\frac{\langle \xi-\gamma(t),n_\xi\rangle}{1-t}$$ exists finitely and is nonzero.  
Then
$$\lim_{t\to1^-}\Omega_{\xi}(\gamma(t))(1-t)=-\Re\frac{2}{\gamma_N'(1)}.$$
\end{corollary}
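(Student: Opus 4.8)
The plan is to compare the curve $\gamma$ with its projection to the half-plane $\H$ and reduce everything to Proposition~\ref{OmegaDvsH}. First I would write
\[
\Omega_\xi(\gamma(t))(1-t)=\frac{\Omega_\xi(\gamma(t))}{\Omega_0^\H(\langle\gamma(t)-\xi,n_\xi\rangle)}\cdot\Omega_0^\H(\langle\gamma(t)-\xi,n_\xi\rangle)(1-t).
\]
Since $\gamma$ $K'$-converges to $\xi$, Proposition~\ref{OmegaDvsH} gives that the first factor tends to $1$ as $t\to1^-$. So it remains to compute the limit of the second factor, which is a purely one-variable computation in $\H$.

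For the second factor, set $\zeta(t):=\langle\gamma(t)-\xi,n_\xi\rangle\in\H$. By hypothesis $\dfrac{\langle\xi-\gamma(t),n_\xi\rangle}{1-t}\to\gamma_N'(1)$, i.e. $\zeta(t)/(1-t)\to-\gamma_N'(1)$, and in particular $\zeta(t)\to 0$. Using the explicit formula \eqref{poissonsemipiano}, $\Omega_0^\H(\zeta)=2\Re(1/\zeta)$, so
\[
\Omega_0^\H(\zeta(t))(1-t)=2\Re\!\left(\frac{1}{\zeta(t)}\right)(1-t)=2\Re\!\left(\frac{1-t}{\zeta(t)}\right)\xrightarrow[t\to1^-]{}2\Re\!\left(\frac{1}{-\gamma_N'(1)}\right)=-\Re\frac{2}{\gamma_N'(1)},
\]
where passing the real part through the limit is justified because $\zeta(t)/(1-t)$ converges to the nonzero number $-\gamma_N'(1)$, so $(1-t)/\zeta(t)$ converges to $-1/\gamma_N'(1)$ in $\C$. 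Multiplying the two limits gives $\lim_{t\to1^-}\Omega_\xi(\gamma(t))(1-t)=1\cdot\bigl(-\Re\frac{2}{\gamma_N'(1)}\bigr)$, which is the claim.

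The only genuine input here is Proposition~\ref{OmegaDvsH}; everything else is elementary. The mild point to be careful about is that Proposition~\ref{OmegaDvsH} is stated as a $K'$-limit along sequences, so one should phrase the first step as: for every sequence $t_k\to1^-$, the points $\gamma(t_k)$ form a sequence $K'$-converging to $\xi$ (this is exactly the hypothesis that $\gamma$ $K'$-converges to $\xi$), hence $\Omega_\xi(\gamma(t_k))/\Omega_0^\H(\langle\gamma(t_k)-\xi,n_\xi\rangle)\to1$; since this holds for every such sequence, the limit along $t\to1^-$ exists and equals $1$. I do not anticipate any real obstacle: the result is essentially a corollary, as its placement suggests.
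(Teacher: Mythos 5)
Your proposal is correct and follows essentially the same route as the paper: the paper likewise computes $\Omega_0^\H(\langle\gamma(t)-\xi,n_\xi\rangle)(1-t)=2\Re\bigl(\tfrac{1-t}{\langle\gamma(t)-\xi,n_\xi\rangle}\bigr)\to-\Re\tfrac{2}{\gamma_N'(1)}$ and then invokes Proposition~\ref{OmegaDvsH} to replace $\Omega_0^\H$ by $\Omega_\xi$ along the $K'$-convergent curve. Your extra remarks (the explicit product decomposition, the nonvanishing of $\gamma_N'(1)$, and the sequence formulation of the $K'$-limit) are just spelled-out details of the same argument.
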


\proof
By \eqref{poissonsemipiano} we have
$$\Omega_0^\H(\langle\gamma(t)-\xi,n_\xi\rangle)(1-t)=2\Re\left(\frac{1-t}{\langle\gamma(t)-\xi,n_\xi\rangle}\right)\stackrel{t\to 1-}\longrightarrow -\Re\frac{2}{\gamma_N'(1)},$$
hence the result follows by  Proposition \ref{OmegaDvsH}.
\endproof

We have thus shown that $\Omega_\xi$ solves the Monge–Ampère equation~(\refeq{poissonnonintro}).

\begin{remark}
Notice that by (\refeq{varphi'_N(1)}) any complex geodesic with endpoint $\xi$ is  a curve satisfying the assumptions of the previous corollary when restricted to $[0,1).$
\end{remark}

\begin{remark}
The previous corollary does not hold if we only assume $K$-convergence of the curve $\gamma$ instead of $K'$-convergence.
	Take for example $D=\B^2$, $\lambda\in\D$ and $\gamma_\lambda:[0,1)\to\B^2$ given by $\gamma_\lambda(t)=(t,\lambda\sqrt{1-t^2})$. The curve $\gamma_\lambda$ has endpoint $e_1$ and $(\gamma'_\lambda)_N(1)=1$, but	$$\lim_{t\to1^-}\Omega^{\B^2}_{e_1}(\gamma(t))(1-t)=-2(1-|\lambda|^2).$$
\end{remark}

Arguing similarly as in \cite[Theorem 7.1 and Proposition 7.4]{BPT} one can prove the following

\begin{proposition}
	Let $D\subset\C^n$ be a $\C$-proper convex domain and let  $\xi\in\partial D$ point of locally finite  type. Let $u\colon D\to\R$ be a maximal plurisubharmonic function such that $\lim_{z\to\eta}u(z)=0$ for all $\eta\in\partial^* D\backslash\{\xi\}$.
\begin{itemize}
\item If  $\lim_{z\to\xi}\frac{u(z)}{\Omega_\xi(z)}=1,$ then $u\equiv\Omega_\xi$.
\item If  $u$ has the same level sets of $\Omega_\xi$ then there exists $c>0$ such that $u=c\Omega_\xi$.
\end{itemize}
\end{proposition}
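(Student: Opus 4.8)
The plan is to adapt the uniqueness and rigidity arguments of \cite[Theorem 7.1, Proposition 7.4]{BPT} to the present metric setting, the key new input being that we must replace the smoothness of complex geodesics at $\xi$ by the strong asymptoticity results of Section~3 together with the boundary estimate of Corollary~\ref{lemmaomegacurv}.

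\emph{Proof of the first statement.} Suppose $u$ is maximal plurisubharmonic, vanishes on $\partial^* D\setminus\{\xi\}$, and satisfies $\lim_{z\to\xi}u(z)/\Omega_\xi(z)=1$. First I would show $u\le \Omega_\xi$. Fix $\varepsilon>0$ and note that $(1+\varepsilon)\Omega_\xi$ is still maximal and is $\le u$ near $\xi$ by the hypothesis, while $u\to 0=(1+\varepsilon)\Omega_\xi$ on the rest of $\partial^* D$ (in the one-point compactification). The function $v:=u-(1+\varepsilon)\Omega_\xi$ is then plurisubharmonic on $D$ minus a neighbourhood of $\xi$ and $\limsup v\le 0$ on the boundary of that region; since $\Omega_\xi$ is maximal we want to use it as a barrier. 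The cleanest route is: for a horosphere $E_p(\xi,R)$, apply maximality of $\Omega_\xi$ on $D\setminus \overline{E_p(\xi,R)}$ — on $\partial E_p(\xi,R)$ we have $\limsup u\le (1+\varepsilon)\Omega_\xi$ for $R$ small (by the pole hypothesis and the fact, from Proposition~\ref{omegahorofunction}, that $\Omega_\xi$ is constant on horospheres), and on $\partial^* D$ both sides vanish; since $\overline{E_p(\xi,R)}^*\cap\partial^* D=\{\xi\}$ by Proposition~\ref{horoonepoint}, the comparison set is relatively compact in $D$ away from $\xi$ and the maximality of $\Omega_\xi$ gives $u\le (1+\varepsilon)\Omega_\xi$ on $D\setminus \overline{E_p(\xi,R)}$; letting $R\to 0$ and then $\varepsilon\to 0$ yields $u\le \Omega_\xi$ on all of $D$. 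For the reverse inequality one runs the symmetric argument with $(1-\varepsilon)\Omega_\xi$, now using that $u$ is maximal: on $D\setminus\overline{E_p(\xi,R)}$ the function $(1-\varepsilon)\Omega_\xi$ is $\le u$ on the boundary for small $R$, and maximality of $u$ forces $(1-\varepsilon)\Omega_\xi\le u$. Hence $u\equiv \Omega_\xi$.

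\emph{Proof of the second statement.} Suppose now $u$ is maximal plurisubharmonic, vanishes on $\partial^* D\setminus\{\xi\}$, and has the same level sets as $\Omega_\xi$. Then there is a strictly increasing continuous bijection $\chi\colon(-\infty,0)\to(-\infty,0)$ with $u=\chi\circ\Omega_\xi$ (one must check $\chi$ is well-defined, continuous, and extends with $\chi(0^-)=0$, using the prescribed boundary behaviour of both functions). Restricting to a complex geodesic $\varphi$ with endpoint $\xi$, Corollary~\ref{forPoigeo} gives $\Omega_\xi\circ\varphi=\Omega_1^\D/\varphi'_N(1)$, which is harmonic and hence $u\circ\varphi=\chi(\Omega_1^\D/\varphi_N'(1))$ is harmonic on $\D$ as well (since $u$ is maximal, its restriction to a complex geodesic is harmonic — or, at minimum, sub-harmonic, and a sub-level-set argument on the disc pins down harmonicity). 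A function of the harmonic Poisson kernel of the disc is itself harmonic only if the function is affine; composing with the Möbius action that moves $\varphi(0)$ around shows $\chi$ must be linear through the origin, i.e.\ $\chi(s)=cs$ for some $c>0$, giving $u=c\Omega_\xi$. Equivalently, and perhaps more robustly, one invokes Corollary~\ref{lemmaomegacurv}: along the normal approach $\sigma(t)=\xi+(t-1)\varphi_N'(1)n_\xi$ both $u(1-t)^{-1}$ and $\Omega_\xi(1-t)^{-1}$ have finite nonzero limits (for $u$ this follows from $u=\chi\circ\Omega_\xi$ and the known blow-up rate of $\Omega_\xi$ near $\xi$, forcing $\chi$ to be asymptotically linear at $0^-$), so $\lim_{z\to\xi}u(z)/\Omega_\xi(z)$ exists, equals some $c>0$; then $u/c$ satisfies the hypothesis of the first statement, whence $u=c\Omega_\xi$.

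\emph{Main obstacle.} The delicate point — and the reason the hypothesis of maximality (not just plurisubharmonicity) is needed — is the two-sided comparison near the pole $\xi$: the upper bound $u\le\Omega_\xi$ uses maximality of $\Omega_\xi$, but the lower bound $u\ge\Omega_\xi$ uses maximality of $u$, and both comparisons must be localized to regions $D\setminus\overline{E_p(\xi,R)}$ whose only boundary contact with $\partial^* D$ is the single point $\xi$ (Proposition~\ref{horoonepoint}) so that the maximality principle applies on a set that is, morally, compactly contained in $D$ away from $\xi$. Making this rigorous requires carefully exhausting $D$ by such regions and controlling the boundary values on $\partial E_p(\xi,R)$, where one leans on the fact that $\Omega_\xi$ is exactly constant on each horosphere. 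In the rigidity statement, the analogous subtlety is upgrading ``same level sets'' to ``$\chi$ asymptotically linear at $0^-$'', which again is where Corollary~\ref{lemmaomegacurv} and the non-tangential blow-up rate $\Omega_\xi(z)\approx -\|z-\xi\|^{-1}$ enter.
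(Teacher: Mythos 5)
In the first bullet your comparison constants are paired the wrong way, so the boundary inequalities you invoke near the pole are false. Since $\Omega_\xi<0$, the hypothesis $u/\Omega_\xi\to 1$ gives, on a neighbourhood of $\xi$, $(1+\varepsilon)\Omega_\xi\le u\le(1-\varepsilon)\Omega_\xi$. Hence your claim that $\limsup u\le(1+\varepsilon)\Omega_\xi$ on $\partial E_p(\xi,R)$ for small $R$ is the reverse of what the hypothesis yields (one actually has $u>(1+\varepsilon)\Omega_\xi$ there), and symmetrically ``$(1-\varepsilon)\Omega_\xi\le u$ on the boundary for small $R$'' fails. The correct pairing is: the upper bound $u\le\Omega_\xi$ comes from comparing $u$ against $(1-\varepsilon)\Omega_\xi$ (maximality of $\Omega_\xi$), the lower bound from comparing $(1+\varepsilon)\Omega_\xi$ against $u$ (maximality of $u$), then $\varepsilon\to0$; this is the BPT-type skeleton the paper intends, so the slip is repairable but, as written, the key steps do not hold. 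Moreover $D\setminus\overline{E_p(\xi,R)}$ is \emph{not} ``relatively compact in $D$ away from $\xi$'': Proposition~\ref{horoonepoint} says the horosphere itself clusters on $\partial^*D$ only at $\xi$, so its complement clusters on all of $\partial^*D$ (and also at $\xi$, along tangential sequences where $\Omega_\xi$ stays bounded). What legitimises the comparison is rather that, with the correct choice of $\varepsilon$, a set such as $\{u-(1-\varepsilon)\Omega_\xi\ge\delta\}$ is relatively compact in $D$ (near $\xi$ the hypothesis gives the inequality outright; at all other points of $\partial^*D$ both functions tend to $0$), after which Sadullaev's definition of maximality applies on a slightly larger $V\subset\subset D$ and one lets $\delta\to0$, $\varepsilon\to0$; the horospheres are not needed for this part.

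In the second bullet there is a genuine gap. The pivotal assertion ``since $u$ is maximal, its restriction to a complex geodesic is harmonic'' is false: a maximal plurisubharmonic function need not be harmonic on analytic discs, e.g.\ $(z_1,z_2)\mapsto|z_1|^2$ is maximal on the bidisc but restricts to $|\zeta|^2$ on $\zeta\mapsto(\zeta,0)$. From plurisubharmonicity of $u=\chi\circ\Omega_\xi$ and Corollary~\ref{forPoigeo} you only get that $\chi$ is convex, which by itself cannot force linearity. Your fallback route is circular: nothing in ``same level sets'' controls the blow-up rate of $u$ at $\xi$, i.e.\ the behaviour of $\chi(s)/s$ (and the relevant regime is $s\to-\infty$, not $s\to0^-$); worse, since $\Omega_\xi$ attains every value of $(-\infty,0)$ along suitable tangential sequences $z\to\xi$, verifying the unrestricted limit $\lim_{z\to\xi}u(z)/(c\Omega_\xi(z))=1$ already amounts to proving $\chi(s)=cs$ for every $s$, which is the conclusion. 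The place where maximality of $u$ must genuinely enter is missing: one needs either a Bedford--Taylor type computation showing that $(dd^c(\chi\circ\Omega_\xi))^n$ charges a term of the form $\chi''(\Omega_\xi)\,(dd^c\Omega_\xi)^{n-1}\wedge d\Omega_\xi\wedge d^c\Omega_\xi$, or a BPT-style comparison of $u$ with affine functions $a\Omega_\xi+b$ of $\Omega_\xi$ on shells $\{s_1<\Omega_\xi<s_2\}$, whose closures meet $\partial^*D$ only at $\xi$ by Proposition~\ref{horoonepoint}, treating the single point $\xi$ as a negligible boundary set for bounded plurisubharmonic functions. Without such an argument the second statement is not proved.
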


We end this section proving a generalized Phragmen--Lindel\"of theorem.
Let $\Gamma_\xi$ be the family of $C^1$-smooth curves $\gamma\colon [0,1)\to D$ 
which $K'$-converge to $\xi$,   such that $\gamma'_N(1):=\lim_{t\to1^-}\langle \gamma'(t),n_\xi\rangle$ exists, and is a strictly positive real number.
Notice that by (\refeq{varphi'_N(1)}) any complex geodesic with endpoint $\xi$ belongs to $\Gamma_\xi$ when restricted to $[0,1)$. 
Let $\mathscr{F}_\xi$ be the following family of functions 
\begin{equation}\label{MAmax}\begin{cases*}
		u\mbox{ is plurisubharmonic in }D,\\
		u<0 \ \mbox{ in }D,\\
		\limsup_{t\to1^-}u(\gamma(t))(1-t)\leq-\frac{2}{\gamma'_N(1)}, \ \ \forall \gamma\in\Gamma_\xi.
	\end{cases*}
\end{equation}

\begin{proposition}\label{maximalelement}
Let $D\subset\C^n$ be a $\C$-proper convex domain and let $\xi\in\partial D$ point of locally finite  type. 
Then $\Omega_\xi$ is the greatest element of the family $\mathscr{F}_\xi$.
	
\end{proposition}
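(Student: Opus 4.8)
The plan is to prove separately that $\Omega_\xi\in\mathscr F_\xi$ and that $u\le\Omega_\xi$ on $D$ for every $u\in\mathscr F_\xi$; the second inclusion is the Phragmen--Lindel\"of part and is modelled on \cite{BPT}. For the membership, note first that $\Omega_\xi$ is strictly negative by construction and plurisubharmonic by Proposition \ref{maximal}, so only the growth condition is at issue. If $\gamma\in\Gamma_\xi$, then $\gamma$ is $C^1$ on $[0,1)$, converges to $\xi$ (being $K'$-convergent), and $\langle\gamma'(\cdot),n_\xi\rangle$ has a finite limit at $1$; writing $\langle\xi-\gamma(t),n_\xi\rangle=\int_t^1\langle\gamma'(s),n_\xi\rangle\,ds$ and dividing by $1-t$ gives $\lim_{t\to1^-}\frac{\langle\xi-\gamma(t),n_\xi\rangle}{1-t}=\gamma'_N(1)\in(0,+\infty)$, so $\gamma$ fulfils the hypotheses of Corollary \ref{lemmaomegacurv} and $\lim_{t\to1^-}\Omega_\xi(\gamma(t))(1-t)=-\Re\tfrac{2}{\gamma'_N(1)}=-\tfrac{2}{\gamma'_N(1)}$. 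Hence the defining inequality of $\mathscr F_\xi$ holds (with equality), and $\Omega_\xi\in\mathscr F_\xi$.

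For the converse, fix $u\in\mathscr F_\xi$ and $z_0\in D$; I want $u(z_0)\le\Omega_\xi(z_0)$. By Remark \ref{remarkgeodesic} choose a complex geodesic $\varphi\colon\D\to D$ with $\varphi(0)=z_0$ and endpoint $\xi$, and set $v:=u\circ\varphi$, which is subharmonic and $<0$ on $\D$; if $v\equiv-\infty$ there is nothing to prove, so assume $v\not\equiv-\infty$. I would check that $\varphi|_{[0,1)}\in\Gamma_\xi$: it is $C^1$; it $K'$-converges to $\xi$ (using $\varphi$ itself as the reference geodesic, with $\zeta_n=t_n\to1^-$, and noting that $h_{\xi,z_0}(\varphi(t_n))+k_D(\varphi(t_n),z_0)$ stays bounded by Corollary \ref{forPoigeo} and Proposition \ref{omegahorofunction}); and $\lim_{t\to1^-}\langle\varphi'(t),n_\xi\rangle=\varphi'_N(1)>0$ by the Julia--Wolff--Carath\'eodory theorem applied to $\varphi_N=\langle\varphi-\xi,n_\xi\rangle$ together with \eqref{varphi'_N(1)}. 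Therefore $u\in\mathscr F_\xi$ gives $\limsup_{t\to1^-}(1-t)v(t)\le-\tfrac{2}{\varphi'_N(1)}$, and since $\Omega_\xi\circ\varphi=\Omega^\D_1/\varphi'_N(1)$ by Corollary \ref{forPoigeo}, it is enough to prove $v\le\Omega^\D_1/\varphi'_N(1)$ on $\D$ and evaluate at $0$.

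This reduces everything to a disc lemma: \emph{if $v$ is subharmonic, $v<0$, $v\not\equiv-\infty$ on $\D$ and $\limsup_{t\to1^-}(1-t)v(t)\le-a$ for some $a>0$, then $v\le\tfrac a2\,\Omega^\D_1$.} To prove it, let $h=P[\mu]$ be the least harmonic majorant of $v$, so that $\mu\le0$ is a finite Borel measure on $\partial\D$, $v\le h$, and $p:=h-v\ge0$ is a Green potential on $\D$. The quantities $(1-t)\tfrac{1-t^2}{|e^{i\theta}-t|^2}$ are bounded by $2$ and tend, as $t\to1^-$, to $2$ if $\theta=0$ and to $0$ otherwise, so dominated convergence gives $\lim_{t\to1^-}(1-t)h(t)=2\mu(\{1\})$. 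Using that $\liminf_{t\to1^-}(1-t)p(t)=0$ for every Green potential, one then gets $\limsup_{t\to1^-}(1-t)v(t)=2\mu(\{1\})$, and the hypothesis forces $\mu(\{1\})\le-\tfrac a2$. Writing $\mu=\mu(\{1\})\delta_1+\mu'$ with $\mu'\le0$ yields $h=\mu(\{1\})P(\cdot,1)+P[\mu']\le-\tfrac a2 P(\cdot,1)=\tfrac a2\Omega^\D_1$, hence $v\le h\le\tfrac a2\Omega^\D_1$. Applying this with $a=2/\varphi'_N(1)$ and evaluating at $0$ finishes the proof, as $z_0$ was arbitrary.

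The main obstacle is this disc lemma, and inside it the potential-theoretic fact that $\liminf_{t\to1^-}(1-t)p(t)=0$ for every Green potential $p$ on $\D$ --- equivalently, that the radial growth at a boundary point of a negative subharmonic function is entirely carried by its least harmonic majorant. This is delicate because a Green potential only has radial limit $0$ at \emph{almost every} boundary point, so one cannot simply invoke a Fatou-type theorem at the single point $1$; the statement at $1$ should instead be extracted directly, e.g.\ from the estimate $\int_0^1 G_\D(t,w)\,dt\lesssim(1-|w|)\log\tfrac{e}{1-|w|}$ together with a splitting of the Riesz mass of $p$ into a part supported away from $\partial\D$ (whose potential is $O(1-t)$ radially) and a part of arbitrarily small total $(1-|w|)$-weighted mass near $\partial\D$. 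Everything else --- the membership $\Omega_\xi\in\mathscr F_\xi$, the verification $\varphi|_{[0,1)}\in\Gamma_\xi$, and the bookkeeping with the Riesz decomposition --- is routine.
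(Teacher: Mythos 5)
Your argument follows the same route as the paper: membership of $\Omega_\xi$ in $\mathscr{F}_\xi$ via Corollary \ref{lemmaomegacurv} (your bridge between the derivative limit $\lim_{t\to1^-}\langle\gamma'(t),n_\xi\rangle$ and the difference-quotient limit used in that corollary is exactly the point that needs saying, and your integral argument is fine), and for the converse the restriction of $u$ to a complex geodesic $\varphi$ with $\varphi(0)=z_0$ and endpoint $\xi$, the verification that $\varphi|_{[0,1)}\in\Gamma_\xi$ (which the paper records in the remark after \eqref{MAmax}, using \eqref{varphi'_N(1)}), followed by a one-variable Phragmen--Lindel\"of lemma on $\D$ and evaluation at $0$ via Corollary \ref{forPoigeo}. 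The only genuine difference is that the paper simply cites the classical disc lemma (\cite[Lemma 5.2]{BPT}), whereas you re-prove it through the Riesz decomposition $v=P[\mu]-p$ with $\mu\le 0$ and $p$ a Green potential; your computation $\lim_{t\to1^-}(1-t)P[\mu](t)=2\mu(\{1\})$ and the deduction $v\le \mu(\{1\})P(\cdot,1)\le\frac a2\,\Omega^\D_1$ are correct. What buys you self-containedness costs you the sub-lemma $\liminf_{t\to1^-}(1-t)p(t)=0$ for every Green potential $p$, which you rightly flag as the crux. That statement is true (it is a classical consequence of the Fatou--Na\"im--Doob theorem: the minimal fine limit at $1$ of $p/P(\cdot,1)$ vanishes, and the radius is not minimally thin at its endpoint), but the derivation you sketch does not quite close as written: the estimate $\int_0^1G_\D(t,w)\,dt\lesssim(1-|w|)\log\frac{e}{1-|w|}$ carries a logarithm, and the near-boundary part of the Riesz mass can be chosen with small $(1-|w|)$-weighted mass while its $\log$-weighted mass is infinite, so smallness of $\int_0^1 p_2(t)\,dt$ cannot be extracted this way. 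The repair is to average over dyadic subintervals $I_k=[1-2^{-k},1-2^{-k-1}]$ instead of the whole radius: one has $\int_{I_k}G_\D(t,w)\,dt\lesssim\min\bigl\{\,1-|w|,\ 2^{-2k}(1-|w|)^{-1}\bigr\}$, whence $\int_{I_k}p_2(t)\,dt\lesssim\int_\D(1-|w|)\,d\nu_2(w)$ uniformly in $k$, so the mean of $(1-t)p_2(t)$ on each $I_k$ is $\lesssim\varepsilon$ and, combined with $(1-t)p_1(t)=O((1-t)^2)$, one gets points $t_k\in I_k$ with $(1-t_k)p(t_k)\lesssim\varepsilon$; alternatively, just quote the classical lemma as the paper does. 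With that sub-step settled, your proof is complete.
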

\proof
By Corollary \ref{lemmaomegacurv} $\Omega_\xi$  belongs to the family $\mathscr{F}_\xi$.
Let $u\in \mathscr{F}_\xi$, and let  $z\in D$.  Let $\varphi\colon\D\to D$ be a complex geodesic with $\varphi(0)=z$ and $\varphi(1)=\xi$.
Notice that the function $u\circ\varphi$ is  subharmonic on $\D$ and strictly negative. Since  the curve $t\mapsto\varphi(t)$ belongs to  the family $\Gamma_\xi$, we have
$$\limsup_{t\to1^-}u(\varphi(t))(1-t)\leq-\frac{2}{\varphi'_N(1)}=2\Omega_\xi(z).$$
By the classical Phragmen-Lindel\"of theorem (see e.g. \cite[Lemma 5.2]{BPT}),  for all $\zeta\in\D$
$$u(\varphi(\zeta))\leq|\Omega_\xi(z)|\Omega^\D_1(\zeta).$$
In particular if $\zeta=0$ we have
$u(z)\leq \Omega_\xi(z).$
\endproof

\section{A new estimate for the Kobayashi distance}

 In this section we  prove the following asymptotic estimate of the Kobayashi distance in terms of the pluricomplex Poisson kernel, which appears to be new also in the case of strongly convex domains.
Beside being interesting  in its own right, this estimate is the crucial ingredient (together with Demailly's theory \cite{Demailly}) in the proof of the  reproducing formula in Section \ref{sectDem}.

\begin{theorem}\label{Kobestimate}Let $D\subset\C^n$ be a $\C$-proper convex domain and let  $\xi\in\partial D$ be a point of locally finite type. Let $p\in D$, then
	
	$$\lim_{z\to\xi}[k_D(z,p)+\log\delta_D(z)]=-\log\frac{|\Omega_\xi(p)|}{2}.$$
\end{theorem}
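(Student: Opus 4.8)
The plan is to reduce the general limit to the normal-direction estimate already recorded in Remark \ref{kobestnormal}, namely that for all $p\in D$ one has $\lim_{t\to 0^+}[k_D(\xi-tn_\xi,p)+\log t]=-\log(|\Omega_\xi(p)|/2)$. The link between $\delta_D(z)$ and a point on the inner normal segment through $\xi$ is made by considering, for $z$ near $\xi$, the foot point $\pi(z)\in\partial D$ realizing $\delta_D(z)=\|z-\pi(z)\|$; since $\partial D$ is $C^L$ with $L\ge 2$, $\pi(z)\to\xi$ and $n_{\pi(z)}\to n_\xi$ as $z\to\xi$. The natural comparison point is then $q(z):=\xi-\delta_D(z)n_\xi$, which sits on the inner normal segment at $\xi$ at Euclidean distance exactly $\delta_D(z)$ from $\xi$. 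For this point Remark \ref{kobestnormal} gives directly $k_D(q(z),p)+\log\delta_D(z)\to -\log(|\Omega_\xi(p)|/2)$, so it suffices to prove that $k_D(z,q(z))\to 0$ as $z\to\xi$.

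First I would establish this last claim. Both $z$ and $q(z)$ are at Euclidean distance comparable to $\delta_D(z)$ from the boundary point $\xi$ (indeed $\delta_D(q(z))=\delta_D(z)$, and $z$ lies within distance $o(\delta_D(z))$ of the normal line through $\xi$ by $C^2$-smoothness of $\partial D$, since the boundary near $\xi$ is a graph with vanishing first order term over $T_\xi\partial D$). A clean way to bound $k_D(z,q(z))$ is to place both points inside a small ball $B(\xi-cn_\xi,r)\subset D$ of radius $r\asymp \delta_D(z)$ centered on the normal line — convexity plus $C^2$-smoothness guarantees an interior ball of radius proportional to $\delta_D(z)$ tangent near $\xi$ — and use the decreasing property of the Kobayashi distance together with the explicit Kobayashi distance of a ball: two points at distance $O(\delta_D(z))$ from each other, both at distance $\gtrsim \delta_D(z)$ from the boundary of that ball, have Kobayashi distance $O(1)$ in the ball, which is not yet enough. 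So instead I would argue more carefully: rescale by the dilation $\Lambda_z$ normalizing $\delta_D(z)$ to $1$ (the scaling maps used in \cite{Zim,GausZim} and invoked in Section \ref{sectiondilation}), under which $z$ and $q(z)$ both converge to the same point in the normalized limit domain, and Kobayashi distances converge, forcing $k_D(z,q(z))\to 0$. Alternatively, and perhaps most simply, one observes that $z$ $K'$-converges to $\xi$ iff it converges non-tangentially, and more to the point one can route through the inner normal segment $\sigma$ and strong asymptoticity (Theorem \ref{strgasysegm}) exactly as in the proof of Theorem \ref{derivataGreen}.

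Thus the key steps, in order, are: (i) introduce $q(z)=\xi-\delta_D(z)n_\xi$ and record that Remark \ref{kobestnormal} handles $k_D(q(z),p)+\log\delta_D(z)$; (ii) prove $k_D(z,q(z))\to 0$ as $z\to\xi$ by a scaling/comparison argument using $C^2$-smoothness and convexity (an interior ball of radius $\asymp\delta_D(z)$, or the dilation technique); (iii) combine via the triangle inequality $|k_D(z,p)-k_D(q(z),p)|\le k_D(z,q(z))\to 0$. The main obstacle is step (ii): one must control $k_D(z,q(z))$ uniformly well enough that it actually tends to $0$ (not merely stays bounded), which genuinely uses that $\|z-q(z)\|=o(\delta_D(z))$ — a consequence of the vanishing of the first-order Taylor term of the boundary defining function at $\xi$ — rather than merely $\|z-q(z)\|=O(\delta_D(z))$. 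Once that refined estimate is in hand, the ball-comparison bound $k_D(z,q(z))\le k_{B}(z,q(z))\to 0$ closes the argument.
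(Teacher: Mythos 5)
The central step (ii) of your plan is false, and this is a genuine gap rather than a fixable technicality. You claim that $k_D(z,q(z))\to 0$ for $q(z):=\xi-\delta_D(z)n_\xi$, justified by the assertion that $z$ lies within distance $o(\delta_D(z))$ of the normal line at $\xi$. Nothing of the sort holds for a general approach $z\to\xi$: the theorem allows tangential approach, and along a tangential curve one typically has $\delta_D(z)=o(\|z-\xi\|)$, so $\|z-q(z)\|\geq \|z-\xi\|-\delta_D(z)$ is not even $O(\delta_D(z))$. Worse, the claim already fails for non-tangential approach in the simplest model: in the half-plane $\H$ with $\xi=0$, $n_\xi=1$, taking $z=-t+it$ gives $q(z)=-t$, and $k_\H(z,q(z))$ is a positive constant independent of $t$; it stays bounded but does not tend to $0$. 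The same computation in the disc shows $k_\D(z,|z|)$ is bounded away from $0$ along non-radial rays. Consequently neither the interior-ball comparison nor the rescaling argument can ``force'' $k_D(z,q(z))\to0$: under the natural scaling the images of $z$ and $q(z)$ converge to two \emph{distinct} points of the limit domain. Your fallback remarks do not repair this either: non-tangential convergence implies $K'$-convergence but not conversely, a general sequence $z\to\xi$ need not be $K'$-convergent at all, and Theorem \ref{strgasysegm} compares a complex geodesic with the normal segment in a specific parametrization, so it says nothing about $k_D(z,q(z))$ for arbitrary $z$.

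The paper's proof is built precisely to avoid this trap: it never claims the correction term is small, only that it is bounded above by a quantity that cancels. For the lower bound it projects $z$ to its \emph{nearest} boundary point $\pi(z)$ (not to $\xi$), chooses $w$ on the normal segment at $\pi(z)$ where Remark \ref{kobestnormal} applies, and bounds $k_D(z,w)\le\log\frac{\delta_D(z)}{\delta_D(w)}+\varepsilon$ via Lemma \ref{lemmaestimate}; this $\log$-ratio is then absorbed by the $\log\delta_D$ terms, and continuity of the kernel in the pole (Proposition \ref{contpole}) becomes essential because the relevant pole is $\pi(z)\to\xi$ rather than $\xi$ itself. For the upper bound it first treats $K'$-convergent sequences using a complex geodesic through $p$, Mercer's inequality and Lemma \ref{lemmaexclaim}, and then reduces the general case to the non-tangential one via the comparison points $w_n=\pi(z_n)-t_nn_{\pi(z_n)}$ with $t_n=\max\{\|\xi-\pi(z_n)\|,\delta_D(z_n)\}$, again absorbing $k_D(z_n,w_n)$ by the $\log$-ratio from Lemma \ref{lemmaestimate}. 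To salvage your plan you would have to replace ``the correction tends to $0$'' by this cancellation mechanism, and anchor the normal segment at $\pi(z)$ rather than at $\xi$, which in turn requires proving the continuity of $\xi\mapsto\Omega_\xi(p)$ — a step entirely missing from your proposal.
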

 \begin{remark}
Notice that, if one knows that for all $z\in D$ the limit $$h(z):=\lim_{x\to\xi}[k_D(x,z)+\log\delta_D(x)]$$ exists, then it is immediate to see that the function $h\colon D\to \R$ is a horofunction centered at $\xi$. Indeed $$h(z)-h(w)=\lim_{x\to\xi}[k_D(z,x)+\log\delta_D(x)-k_D(w,x)-\log\delta_D(x)]=h_{\xi,w}(z).$$
\end{remark}
We start by showing that  $\Omega_\xi$ is continuous  with respect to the pole $\xi$.  	In the bounded $C^\infty$-smooth strongly convex case, the function $(z,\xi)\mapsto \Omega_\xi(z)$ is actually 	$C^\infty$-smooth on $ D\times \partial D$
thanks to regularity of the boundary spherical representation \cite{CHL,BPT}.  
\begin{proposition}\label{contpole}
	Let $D\subset\C^n$ be a $\C$-proper convex domain and let  $\xi_0\in\partial D$ be a point of locally finite type. Then there exists a neighborhood $U$ of $\xi_0$  such that
	$(z,\xi)\mapsto \Omega_\xi(z)$ is a continuous function on $ D\times  (\partial D\cap U)$.

\end{proposition}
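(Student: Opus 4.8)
The idea is to reduce continuity in $\xi$ to a stability property of complex geodesics and of the quantity $\varphi'_N(1)$ under perturbation of the endpoint, via a normal-family argument. Fix $z_0\in D$ and a sequence $(\xi_k)\subset \partial D\cap U$ with $\xi_k\to\xi_0$, together with $z_k\to z_0$ in $D$; I want to show $\Omega_{\xi_k}(z_k)\to\Omega_{\xi_0}(z_0)$. For each $k$ let $\varphi_k\colon\D\to D$ be a complex geodesic with $\varphi_k(0)=z_k$ and endpoint $\xi_k$ (these exist by Remark \ref{remarkgeodesic}(3), after shrinking $U$ so that every point of $\partial D\cap U$ is of locally finite type). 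Since complex geodesics are holomorphic maps into the fixed domain $D$, and $\varphi_k(0)=z_k$ stays in a compact subset of $D$, Montel's theorem gives a subsequence $\varphi_k\to\varphi$ locally uniformly, and $\varphi$ is again a complex geodesic (the isometry condition $k_D(\varphi_k(\zeta_1),\varphi_k(\zeta_2))=k_\D(\zeta_1,\zeta_2)$ passes to the limit by continuity of $k_D$) with $\varphi(0)=z_0$. The key point is that $\varphi$ has endpoint $\xi_0$: this should follow because $\widetilde\varphi_k(t)=\varphi_k(\tanh(t/2))$ converges to $\widetilde\varphi(t)$, each $\widetilde\varphi_k(t)\to\xi_k$ as $t\to+\infty$, and $\xi_k\to\xi_0$, so the limit geodesic ray must exit $D$ at $\xi_0$ — here one uses that a geodesic ray in a $\C$-proper convex domain has a unique boundary endpoint and that the endpoint depends suitably on the ray.

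The next, and main, step is to show $\varphi'_{k,N}(1)\to\varphi'_N(1)$, i.e. the normal component of the boundary derivative is stable. I would deduce this from the horofunction description in Proposition \ref{horoderivatives}: fixing an auxiliary complex geodesic $\psi$ with endpoint $\xi_0$ and base point $z_0$ (so $\psi'_N(1)$ serves as a normalization), the relation $h_{\xi_0,\psi(0)}(\varphi(0))=\log(\varphi'_N(1)/\psi'_N(1))$ together with continuity of the horofunction in its center $\xi$ — which is exactly what needs to be established, and is the crux — would give the claim. Concretely, $-\log|\Omega_{\xi}(z)|=h_{\xi,p}(z)+\mathrm{const}$, and the horofunction $h_{\xi,p}(z)=\lim_{w\to\xi}[k_D(z,w)-k_D(w,p)]$ is a limit of functions jointly continuous in $(z,\xi)$ on $D\times U$; I would show the convergence is locally uniform in $\xi$ as well, using the strong asymptoticity estimates of \cite{Arosio-Fiacchi} (Theorem \ref{strgasy}) with uniform constants for endpoints in a small compact neighborhood $\partial D\cap U$ of $\xi_0$. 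This uniformity — that the rate in $k_D(\widetilde\varphi(t),\widetilde\psi(t+T))\to 0$ can be taken locally uniform in the endpoint — is precisely the technical heart, and the reason the statement is only local around $\xi_0$.

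Granting locally uniform convergence of $w\mapsto k_D(\cdot,w)-k_D(w,p)$ as $w\to\xi$, uniformly for $\xi\in\partial D\cap U$, the limit $h_{\xi,p}(z)$ is continuous on $D\times(\partial D\cap U)$ as a uniform limit of continuous functions, and then $\Omega_\xi(z)=-\exp(-h_{\xi,p}(z)+h_{\xi,p}(z_{\ast}))$ for a reference point, hence continuous. Equivalently, returning to the subsequence argument: $\varphi'_{k,N}(1)\to\varphi'_N(1)>0$ gives $\Omega_{\xi_k}(z_k)=-1/\varphi'_{k,N}(1)\to-1/\varphi'_N(1)=\Omega_{\xi_0}(z_0)$, and since every subsequence has a further subsequence with this limit, the full sequence converges. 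The obstacle to watch is twofold: first, justifying that the limit geodesic $\varphi$ really lands at $\xi_0$ and not at some other boundary point (non-uniqueness of geodesics makes one careful here, but the endpoint of the limit ray is forced); second, upgrading the strong-asymptoticity convergence of \cite{Arosio-Fiacchi} to be locally uniform in the endpoint $\xi$ — I expect this follows by inspecting that the estimates there depend only on the local geometry of $\partial D$ near $\xi$, hence are stable under small perturbations of $\xi$ within $\partial D\cap U$.
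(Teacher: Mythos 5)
Your setup (normal families/tautness, a limit complex geodesic $\varphi$ with $\varphi(0)=z_0$ and endpoint $\xi_0$) matches the paper's first step, and you correctly identify the crux: proving $\varphi'_{k,N}(1)\to\varphi'_N(1)$, equivalently continuity of the horofunction in its center. But at exactly that point your argument has a genuine gap. Locally uniform convergence of the geodesics only gives \emph{one-sided} control of the boundary derivative: by Julia's lemma (this is Lemma \ref{liminfdila} in the paper, applied to the normal components $\zeta\mapsto\langle\varphi_k(\zeta)-\xi_k,n_{\xi_k}\rangle$ after a Cayley transform) one gets $\liminf_k(\varphi_k)'_N(1)\geq\varphi'_N(1)$, i.e.\ $\liminf_k\Omega_{\xi_k}(z_k)\geq\Omega_{\xi_0}(z_0)$, and nothing more. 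To close the other inequality you propose to upgrade the strong asymptoticity of \cite{Arosio-Fiacchi} to be locally uniform in the endpoint $\xi$, but you do not prove this: you only state that you ``expect'' it follows by inspecting that the estimates depend on the local geometry of $\partial D$. That uniformity is a nontrivial quantitative strengthening of Theorem \ref{strgasy} which is nowhere established (neither by you nor in the cited paper), so as written the reduction is circular: continuity of $\Omega_\xi$ in $\xi$ is equivalent, via the Poisson--horofunction formula, to the joint continuity of $h_{\xi,p}$ you are assuming.

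The paper avoids this entirely by a two-sided semicontinuity argument that never needs uniform-in-$\xi$ asymptotics. The $\liminf$ bound is obtained as above. For the $\limsup$ bound, one takes holomorphic left inverses $\tilde\rho_k\colon D\to\D$ of the geodesics $\varphi_k$, restricts them to the inner normal discs $\zeta\mapsto\xi_k+rn_{\xi_k}(\zeta-1)$ (using a uniform interior ball $B(\xi_k-rn_{\xi_k},r)\subseteq D$, available since $\partial D$ is $C^2$ near $\xi_0$), and observes via the Julia--Wolff--Carath\'eodory theorem of \cite{Arosio-Fiacchi} that the resulting self-maps of $\D$ have angular derivative $r|\Omega_{\xi_k}(z_k)|$ at $1$; applying Lemma \ref{liminfdila} to these maps (which, by tautness, converge to the corresponding map built from a left inverse of the limit geodesic, with angular derivative $r|\Omega_{\xi_0}(z_0)|$) gives $\limsup_k\Omega_{\xi_k}(z_k)\leq\Omega_{\xi_0}(z_0)$. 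This use of the left inverses --- whose boundary derivatives are reciprocal to those of the geodesics, so that lower semicontinuity of angular derivatives yields the reverse inequality for $\Omega$ --- is the missing idea in your proposal; without it (or without an actual proof of the uniform strong asymptoticity you invoke), your argument does not go through.
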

We first need a lemma.

\begin{lemma}\label{liminfdila}
Let $(f_n\colon\D\to\D)$ be a sequence of holomorphic functions converging uniformly on compact subsets to $f\colon \D\to\D$. Assume that there exist $\xi,\eta\in\partial\D$ such that for all $n\geq 0$, $\angle \lim_{\zeta\to\xi }f_n(\zeta)=\eta$ with finite angular derivative $f_n'(\xi)$. Assume that $$\liminf_{n\to+\infty}|f_n'(\xi)|<+\infty.$$ Then $\angle \lim_{\zeta\to\xi}f(\zeta)=\eta$, $f$ has finite angular derivative at $\xi$, and
$$|f'(\xi)|\leq\liminf_{n\to+\infty}|f_n'(\xi)|.$$
\end{lemma}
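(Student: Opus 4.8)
The plan is to use the Julia--Wolff--Carath\'eodory machinery in the disc, specifically the characterization that a holomorphic self-map of $\D$ has finite angular derivative at $\xi$ if and only if the Julia quotient $\frac{1-|f(\zeta)|}{1-|\zeta|}$ is bounded along sequences approaching $\xi$ non-tangentially, together with the fact (recalled in the excerpt after the definition of angular derivative) that $f'(\xi) = \angle\lim_{\zeta\to\xi} f'(\zeta)$ when the angular derivative exists. The key obstruction to transferring these properties under locally uniform limits is semicontinuity: the Julia quotient can only drop in the limit, which is exactly what the asserted inequality $|f'(\xi)| \le \liminf_n |f_n'(\xi)|$ encodes.

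First I would invoke the Julia lemma: for each $n$, the hypothesis that $f_n$ has finite angular derivative at $\xi$ with $f_n(\xi)=\eta$ is equivalent to saying that the horocycle $E^\D(\xi, R) := \{\zeta : \frac{|\xi-\zeta|^2}{1-|\zeta|^2} < R\}$ is mapped by $f_n$ into the horocycle $E^\D(\eta, |f_n'(\xi)|\, R)$ for every $R>0$. Set $\Lambda := \liminf_n |f_n'(\xi)|$; passing to a subsequence we may assume $|f_n'(\xi)| \to \Lambda$ (if $\Lambda = +\infty$ there is nothing to prove, since then the inequality is automatic — though one should note $\Lambda<\infty$ can be assumed by choosing the subsequence realizing the liminf). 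Now fix $\zeta \in E^\D(\xi, R)$. For $n$ large enough $\zeta$ lies in this fixed horocycle, so $f_n(\zeta) \in E^\D(\eta, |f_n'(\xi)| R)$; since $f_n(\zeta) \to f(\zeta)$ and the defining inequality of a horocycle is a closed condition in the limit, we get $f(\zeta) \in \overline{E^\D(\eta, \Lambda R)}$, i.e. $\frac{|\eta - f(\zeta)|^2}{1-|f(\zeta)|^2} \le \Lambda R$ whenever $\frac{|\xi-\zeta|^2}{1-|\zeta|^2} < R$. Letting $R$ vary, this says precisely that $f$ maps each horocycle at $\xi$ into the corresponding closed horocycle at $\eta$ of radius scaled by $\Lambda$.

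From this containment the conclusion follows from the converse direction of the Julia--Wolff--Carath\'eodory theorem. A standard consequence of $f(E^\D(\xi,R)) \subset \overline{E^\D(\eta,\Lambda R)}$ for all $R$ is that $\angle\lim_{\zeta\to\xi} f(\zeta) = \eta$ and that $\limsup_{\zeta\to\xi,\ \mathrm{n.t.}} \frac{1-|f(\zeta)|}{1-|\zeta|} \le \Lambda < \infty$; by the Julia--Wolff--Carath\'eodory theorem in the form cited in the excerpt (see \cite[Theorem 1.7.2]{booksemigroup}) this is equivalent to $f$ having finite angular derivative at $\xi$, with $|f'(\xi)|$ equal to that $\limsup$ — indeed equal to the nontangential limit of $\frac{1-|f(\zeta)|}{1-|\zeta|}$, which is $\le \Lambda$. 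Hence $|f'(\xi)| \le \Lambda = \liminf_n |f_n'(\xi)|$, as claimed.

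The main obstacle, and the place requiring care, is the passage to the limit in the horocycle containment: one must check that the inequality defining $E^\D(\eta, |f_n'(\xi)|R)$ survives under the double limit in $n$ (both $f_n(\zeta)\to f(\zeta)$ and $|f_n'(\xi)|\to\Lambda$), which is routine since the left-hand side $\frac{|\eta-w|^2}{1-|w|^2}$ is continuous in $w\in\D$ and the right-hand side is linear in the radius; and that we are allowed to replace the strict horocycle on the target by its closure. A minor secondary point is that, to reach the sharp constant $\Lambda = \liminf |f_n'(\xi)|$ rather than merely $\limsup$, one passes at the outset to the subsequence along which $|f_n'(\xi)|$ converges to its liminf; this is legitimate because the containment $f(E^\D(\xi,R))\subset\overline{E^\D(\eta,\Lambda R)}$ obtained from that subsequence already pins down $f'(\xi)$ unambiguously, the left inverse $f$ being fixed.
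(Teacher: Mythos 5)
Your argument is correct and is essentially the paper's own proof: both apply Julia's Lemma to each $f_n$ (you phrase it as the horocycle containment $f_n(E(\xi,R))\subset E(\eta,|f_n'(\xi)|R)$, the paper writes the equivalent inequality $\frac{|\eta-f_n(\zeta)|^2}{1-|f_n(\zeta)|^2}\leq |f_n'(\xi)|\frac{|\xi-\zeta|^2}{1-|\zeta|^2}$), pass to the limit along a subsequence realizing $\liminf_n|f_n'(\xi)|$, and then conclude finiteness of $f'(\xi)$ and the bound $|f'(\xi)|\leq\liminf_n|f_n'(\xi)|$ via the Julia--Wolff--Carath\'eodory characterization of the angular derivative as the boundary dilation coefficient. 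The only cosmetic difference is that the paper finishes by comparing with the radial quotient $\frac{1-|f(t)|}{1-t}$ and invoking its formula \eqref{ntdildisk}, while you quote the JWC equivalence directly; both implicitly assume the liminf is finite, as does the paper.
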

\proof
Up to conjugation, we may assume that $\xi,\eta=1$, then by the classical Julia-Wolff-Carath\'eodory Theorem  we have that $f_n'(1)$ are real positive numbers.  By  Julia's  Lemma (see, {\sl e.g.}, \cite[Theorem~1.4.7]{BCD}) we have, for all $n\geq 0$ and $\zeta\in\D$,
$$\frac{|1-f_n(\zeta)|^2}{1-|f_n(\zeta)|^2}\leq f'_n(1)\frac{|1-\zeta|^2}{1-|\zeta|^2}.$$
Let $(n_k)$ be a sequence converging to $\infty$ such that  $f_{n_k}'(1)\to a:=\liminf_{n\to+\infty}f'_n(1)<+\infty$. Then passing to the limit we have for all $\zeta\in\D$
$$\frac{|1-f(\zeta)|^2}{1-|f(\zeta)|^2}\leq a\frac{|1-\zeta|^2}{1-|\zeta|^2},$$
which implies that $\angle \lim_{\zeta\to1}f(\zeta)=1$.
Notice that $\frac{|1-f(\zeta)|^2}{1-|f(\zeta)|^2}\geq\frac{1-|f(\zeta)|}{1+|f(\zeta)|}$ for all $\zeta\in \D$. Thus for  $t\in(0,1)$
$$\frac{1-|f(t)|}{1-t}\leq a\frac{1+|f(t)|}{1+t}.$$
Hence $\liminf_{\zeta\to1^-}\frac{1-|f(\zeta)|}{1-|\zeta|}\leq a$, so by Remark \ref{ciserve} the function $f$ has finite angular derivative at $1$ and $f'(1)\leq a$.
\endproof

\begin{proof}[Proof of Proposition \ref{contpole}] 
By definition there exist $L\geq 2$ and  a neighborhood $U$ of $\xi$ such that $\partial D\cap U$ is of class $C^L$ and every point in $\partial D\cap U$ has line type at most $L$. We need to show that if $z_n\to z$ and $\xi_n\to\xi\in  \partial D\cap U$, then
$\Omega_{\xi_n}(z_n)\to\Omega_{\xi}(z).$
For all $n\geq 0$, let $\varphi_n\colon\D\to D$ be a complex geodesic such that $\varphi_n(0)=z_n$ and $\varphi_n(1)=\xi_n$, then by definition 
$$\Omega_{\xi_n}(z_n)=-\frac{1}{(\varphi_n)_N'(1)}.$$
By tautness we can find a subsequence $(n_k)$ such that $(\varphi_{n_k})$ converges uniformly on compact sets to a complex geodesic $\varphi\colon\D\to D$ such that $\varphi(0)=z$. Moreover $\varphi(1)=\xi$ (see for example the proof of \cite[Proposition 4.3]{Arosio-Fiacchi}).

Recall that $(\varphi_{n_k})_N'(1)$ is the angular derivative at $1$ of $\D\ni\zeta\mapsto \langle \varphi_{n_k}(\zeta)-\xi_{n_k}, n_{\xi_{n_k}}\rangle\in\H$, then post-composing with the inverse of the Cayley transform
\begin{equation}\label{cayley}\mathscr{C}\colon \D\to\H,\quad \mathscr{C}(\zeta)=\frac{\zeta-1}{\zeta+1},
\end{equation}
and using Lemma \ref{liminfdila} we obtain  $\liminf_{k\to+\infty}(\varphi_{n_k})_N'(1)\geq \varphi_N'(1)$, which implies  $$\liminf_{n\to+\infty}\Omega_{\xi_n}(z_n)\geq \Omega_{\xi}(z).$$

 For all $n\geq 0$, let $\tilde\rho_n:D\to \D$ be  a holomorphic left inverse of $\varphi_n$. Since $\partial D\cap U$ is $C^2$-smooth, we can find $N\geq 0$ and $r>0$ such that 
$B(\xi_n-rn_{\xi_n},r)\subseteq D$ for all $n\geq N$.
For all $n\geq N$ consider the holomorphic function $f_n\colon\D\to\D$ defined by 
$$f_n(\zeta):=\tilde\rho_n(\xi_n+rn_{\xi_n}(\zeta-1)).$$
Notice that by tautness, up to extracting subsequences, $(f_n)$ converges uniformly on compact sets to the function $f\colon \D\to \D$ given by $f(\zeta)=\tilde\rho(\xi+rn_{\xi}(\zeta-1))$, where $\tilde\rho:D\to \D$ is a left inverse of $\varphi$.
By \cite[Theorem~1.3 and Lemma~11.12]{Arosio-Fiacchi}, we have for all $n\geq 0$,
$$K'\textrm{-}\lim_{z\to \xi_n}(d\tilde \rho_{n})_z(n_{\xi_n})=|\Omega_{\xi_n}(z_n)|.$$
Thus, the angular derivative of $f_n$ at $1$ is $r|\Omega_{\xi_n}(z_n)|$. Similarly we obtain that  the angular derivative of $f$ at $1$ is $r|\Omega_{\xi}(z)|$.
By Lemma \ref{liminfdila},
$$\limsup_{n\to+\infty}\Omega_{\xi_n}(z_n)\leq \Omega_{\xi}(z),$$
and we are done.
\end{proof}

Let $D\subset\C^n$ be a domain, and let $\xi_0\in\partial D$ be a $C^2$-smooth boundary point. 
Choose a   neighborhood  $V$  of $\xi$ and   $r>0$  such that 
 $B(\xi-rn_\xi,r)\subseteq D$ for all $\xi\in\partial D\cap V$. 
 If a neighborhood $U$  of $\xi_0$ is small enough, then    for all $z\in D\cap U$ there exists a unique  point $\pi(z)\in \partial  D\cap V$ such that $\|z-\pi(z)\|=\delta_D(z)$ (see, {\sl e.g.}, \cite[Lemma 2.1]{BaBo}).

\begin{lemma}\label{lemmaestimate}
Let $D\subset\C^n$ be a domain, and let $\xi_0\in\partial D$ be a $C^2$-smooth boundary point.
Let $U, r>0$ be as above. If $z,w\in D\cap U$ satisfy $\pi(z)=\pi(w)$  and $\max\{\delta_D(z),\delta_D(w)\}<r$, then
$$k_D(z,w)\leq\left|\log\frac{\delta_D(z)}{\delta_D(w)}\right|-\log\left(1-\frac{\max\{\delta_D(z),\delta_D(w)\}}{2r}\right).$$
\end{lemma}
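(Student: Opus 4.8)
The plan is to exploit the fact that $z$ and $w$ lie on the same inner normal segment emanating from the common foot point $\pi(z)=\pi(w)=:\eta\in\partial D\cap V$, and that by hypothesis the interior ball $B(\eta-rn_\eta,r)$ is contained in $D$. Since $\delta_D(z)<r$ and $\delta_D(w)<r$, both points $z=\eta-\delta_D(z)n_\eta$ and $w=\eta-\delta_D(w)n_\eta$ lie on the radial segment of this ball joining the center $\eta-rn_\eta$ to the boundary point $\eta$. By the monotonicity of the Kobayashi distance under inclusion, $k_D(z,w)\le k_{B(\eta-rn_\eta,r)}(z,w)$, so it suffices to estimate the Kobayashi distance of these two collinear points inside a ball of radius $r$.

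First I would reduce to a one-dimensional computation. The complex affine line through $z$, $w$ and the center $\eta-rn_\eta$ intersects $B(\eta-rn_\eta,r)$ in a disc of radius $r$, and restricting the Kobayashi distance to this disc (again using monotonicity, since the disc embeds holomorphically and isometrically for its own Kobayashi/Poincar\'e metric, which dominates the ball's) gives $k_D(z,w)\le k_\Delta(z,w)$ where $\Delta$ is a disc of radius $r$ centered at $\eta-rn_\eta$. Parametrize so that $\eta-rn_\eta$ corresponds to $0$ and $\eta$ to $r$; then $z$ corresponds to the real point $s:=r-\delta_D(z)$ and $w$ to $t:=r-\delta_D(w)$, both in $[0,r)$. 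Using the normalization \eqref{normalization} and the M\"obius formula for the disc of radius $r$, one gets
\[
k_\Delta(z,w)=\log\frac{1+\left|\frac{s-t}{r-st/r}\right|}{1-\left|\frac{s-t}{r-st/r}\right|}.
\]

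The remaining task is purely algebraic: bound this expression by $\left|\log\frac{\delta_D(z)}{\delta_D(w)}\right|-\log\left(1-\frac{\max\{\delta_D(z),\delta_D(w)\}}{2r}\right)$. Assume without loss of generality $\delta_D(z)\ge\delta_D(w)$, i.e. $s\le t$. Set $a:=\delta_D(z)=r-s$, $b:=\delta_D(w)=r-t$, so $a\ge b$. A direct manipulation shows the Poincar\'e distance in $\Delta$ equals $\log\frac{(r-st/r)+(t-s)}{(r-st/r)-(t-s)}$; substituting $s=r-a$, $t=r-b$ and simplifying the numerator and denominator (both are ratios of polynomials in $a,b,r$), one finds the numerator factors as $(2r-a)b^{-1}$-type terms against $a$, yielding after simplification a bound of the form $\frac{a}{b}\cdot\frac{1}{1-a/(2r)}$ for the argument of the outer logarithm; taking $\log$ gives exactly the claimed right-hand side. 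The \textbf{main obstacle} I anticipate is this final estimate: one must be careful that the naive bound $\left|\frac{s-t}{r-st/r}\right|\le\frac{t-s}{r-st/r}$ together with $1-\left|\frac{s-t}{r-st/r}\right|\ge\frac{(r-st/r)-(t-s)}{r}\cdot(\text{something})$ is tight enough, and in particular that the correction factor $(1-\max\{\delta_D(z),\delta_D(w)\}/(2r))^{-1}$ — rather than the sharper $(1-\max/r)^{-1}$ one might first write down — is what survives; verifying the inequality $(r-st/r)-(t-s)\ge b(2r-a)/(2r)$, or its correct variant, is where the work lies. Everything else is monotonicity of $k_D$ and bookkeeping.
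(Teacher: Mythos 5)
Your proposal is correct and is essentially the paper's own argument: both bound $k_D(z,w)$ by the Kobayashi distance of the internally tangent ball $B(\hat\xi-rn_{\hat\xi},r)$ and evaluate it for the two points on the normal radius (the paper quotes the ball formula directly, you pass through the one-dimensional slice, which is the same computation). The algebraic step you flag as the main obstacle does close exactly as you guessed: with $a=\delta_D(z)\geq b=\delta_D(w)$ the slice distance equals $\log\left(\frac{a(2r-b)}{b(2r-a)}\right)$, and replacing $2r-b$ by $2r$ gives precisely $\log\frac{a}{b}-\log\left(1-\frac{a}{2r}\right)$.
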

\proof

Denote $\pi(z)=\pi(w)=:\hat\xi\in\partial D\cap V$.  We may assume $\delta_D(z)\geq\delta_D(w)$. Denote with $B:=B(\hat\xi-rn_{\hat\xi},r)$, clearly we have
\begin{align*}k_D(z,w)\leq k_B(z,w)&=\log\left(\frac{\delta_D(z)}{\delta_D(w)}\cdot\frac{2r-\delta_D(w)}{2r-\delta_D(z)}\right)\\&\leq \log\frac{\delta_D(z)}{\delta_D(w)}+\log\left(\frac{2r}{2r-\delta_D(z)}\right)\\&=\log\frac{\delta_D(z)}{\delta_D(w)}-\log\left(1-\frac{\delta_D(z)}{2r}\right).\end{align*}

\endproof

\begin{lemma}\label{lemmaexclaim}
Let $D\subset\C^n$ be a $\C$-proper convex domain and let  $\xi\in\partial D$ be a point of locally finite type.
Let $\varphi\colon\D\to D$ be a complex geodesic with endpoint $\xi$. If $(\zeta_n)$ is a sequence in $\D$ converging non-tangentially to $1$, then
$$\limsup_{n\to+\infty}\frac{\delta_D(\varphi(\zeta_n))}{1-|\zeta_n|}\leq\varphi'_N(1). $$
\end{lemma}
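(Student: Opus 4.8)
The plan is to relate the Euclidean distance $\delta_D(\varphi(\zeta_n))$ to a quantity controlled by the holomorphic function $\varphi_N = P\circ\varphi = \langle\varphi-\xi, n_\xi\rangle\colon\D\to\H$, whose angular derivative at $1$ is precisely $\varphi'_N(1)$. The key observation is that, since $D$ lies in the half-space $\{z : \Re\langle z-\xi,n_\xi\rangle < 0\}$ (by convexity, as $\xi\in\partial D$ and $n_\xi$ is the outer normal), the distance of any point $z\in D$ to $\partial D$ is bounded above by its distance to this supporting hyperplane, namely $\delta_D(z)\le -\Re\langle z-\xi,n_\xi\rangle = -\Re\varphi_N(\zeta)$ when $z=\varphi(\zeta)$. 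Hence it suffices to show
$$\limsup_{n\to+\infty}\frac{-\Re\varphi_N(\zeta_n)}{1-|\zeta_n|}\leq\varphi'_N(1).$$

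To establish this, I would apply the Julia--Wolff--Carath\'eodory machinery. Composing $\varphi_N$ with the inverse Cayley transform gives a holomorphic self-map of $\D$ fixing $1$ with angular derivative $\varphi'_N(1)$ there; equivalently one works directly in the half-plane. By Julia's Lemma applied to $\varphi_N$ at the boundary point $1$ (using that $\angle\lim_{\zeta\to 1}\varphi_N(\zeta)=0\in\partial\H$ with finite angular derivative $\varphi'_N(1)$, which holds by \eqref{varphi'_N(1)}), one gets the horoball inclusion: for all $\zeta\in\D$,
$$\frac{|\varphi_N(\zeta)|^2}{-\Re\varphi_N(\zeta)}\leq\varphi'_N(1)\,\frac{|1-\zeta|^2}{1-|\zeta|^2}.$$
Since $|\varphi_N(\zeta)|\ge -\Re\varphi_N(\zeta)\ge 0$, the left side is $\ge -\Re\varphi_N(\zeta)$, so
$$-\Re\varphi_N(\zeta)\leq\varphi'_N(1)\,\frac{|1-\zeta|^2}{1-|\zeta|^2}.$$
Now if $(\zeta_n)$ converges to $1$ non-tangentially, then $\frac{|1-\zeta_n|^2}{(1-|\zeta_n|)^2}$ stays bounded, but to get the sharp constant $\varphi'_N(1)$ I need the non-tangential refinement: as $\zeta_n\to 1$ non-tangentially, $\frac{|1-\zeta_n|^2}{1-|\zeta_n|^2}$ is comparable to $1-|\zeta_n|$ only up to a factor depending on the aperture, so the crude Julia estimate alone gives a constant worse than $\varphi'_N(1)$ by that aperture factor. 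The fix is to use instead the Julia--Wolff--Carath\'eodory statement that the non-tangential limit $\angle\lim_{\zeta\to 1}\frac{-\Re\varphi_N(\zeta)}{1-|\zeta|}$ equals $\varphi'_N(1)$ (this is the half-plane analogue of \eqref{ntdildisk} in Lemma~\ref{omegaratiodisk}, obtained by transferring via the Cayley transform; cf. \cite[Theorem~1.7.3, Proposition~1.7.4]{booksemigroup}). Combined with $\delta_D(\varphi(\zeta_n))\le -\Re\varphi_N(\zeta_n)$, this yields
$$\limsup_{n\to+\infty}\frac{\delta_D(\varphi(\zeta_n))}{1-|\zeta_n|}\leq\angle\lim_{\zeta\to 1}\frac{-\Re\varphi_N(\zeta)}{1-|\zeta|}=\varphi'_N(1),$$
since a non-tangentially convergent sequence realizes the non-tangential limit.

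The main obstacle is keeping the constant sharp: a naive bound via the supporting hyperplane combined with Julia's Lemma introduces an aperture-dependent loss, so the proof must route through the genuine angular-derivative statement (the half-plane version of \eqref{ntdildisk}) rather than a Julia horoball inequality. The only other point requiring care is justifying that $\varphi_N$ has finite angular derivative at $1$ with the stated value and that $\delta_D(\varphi(\zeta))\le-\Re\varphi_N(\zeta)$, both of which are straightforward: the former is \eqref{varphi'_N(1)}, and the latter is convexity of $D$ together with the fact that $\xi+tn_\xi\notin\overline D$ for $t>0$ small, so the affine hyperplane through $\xi$ with normal $n_\xi$ supports $D$.
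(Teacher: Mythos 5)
Your proposal is correct and follows essentially the same route as the paper: bound $\delta_D(\varphi(\zeta))$ by the distance $-\Re\varphi_N(\zeta)$ to the supporting hyperplane at $\xi$, then compute $\angle\lim_{\zeta\to 1}\frac{-\Re\varphi_N(\zeta)}{1-|\zeta|}=\varphi'_N(1)$ by transferring \eqref{ntdildisk} through the Cayley transform (the factor $2$ from $(\mathscr{C}^{-1})'(0)$ cancels against the limit $\tfrac12$ of $\frac{1+|f(\zeta)|}{|1+f(\zeta)|^2}$). Your preliminary Julia-Lemma detour is superfluous, but you correctly identify that only the genuine angular-derivative statement gives the sharp constant, which is exactly the paper's argument.
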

\proof
Let $P:\C^n\to\C$ be the projection $P(z)=\langle z-\xi,n_\xi\rangle$. By convexity $P(D)\subset\H$.
Set $f\colon\D\to\D$ given by $f:=\mathscr{C}^{-1}\circ P\circ\varphi$, where $\mathscr{C}$ is the Cayley transform \eqref{cayley}. 
Notice that $\angle \lim_{\zeta\to1}f(\zeta)=1$ and $f'(1)=2\varphi'_N(1)$ (since $(\mathscr{C}^{-1})'(0)=2$).
By (\refeq{ntdildisk}) we have
$$\angle\lim_{\zeta\to1}\frac{1-|f(\zeta)|}{1-|\zeta|}=2\varphi'_N(1).$$
Finally, noticing that for all $\zeta\in\D$
$$\delta_\H(\mathscr{C}(\zeta))=-\Re\mathscr{C}(\zeta)=\frac{1-|\zeta|^2}{|1+\zeta|^2},$$
we have
$$\frac{\delta_D(\varphi(\zeta_n))}{1-|\zeta_n|}\leq\frac{\delta_\H( P(\varphi(\zeta_n)))}{1-|\zeta_n|}=\frac{1-|f(\zeta_n)|}{1-|\zeta_n|}\cdot\frac{1+|f(\zeta_n)|}{|1+f(\zeta_n)|^2}\stackrel{n\to\infty}\longrightarrow\varphi'_N(1).$$
\endproof

\begin{remark}
We will prove later on that actually 
 $\angle\lim_{\zeta\to1}\frac{\delta_D(\varphi(\zeta))}{1-|\zeta|}=\varphi'_N(1),$ see Theorem~\ref{deltaratio}. Notice however that the proof of Theorem \ref{deltaratio} is based on Theorem \ref{Kobestimate}.
\end{remark}

\proof[Proof of Theorem \ref{Kobestimate}]

\textbf{Lower bound:}
Let $(z_n)$ be a sequence in $D$ converging to $\xi$. Let $\xi_n:=\pi(z_n)\in\partial D$ be the closest boundary point.
Fix $\varepsilon>0$.
By Remark \ref{kobestnormal}, for all $n$ large enough  there exists a point $w_n$ in the segment $[z_n,\xi_n)\subset D$
such that $$\left|k_D(w_n,p)+\log\delta_D(w_n)+\log\frac{|\Omega_{\xi_n}(p)|}{2}\right|<\varepsilon$$
By Lemma \ref{lemmaestimate} we have that, for  $n$ large enough, 
$$
k_D(z_n,w_n)\leq\log\frac{\delta_D(z_n)}{\delta_D(w_n)}+\varepsilon.
$$
Moreover,
\begin{align*}
k_D(z_n,p)+\log\delta_D(z_n)&\geq k_D(w_n,p)-k_D(z_n,w_n)+\log\delta_D(z_n)
\\&\geq k_D(w_n,p)-\log\frac{\delta_D(z_n)}{\delta_D(w_n)}+\log\delta_D(z_n)-\varepsilon
\\&=k_D(w_n,p)+\log\delta_D(w_n)-\varepsilon
\\&\geq-\log\frac{|\Omega_{\xi_n}(p)|}{2}-2\varepsilon.\end{align*}
 Since $\xi_n\to\xi$ we have, by Proposition \ref{contpole},  $\Omega_{\xi_n}(p)\to\Omega_\xi(p)$, and thus
$$\liminf_{n\to+\infty}k_D(z_n,p)+\log\delta_D(z_n)\geq-\log\frac{|\Omega_{\xi}(p)|}{2}.$$
\textbf{Upper bound:} 
Assume first that $(z_n)$ is a   sequence  $K'$-converging to $\xi$. Let $\varphi\colon\D\to D$ be a complex geodesic  with $\varphi(0)=p$ and with endpoint $\xi$. Then by definition  there exists  a sequence $(\zeta_n)$ in $\D$ converging to 1 non-tangentially such that $k_D(z_n,\varphi(\zeta_n))\to0$. By \cite[Proposition 2.4]{Mercer}, for all $z,w\in D$ we have
$$\left|\log\frac{\delta_D(z)}{\delta_D(w)}\right|\leq k_D(z,w),$$
thus
\begin{align*}
	k_D(z_n,p)+\log\delta_D(z_n)&\leq k_D(\varphi(\zeta_n),p)+k_D(z_n,\varphi(\zeta_n))+\log\delta_D(z_n)
	\\&\leq k_D(\varphi(\zeta_n),p)+\log \delta_D(\varphi(\zeta_n))+2k_D(z_n,\varphi(\zeta_n)).
	\end{align*}
Hence it is enough to prove the result for the sequence $(\varphi(\zeta_n))$, and this follows from  Lemma~\ref{lemmaexclaim}:
\begin{align*}
	\limsup_{n\to+\infty}[k_D(\varphi(\zeta_n),p)+\log\delta_D(\varphi(\zeta_n))]&=	\limsup_{n\to+\infty} \left[\log\frac{1+|\zeta_n|}{1-|\zeta_n|}+\log\delta_D(\varphi(\zeta_n))\right]\\&\leq\log(2\varphi_N'(1))\\&=-\log\frac{|\Omega_\xi(p)|}{2}.
\end{align*}  Assume now that  $(z_n)$ is a sequence converging to $\xi$, and let $\xi_n:=\pi(z_n)\in\partial D$ be the closest boundary point.
Set $t_n:=\max\{\|\xi-\xi_n\|,\delta_D(z_n)\}$, and define a sequence   as
$ w_n:=\xi_n-t_nn_{\xi_n}.$
If $n$ is large enough, $w_n$ is contained in $D$ and satisfies $\delta_D(w_n)=t_n$ and $\pi(w_n)=\xi_n$. It then follows that 
$$\|w_n-\xi\|\leq 2\delta_D(w_n),$$ and thus $(w_n)$ converges to $\xi$ non-tangentially (in the sense of cones). By \cite[Theorem~7.9]{Arosio-Fiacchi}, this implies that $(w_n)$ $K'$-converges to $\xi$.
Fix $\varepsilon>0$.
Using again Lemma \ref{lemmaestimate} we have that, for $n$ large enough,
\begin{align*}
	k_D(z_n,p)+\log\delta_D(z_n)&\leq k_D(w_n,p)+k_D(z_n,w_n)+\log\delta_D(z_n)
	\\&\leq k_D(w_n,p)+\log\frac{\delta_D(w_n)}{\delta_D(z_n)}+\log\delta_D(z_n)+\varepsilon
	\\&=k_D(w_n,p)+\log\delta_D(w_n)+\varepsilon.\end{align*}
By the first part of the proof $$\limsup_{n\to+\infty}[k_D(w_n,p)+\log\delta_D(w_n)]\leq-\log\frac{|\Omega_{\xi}(p)|}{2},$$ and thus
$$\limsup_{n\to+\infty}[k_D(z_n,p)+\log\delta_D(z_n)]\leq-\log\frac{|\Omega_{\xi}(p)|}{2}.$$
\endproof
The following corollary is immediate.
\begin{corollary}\label{kDvskD'}
	Let $D\subset\C^n$ be a $\C$-proper convex domain and let  $\xi\in\partial D$ be a boundary point of finite type. Let $U$ be a neighborhood of $\xi$ such that $D':=D\cap U$ is convex. Then
	$$\lim_{z\to\xi}k_{D'}(z,p)-k_{D}(z,p)=\log\frac{\Omega^D_\xi(p)}{\Omega^{D'}_\xi(p)}.$$
\end{corollary}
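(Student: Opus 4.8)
The plan is to obtain this as a direct consequence of Theorem~\ref{Kobestimate} applied to both domains $D$ and $D'=D\cap U$. First I would observe that $\xi$ is a boundary point of finite type for $D'$ as well, since near $\xi$ the domains $D$ and $D'$ share the same boundary, and that $D'$ is $\C$-proper and convex by hypothesis; hence $\Omega^{D'}_\xi$ is defined and Theorem~\ref{Kobestimate} applies to $D'$.

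Next I would write, for a sequence $z\to\xi$ inside $D'$ (which is a neighborhood of $\xi$ in $D$, so any sequence in $D$ tending to $\xi$ eventually lies in $D'$),
\begin{align*}
k_{D'}(z,p)-k_D(z,p) &= \bigl[k_{D'}(z,p)+\log\delta_{D'}(z)\bigr] - \bigl[k_D(z,p)+\log\delta_D(z)\bigr]\\
&\quad + \log\delta_D(z)-\log\delta_{D'}(z).
\end{align*}
By Theorem~\ref{Kobestimate} the first bracket tends to $-\log\frac{|\Omega^{D'}_\xi(p)|}{2}$ and the second to $-\log\frac{|\Omega^{D}_\xi(p)|}{2}$, while $\delta_{D'}(z)=\delta_D(z)$ for $z$ close enough to $\xi$ (the nearest boundary point $\pi(z)$ lies in $\partial D\cap U=\partial D'\cap U$), so the last difference vanishes. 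Taking the limit gives
$$
\lim_{z\to\xi}\bigl[k_{D'}(z,p)-k_D(z,p)\bigr] = -\log\frac{|\Omega^{D'}_\xi(p)|}{2} + \log\frac{|\Omega^{D}_\xi(p)|}{2} = \log\frac{|\Omega^D_\xi(p)|}{|\Omega^{D'}_\xi(p)|},
$$
and since both Poisson kernels are strictly negative, the absolute values can be dropped, yielding the stated formula.

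The only subtle point — and it is minor — is justifying that $\delta_{D'}(z)=\delta_D(z)$ for $z$ near $\xi$: this follows because for $U$ small the nearest-point projection $\pi(z)$ to $\partial D$ lands in $\partial D\cap U$, which coincides with a portion of $\partial D'$, and no point of the "artificial" part of $\partial D'$ (coming from $\partial U$) can be closer. One also needs $p\in D'$, which can be assumed after possibly shrinking nothing, or by noting the statement is naturally about $p\in D'$; if $p\in D\setminus D'$ the left-hand side is not even defined, so implicitly $p\in D'$. Given these remarks, the corollary is immediate, as stated.
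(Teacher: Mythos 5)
Your proposal is correct and is exactly the route the paper intends: the paper calls the corollary ``immediate'' from Theorem~\ref{Kobestimate}, i.e.\ apply that theorem to both $D$ and $D'$ and use that $\delta_{D'}(z)=\delta_D(z)$ near $\xi$. Your extra checks (that $D'$ is $\C$-proper convex with $\xi$ of locally finite type for $D'$, that the nearest boundary point lies in $\partial D\cap U$ so the two distance functions agree, and that implicitly $p\in D'$) are precisely the details the paper leaves to the reader, and they are all handled correctly.
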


The next result is a generalization of \cite[Proposition 4.11]{BST} to $\C$-proper convex domains of finite type. 
With Corollary  \ref{kDvskD'} at hand the proof is remarkably simpler.
\begin{corollary}
	Let $D\subset\C^n$ be a $\C$-proper convex domain and let  $\xi\in\partial D$ be a boundary point of finite type. Let $U$ be a neighborhood of $\xi$ such that $D':=D\cap U$ is convex. Then
	$$\lim_{z\to\xi}\frac{\Omega_\xi^D(z)}{\Omega_\xi^{D'}(z)}=1.$$
\end{corollary}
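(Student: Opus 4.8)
The plan is to combine the two preceding corollaries. By Corollary~\ref{kDvskD'} we have
\[
\lim_{z\to\xi}\bigl[k_{D'}(z,p)-k_D(z,p)\bigr]=\log\frac{\Omega^D_\xi(p)}{\Omega^{D'}_\xi(p)},
\]
which already identifies the ratio $\Omega^D_\xi(p)/\Omega^{D'}_\xi(p)$ for a \emph{fixed} base-point $p$; the content of the present corollary is that this ratio tends to $1$ as the \emph{variable} point $z$ approaches $\xi$. The natural route is therefore to reinterpret $\Omega^D_\xi(z)/\Omega^{D'}_\xi(z)$ via the Poisson--horofunction formula (Proposition~\ref{omegahorofunction}): for any fixed $p\in D'$,
\[
\log\frac{\Omega^D_\xi(z)}{\Omega^{D'}_\xi(z)}
=\log\frac{\Omega^D_\xi(z)}{\Omega^D_\xi(p)}-\log\frac{\Omega^{D'}_\xi(z)}{\Omega^{D'}_\xi(p)}+\log\frac{\Omega^D_\xi(p)}{\Omega^{D'}_\xi(p)}
=-h^D_{\xi,p}(z)+h^{D'}_{\xi,p}(z)+\log\frac{\Omega^D_\xi(p)}{\Omega^{D'}_\xi(p)}.
\]
So it suffices to show that the horofunction discrepancy $h^{D'}_{\xi,p}(z)-h^D_{\xi,p}(z)$ tends to $\log\bigl(\Omega^{D'}_\xi(p)/\Omega^D_\xi(p)\bigr)$ as $z\to\xi$.

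The cleanest way to get at the horofunctions is through Theorem~\ref{Kobestimate} applied to both $D$ and $D'$. Indeed, for $z$ close enough to $\xi$ we have $z\in D'$ and $\delta_{D'}(z)=\delta_D(z)$, since both equal the distance to the common boundary piece $\partial D\cap U$ near $\xi$. Hence
\[
k_{D'}(z,p)+\log\delta_{D'}(z)\xrightarrow[z\to\xi]{}-\log\frac{|\Omega^{D'}_\xi(p)|}{2},
\qquad
k_{D}(z,p)+\log\delta_{D}(z)\xrightarrow[z\to\xi]{}-\log\frac{|\Omega^{D}_\xi(p)|}{2}.
\]
Subtracting, and using $\delta_{D'}(z)=\delta_D(z)$, gives
\[
\lim_{z\to\xi}\bigl[k_{D'}(z,p)-k_D(z,p)\bigr]=\log\frac{|\Omega^D_\xi(p)|}{|\Omega^{D'}_\xi(p)|},
\]
recovering Corollary~\ref{kDvskD'}; more importantly, the same two limits let us compute the horofunctions themselves. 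Writing $k_{D}(z,p)=\bigl[k_D(z,p)+\log\delta_D(z)\bigr]-\log\delta_D(z)$ and similarly for $D'$, and using that $h_{\xi,p}$ is the limit of $k(z,w)-k(w,p)$ along $w\to\xi$, one sees that the $-\log\delta$ terms are identical for $D$ and $D'$ and cancel, so that
\[
\lim_{z\to\xi}\bigl[h^{D'}_{\xi,p}(z)-h^D_{\xi,p}(z)\bigr]
=-\log\frac{|\Omega^{D'}_\xi(p)|}{2}+\log\frac{|\Omega^{D}_\xi(p)|}{2}
=\log\frac{|\Omega^{D}_\xi(p)|}{|\Omega^{D'}_\xi(p)|}.
\]
Plugging this into the displayed decomposition of $\log\bigl(\Omega^D_\xi(z)/\Omega^{D'}_\xi(z)\bigr)$, the three terms telescope to $0$, which gives $\lim_{z\to\xi}\Omega^D_\xi(z)/\Omega^{D'}_\xi(z)=1$.

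The only delicate point is the bookkeeping that makes the $-\log\delta$ terms cancel: one must be sure that $\delta_{D'}(z)=\delta_D(z)$ for $z$ in a full neighborhood of $\xi$ inside $D'$ (true because the nearest-point projection lands on the shared boundary patch $\partial D\cap U$, cf.\ the discussion before Lemma~\ref{lemmaestimate}), and that the convergence in Theorem~\ref{Kobestimate} is genuinely as $z\to\xi$ unrestrictedly, so no approach-region hypothesis sneaks in. Everything else is a routine rearrangement. (Alternatively, and even more directly, one can bypass horofunctions entirely: fix $z$ near $\xi$, apply Theorem~\ref{Kobestimate} with the \emph{base-point} taken to be $z$ itself on both domains — legitimate since $z\in D\cap D'$ and the limit there is $-\log(|\Omega_\eta(z)|/2)$ as the new free variable $\to\eta$ — but this requires continuity of $\Omega_\xi$ in the pole, Proposition~\ref{contpole}, to pass to the limit $z\to\xi$; I expect the horofunction argument above to be the shortest.)
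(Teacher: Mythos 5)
Your reduction via the Poisson--horofunction formula is only a reformulation of the statement: since $-\log|\Omega_\xi|$ \emph{is} a horofunction (Proposition~\ref{omegahorofunction}), the identity
\[
\log\frac{\Omega^D_\xi(z)}{\Omega^{D'}_\xi(z)}=\bigl[h^{D'}_{\xi,p}(z)-h^D_{\xi,p}(z)\bigr]+\log\frac{\Omega^D_\xi(p)}{\Omega^{D'}_\xi(p)}
\]
holds exactly, so showing that the horofunction discrepancy tends to $\log\bigl(|\Omega^{D'}_\xi(p)|/|\Omega^D_\xi(p)|\bigr)$ as $z\to\xi$ is literally equivalent to the corollary, and your ``bookkeeping'' step does not prove it. The horofunctions are limits in the auxiliary variable $w\to\xi$ with $z$ \emph{fixed}; applying Theorem~\ref{Kobestimate} (equivalently Corollary~\ref{kDvskD'}) in that variable, with the $\log\delta$ terms cancelling, only re-derives the identity above at the point $z$ --- it gives no information on the limit as $z\to\xi$. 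Moreover the value you assign to that limit, $\log\bigl(|\Omega^{D}_\xi(p)|/|\Omega^{D'}_\xi(p)|\bigr)$, has the wrong sign: substituted back into your decomposition it would give $\lim_{z\to\xi}\log\bigl(\Omega^D_\xi(z)/\Omega^{D'}_\xi(z)\bigr)=2\log\bigl(|\Omega^D_\xi(p)|/|\Omega^{D'}_\xi(p)|\bigr)$, which is not $0$ in general (for $D'\subsetneq D$ one only knows $k_{D'}\geq k_D$, so typically $|\Omega^D_\xi(p)|>|\Omega^{D'}_\xi(p)|$). The genuine difficulty, which you flag but do not resolve, is uniformity in the base point: Theorem~\ref{Kobestimate} and Corollary~\ref{kDvskD'} are statements for a fixed base point, while the corollary asks what happens when the base point itself tends to $\xi$. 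Your fallback via Proposition~\ref{contpole} does not help either: there the pole varies on $\partial D$ while the argument stays inside $D$; here the pole is fixed and it is the interior point that escapes to the boundary.

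The paper closes exactly this gap with the strong localization of the Kobayashi distance of Nikolov--\"Okten \cite[Proposition 6]{NO}, the genuinely two-variable statement $\lim_{z,w\to\xi}[k_{D'}(z,w)-k_D(z,w)]=0$. Given $\varepsilon>0$ one gets a neighborhood $V$ of $\xi$ with $|k_{D'}(z,w)-k_D(z,w)|\leq\varepsilon$ for all $z,w\in D'\cap V$; fixing $z\in D'\cap V$, letting $w\to\xi$ and applying Corollary~\ref{kDvskD'} with base point $z$ yields $\bigl|\log\bigl(\Omega^D_\xi(z)/\Omega^{D'}_\xi(z)\bigr)\bigr|\leq\varepsilon$ on $D'\cap V$. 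Your argument becomes correct once such a uniform localization input is added; without it the key limit is unjustified.
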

\proof
By \cite[Proposition 6]{NO} we have
$$\lim_{z,w\to\xi}[k_{D'}(z,w)-k_{D}(z,w)]=0,$$
that is,  for all $\varepsilon>0$ we can find a neighborhood $V$ of $\xi$ such that for all $z,w\in D'\cap V$
$$|k_{D'}(z,w)-k_{D}(z,w)|\leq \varepsilon.$$
  Letting $w\to\xi$ we have, by Corollary \ref{kDvskD'}, 
$$\left|\log\frac{\Omega^D_\xi(z)}{\Omega^{D'}_\xi(z)}\right|\leq \varepsilon,$$
for all $z\in D'\cap V$.
Thus
$$\lim_{z\to\xi}\frac{\Omega_\xi^D(z)}{\Omega_\xi^{D'}(z)}=1.$$
\endproof

\section{$\Omega_\xi$ and the pluriharmonic  measure}\label{sectDem}
In this section we show that putting together Theorem \ref{Kobestimate} with  the theory of pluriharmonic measures    developed by Demailly in \cite{Demailly}, we obtain a reproducing formula for plurisubharmonic functions. The proof in \cite{BPT} of the same result for strongly convex domains does not work in our setting since it is  based on the regularity of the Green function.
Let $D\subset \C^n$ be a bounded convex domain of finite type. Let $\rho$ be a $C^2$-smooth defining function of $D$, and let $\omega_D$ be the $(2n-1)$-form defined by 
\begin{equation}\label{Eq:Levi-form}
\omega_{\partial D}:=\frac{(dd^c\rho)^{n-1}\wedge d^c\rho}{\|d\rho\|^n}|_{\partial D},
\end{equation}
which is semipositive and independent on the defining function $\rho$. 

\begin{remark}
Let $\mathcal{L}_\rho$ denote the Levi form of $\rho$. It is proved in \cite[Remark 9.1]{BST} that
$$\omega_{\partial D}=4^{n-1}(n-1)!\frac{\mbox{det}(\mathcal{L}_\rho)}{\|d\rho\|^{n-1}}d\mbox{Vol}_{\partial D}.$$ In particular $\omega_{\partial D}$ is supported on the subset of the strongly pseudoconvex points of the boundary.
\end{remark}
Let $\varphi\colon D\to [-\infty,0)$ be a continuous plurisubharmonic exhaustion function with finite  total Monge--Amp\`ere mass, that is
$$\int_D (dd^c\varphi)^n<+\infty.$$ Demailly \cite{Demailly} associates to every such $\varphi$ a measure $\mu_\varphi$ on $\partial D$ with mass $\int_D (dd^c\varphi)^n$.   When $\varphi=G_z/2\pi$ with $z\in D$, the probability measure $\mu_\varphi$ is denoted $\mu_z$ and  is called the {\it pluriharmonic  measure} at $z$.
If $\varphi$ is a defining function of $D$ which is $C^2$-smooth near the boundary, then by Demailly's construction one has
$$d\mu_\varphi=((dd^c\varphi)^{n-1}\wedge d^c\varphi)|_{\partial D}.$$ In particular, if $\|d\varphi\|=1 $ on $\partial D$, it follows that $d\mu_\varphi=\omega_D$.

Notice that the signed distance function $r:=-\delta_D\colon \overline{D}\to (-\infty,0]$ is a convex defining  function (in particular it is plurisubharmonic), which is  $C^2$-smooth  near the boundary and satisfies $\|dr\|=1$ on $\partial D$, and thus $d \mu_r=\omega_D$.
\begin{proposition}\label{limitratio}
For all $\xi\in\partial D$ we have
$$\lim_{w\to\xi}\frac{G_z(w)}{r(w)}=|\Omega_\xi(z)|.$$
\end{proposition}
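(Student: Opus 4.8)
The plan is to recognize this as an immediate consequence of Theorem~\ref{Kobestimate} combined with the explicit relation $G_z(w)=\log\tanh(k_D(z,w)/2)$ from \eqref{GinCconvex} and the fact that $r=-\delta_D$.

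\textbf{Key steps.} First I would rewrite the numerator near the boundary. Since $G_z(w)=\log\tanh(k_D(z,w)/2)=\log\frac{1-e^{-k_D(z,w)}}{1+e^{-k_D(z,w)}}$, and $k_D(z,w)\to+\infty$ as $w\to\xi$ (by $\mathbb{C}$-properness, or directly from the lower bound $k_D(z,w)\geq -\log\delta_D(w)+C_1$), we have the asymptotic equivalence
\[
G_z(w)=\log\left(1-\frac{2e^{-k_D(z,w)}}{1+e^{-k_D(z,w)}}\right)\sim -2e^{-k_D(z,w)}\qquad\text{as }w\to\xi,
\]
in the precise sense that $\lim_{w\to\xi}\dfrac{G_z(w)}{-2e^{-k_D(z,w)}}=1$ (exactly as in the proof of Theorem~\ref{derivataGreen}, first factor). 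Second, since $r(w)=-\delta_D(w)$, I write
\[
\frac{G_z(w)}{r(w)}=\frac{G_z(w)}{-2e^{-k_D(z,w)}}\cdot\frac{-2e^{-k_D(z,w)}}{-\delta_D(w)}=\frac{G_z(w)}{-2e^{-k_D(z,w)}}\cdot 2\,\frac{e^{-k_D(z,w)}}{\delta_D(w)}.
\]
Third, I observe that $\dfrac{e^{-k_D(z,w)}}{\delta_D(w)}=\exp\bigl(-[k_D(z,w)+\log\delta_D(w)]\bigr)$, and by Theorem~\ref{Kobestimate} (applied with $p=z$ and the variable point running to $\xi$) we have $k_D(z,w)+\log\delta_D(w)\to -\log\frac{|\Omega_\xi(z)|}{2}$, hence
\[
\frac{e^{-k_D(z,w)}}{\delta_D(w)}\;\xrightarrow[w\to\xi]{}\;\frac{|\Omega_\xi(z)|}{2}.
\]
Multiplying the three limits together gives $\lim_{w\to\xi}\dfrac{G_z(w)}{r(w)}=1\cdot 2\cdot\dfrac{|\Omega_\xi(z)|}{2}=|\Omega_\xi(z)|$, as claimed.

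\textbf{Main obstacle.} There is essentially no obstacle here: the only nontrivial input is Theorem~\ref{Kobestimate}, which is already proved, and the rest is the same elementary manipulation of $\tanh$ used in Theorem~\ref{derivataGreen}. The one point requiring a word of care is that the limit in Theorem~\ref{Kobestimate} is taken as the argument tends to $\xi$ through \emph{all} sequences in $D$ (not just non-tangential ones), which is exactly what is needed since here $w\to\xi$ is unrestricted; and one should note $r(w)<0$ for $w\in D$ so the quotient is well-defined and the signs work out to give the positive quantity $|\Omega_\xi(z)|$.
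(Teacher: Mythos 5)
Your proof is correct and follows exactly the paper's own argument: the same factorization $\frac{G_z(w)}{r(w)}=\frac{G_z(w)}{-2e^{-k_D(z,w)}}\cdot\frac{2e^{-k_D(z,w)}}{\delta_D(w)}$, with the first factor tending to $1$ as in Theorem~\ref{derivataGreen} and the second tending to $|\Omega_\xi(z)|$ by Theorem~\ref{Kobestimate}. Nothing further is needed.
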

\proof
We have $$\frac{G_z(w)}{r(w)}=\frac{G_z(w)}{-2e^{-k_D(z,w)}}\frac{2e^{-k_D(z,w)}}{\delta_D(w)},$$
and $\frac{G_z(w)}{-2e^{-k_D(z,w)}}\stackrel{w\to\xi}\longrightarrow1$, while, by Proposition \ref{Kobestimate},
$\frac{2e^{-k_D(z,w)}}{\delta_D(w)}\stackrel{w\to\xi}\longrightarrow |\Omega_\xi(z)|.$
\endproof

\begin{theorem}
Let $D\subset\C^n$ be a bounded convex domain of finite type. The pluriharmonic  measure $\mu_z$ satisfies $d\mu_z(\xi)=\frac{1}{(2\pi)^n} |\Omega_\xi(z)|^n \omega_{\partial D}(\xi)$. In particular, if $F\in\psh(D)\cap C^0(\overline{D})$, then for all $z\in D$  
$$F(z)=\frac{1}{(2\pi)^n}\int_{\xi\in\partial D}|\Omega_{\xi}(z)|^nF(\xi)\omega_{\partial D}(\xi)-\frac{1}{(2\pi)^n}\int_{w\in D}|G_z(w)|dd^cF(w)\wedge(dd^cG_z)^{n-1}(w).$$
 If $F$ is pluriharmonic then
$$F(z)=\frac{1}{(2\pi)^n}\int_{\xi\in\partial D}|\Omega_{\xi}(z)|^nF(\xi)\omega_{\partial D}(\xi).$$
\end{theorem}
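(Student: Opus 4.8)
The plan is to invoke Demailly's theory of pluriharmonic measures together with Theorem \ref{Kobestimate}. The key point is to identify the measure $\mu_z$ attached by Demailly to the exhaustion function $\varphi = G_z/2\pi$. By Demailly's general construction, if $\varphi_\varepsilon$ is a suitable family of smooth exhaustions decreasing to $G_z/2\pi$, the measure $\mu_z$ is the weak-$*$ limit of the boundary traces $((dd^c\varphi_\varepsilon)^{n-1}\wedge d^c\varphi_\varepsilon)|_{\partial D}$; more conveniently, one compares $G_z$ with the defining function $r=-\delta_D$. Since $r$ is $C^2$ near the boundary with $\|dr\|=1$ on $\partial D$, one has $d\mu_r = \omega_{\partial D}$. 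The Radon–Nikodym derivative of $\mu_z$ with respect to $\mu_r$ is governed by the boundary ratio $G_z/r$, and each of the $n$ factors $dd^c$ or $d^c$ contributes one power of this ratio; thus $d\mu_z = \bigl(\lim_{w\to\xi} G_z(w)/r(w)\bigr)^n \omega_{\partial D}$ — at least formally. By Proposition \ref{limitratio} this limit equals $|\Omega_\xi(z)|$, giving $d\mu_z(\xi) = \frac{1}{(2\pi)^n}|\Omega_\xi(z)|^n\omega_{\partial D}(\xi)$.

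First I would make the comparison argument precise. The cleanest route is to follow the argument already carried out in \cite{BPT, BST} for the strongly (pseudo)convex case, replacing their use of boundary regularity of $G_z$ by Proposition \ref{limitratio}: one writes $G_z = \psi\cdot r$ near $\partial D$ where $\psi:\overline D\to (0,\infty)$ extends continuously to the boundary with $\psi|_{\partial D}(\xi)=|\Omega_\xi(z)|$ by Proposition \ref{limitratio}, expands $(dd^c G_z)^{n-1}\wedge d^c G_z = (dd^c(\psi r))^{n-1}\wedge d^c(\psi r)$, and observes that upon restriction to $\partial D$ (where $r=0$) only the terms in which every differentiation hits $r$ survive, yielding $\psi^n\,(dd^c r)^{n-1}\wedge d^c r|_{\partial D} = \psi^n\,\|dr\|^n\,\omega_{\partial D}$. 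Since $\|dr\|=1$, this is exactly $|\Omega_\xi(z)|^n\omega_{\partial D}$. One must justify that Demailly's measure $\mu_\varphi$ for $\varphi = G_z/2\pi$ indeed has this boundary expression; this is a direct citation of Demailly's construction, since $G_z$ is a continuous plurisubharmonic exhaustion with $(dd^c G_z)^n = (2\pi)^n\delta_z$ (a point mass of total mass $(2\pi)^n$ at $z$, by the standard normalization of the pluricomplex Green function), so $\mu_z$ is a probability measure.

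Once the identity $d\mu_z = \frac{1}{(2\pi)^n}|\Omega_\xi(z)|^n\omega_{\partial D}$ is established, the reproducing formula for $F\in\psh(D)\cap C^0(\overline D)$ is immediate from Demailly's Riesz-type representation
\[
F(z) = \int_{\partial D} F\, d\mu_z - \frac{1}{(2\pi)^n}\int_D |G_z(w)|\, dd^c F(w)\wedge (dd^c G_z)^{n-1}(w),
\]
substituting the computed $d\mu_z$. The pluriharmonic case follows because then $dd^c F = 0$, so the second integral vanishes. The main obstacle I anticipate is the boundary expansion step: one must be careful that $\psi$ is regular enough near $\partial D$ for the restriction-to-$\partial D$ computation of $(dd^c(\psi r))^{n-1}\wedge d^c(\psi r)$ to make sense — a priori Proposition \ref{limitratio} only gives continuity of $\psi$ up to the boundary, not differentiability. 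The resolution, as in \cite{BST}, is that Demailly's construction intrinsically produces the boundary measure as a limit of the smooth approximants and does not actually require pointwise differentiability of the ratio $G_z/r$ at $\partial D$; only the continuous boundary value of the ratio enters, and that is exactly what Proposition \ref{limitratio} supplies. Making this limiting argument airtight — rather than the formal Leibniz expansion — is the technical heart of the proof.
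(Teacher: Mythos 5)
Your plan is correct and is essentially the paper's proof: the paper obtains $d\mu_z$ by combining Proposition \ref{limitratio}, the identity $d\mu_r=\omega_{\partial D}$ for the defining function $r=-\delta_D$, and Demailly's comparison theorem \cite[Th\'eor\`eme 3.8]{Demailly}, which are exactly the three ingredients you assemble, with the reproducing formulas then following by substitution into Demailly's representation. The ``limiting argument'' you flag as the technical heart is precisely that comparison theorem --- it converts the continuous boundary limit of $G_z/r$ into $d\mu_{G_z}=|\Omega_\xi(z)|^n\,d\mu_r$ without any differentiability of the ratio --- so it is a direct citation rather than something that needs to be made airtight by hand.
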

\proof
It follows from \cite[Théorème 3.8]{Demailly} and Proposition \ref{limitratio} that for all $z\in D$,
$d\mu_{G_z}(\xi)=|\Omega_\xi(z)|^nd\mu_r(\xi),$
that is $$d\mu_z(\xi)=\frac{1}{(2\pi)^n} |\Omega_\xi(z)|^n \omega_{\partial D}(\xi).$$
\endproof
\begin{remark}
As a consequence, in agreement with \cite[Th\'eor\`eme 6.1]{Demailly}, the pluriharmonic  measure $\mu_z$ is supported on the subset of strongly pseudoconvex points of the boundary.

\end{remark}

\section{A new characterization of the dilation}\label{sectiondilation}

In this section we apply our results to the study of boundary behaviour of holomorphic mappings. We will consider only the case of  codomains which are of (global) finite type, but similar (more technical) results can be obtained assuming the finite type condition only locally around a boundary point of the codomain. We begin by recalling the relevant definition and results.
\begin{definition}[Dilation]
Let $D\subset\C^n$ be a $\C$-proper convex domain and let  $\xi\in\partial D$ be a point of locally finite type.
Let $D'\subset \C^m$ be a bounded convex domain of finite type. Let $f\colon D\to D'$ be a holomorphic map, and let $p\in D, p'\in D'.$
The {\it dilation} of $f$ at $\xi$, with base-points $p,p'$, is the number $\lambda_{\xi,p,p'}\in (0,+\infty]$ defined by 
 $$\log\lambda_{\xi,p,p'}:=\liminf_{z\to\xi} [k_D(z,p)-k_{D'}(f(z),p')].$$
 The point $\xi$ is a {\it regular contact point} if $\lambda_{\xi,p,p'}<+\infty$.
 \end{definition}

 The definition of regular contact point is independent of the chosen base-points. Moreover,
 if $\xi$ is a regular contact point for $f\colon D\to D'$, then there exists  a point $\eta\in \partial D'$ such that 
 $K\textrm{-}\lim_{z\to\xi}f(z)=\eta$. (For proofs, see \cite[Section 10]{Arosio-Fiacchi}). 
Clearly the dilation   depends on the chosen base-point. However, the pluricomplex Poisson kernel can be used to normalize the dilation, hence making it independent of the base-points.
 \begin{definition}[Normalized dilation]
 Let $D\subset\C^n$ be a $\C$-proper convex domain and let  $\xi\in\partial D$ be a point of locally finite type.
Let $D'\subset \C^m$ be a bounded convex domain of finite type. Let $f\colon D\to D'$ be a holomorphic map, and assume that $\xi$ is a regular contact point with
 $K\textrm{-}\lim_{z\to\xi}f(z)=\eta\in \partial D'$.
The {\it normalized dilation} $\alpha_\xi$ of $f$ at $\xi$ is 
$$\alpha_\xi:=\lambda_{\xi,p,p'}\frac{\Omega^D_\xi(p)}{\Omega^{D'}_{\eta}(p')}\in (0,+\infty),$$ where $p\in D,p'\in D'$.
 \end{definition}
 The number $\alpha_\xi$ does not depend on $p,p'$, see \cite[Section 11]{Arosio-Fiacchi}.
  \begin{remark}
  Let $f\colon \D\to \D$ be holomorphic. Then by the classical Julia--Wolff--Carath\'eodory theorem (see e.g. \cite[Theorem 1.7.3]{booksemigroup})
  the point    $\xi\in \partial \D$ is a regular contact point iff $f$ has finite angular derivative at $\xi$ and $f(\xi)\in \partial \D$. 
Moreover if this is the case we have 
\begin{equation}\label{diskcase}|f'(\xi)|=\alpha_\xi=\lambda_{\xi,0,0}.
\end{equation}
 \end{remark}

The following result summarizes the principal known results concerning the  dilation. 
\begin{theorem}\label{J-JWC}Let $D\subset\C^n$ be a $\C$-proper convex domain and let  $\xi\in\partial D$ be a point of locally finite type. Let $D'\subset \C^{m}$ be a bounded convex domain of finite type. Let $f\colon D\to D'$ be a holomorphic map, and assume that $\xi$ is a regular contact point with
 $K\textrm{-}\lim_{z\to\xi}f(z)=\eta\in \partial D'$. Then
\begin{enumerate}
\item[(MJ)]$\sup_{z\in D} [h^{D'}_{\eta,p'}(f(z))-h^D_{\xi,p}(z)]=\log\lambda_{\xi,p,p'},$
\item[(PJ)]$\sup_{z\in D} \frac{\Omega^D_{\xi}(z)}{\Omega^{D'}_\eta(f(z))}=\alpha_\xi,$
\item[(JWC)] $K'\textrm{-}\lim_{z\to\xi}\langle df_z(n_\xi),n_\eta\rangle=\alpha_\xi.$
\end{enumerate}
\end{theorem}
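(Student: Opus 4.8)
The plan is to prove the three statements (MJ), (PJ), (JWC) by reducing each of them to the one-dimensional Julia--Wolff--Carath\'eodory theorem via complex geodesics, exactly as one does in the classical disc case, using the tools assembled in the earlier sections. The three statements are logically linked: (PJ) follows from (MJ) by taking logarithms and using the Poisson-horofunction formula (Proposition~\ref{omegahorofunction}) in both $D$ and $D'$, since $h^{D'}_{\eta,p'}(f(z))-h^D_{\xi,p}(z) = \log|\Omega^{D'}_\eta(p')| - \log|\Omega^{D'}_\eta(f(z))| - \log|\Omega^D_\xi(p)| + \log|\Omega^D_\xi(z)|$, so the supremum over $z\in D$ of the left side of (MJ) equals $\log\big(\sup_z \Omega^D_\xi(z)/\Omega^{D'}_\eta(f(z))\big) - \log\big(\Omega^D_\xi(p)/\Omega^{D'}_\eta(p')\big)$, and then one just uses the definition $\alpha_\xi = \lambda_{\xi,p,p'}\,\Omega^D_\xi(p)/\Omega^{D'}_\eta(p')$. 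So the real content is (MJ) together with (JWC).

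For (MJ), I would first prove the inequality $h^{D'}_{\eta,p'}(f(z)) - h^D_{\xi,p}(z) \le \log\lambda_{\xi,p,p'}$ for every $z\in D$, which amounts to a Julia-type lemma. Fix $z$ and pick a complex geodesic $\varphi\colon\D\to D$ with $\varphi(0)=z$ and endpoint $\xi$. Applying the definition of $\lambda$ and the distance-decreasing property of $f$ along the geodesic ray $\tilde\varphi(t)$, and using that $k_{D'}(f(\tilde\varphi(t)),p') = k_{D'}(f(\tilde\varphi(t)),f(z)) + O(1)$ is controlled by $k_D(\tilde\varphi(t),z) = t$, one compares the horofunction cocycles; concretely $h^{D'}_{\eta,p'}(f(z)) - h^D_{\xi,z}(f\text{-image data}) \le \liminf_{t\to\infty}[k_D(\tilde\varphi(t),p) - k_{D'}(f(\tilde\varphi(t)),p')]$, and the right-hand side is $\le \log\lambda_{\xi,p,p'}$ by definition of the liminf over \emph{all} approaches to $\xi$. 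Here one must be careful that $f(\tilde\varphi(t))$ does converge to $\eta$ and stays in a suitable $K$-region, which is exactly where the hypothesis that $\xi$ is a regular contact point with $K\text{-}\lim f = \eta$ enters (via \cite[Section~10]{Arosio-Fiacchi}). For the reverse inequality (that the supremum is actually attained, or at least approached), one takes a sequence $z_n\to\xi$ realizing the liminf in the definition of $\log\lambda_{\xi,p,p'}$, passes to $K'$-convergent subsequences (using that regular contact points force $K'$-convergence along minimizing sequences, cf. \cite{Arosio-Fiacchi}), and evaluates the horofunctions at these points, using continuity of $h_{\xi,\cdot}$ and $h_{\eta,\cdot}$ and the fact that $h^D_{\xi,p}(z_n) = k_D(z_n,p) + o(1)$ along $K'$-convergent sequences to pass to the limit.

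For (JWC), the strategy is to restrict $f$ to a complex geodesic $\varphi$ of $D$ with endpoint $\xi$ and compose with a holomorphic left inverse $\tilde\rho'$ of a complex geodesic $\psi$ of $D'$ with endpoint $\eta$, reducing to a self-map of $\D$, then apply the disc JWC theorem together with formula \eqref{diskcase}. More precisely, one forms $g := \mathscr C^{-1}\circ\langle\,\cdot\,-\eta, n_\eta\rangle\circ f\circ\varphi\colon \D\to\D$ and shows it has angular derivative at $1$ equal to (a constant multiple of) $\alpha_\xi\,\varphi'_N(1)$, using (PJ) just proved, the normalization of $\alpha_\xi$, and Corollary~\ref{forPoigeo} which computes $\Omega_\xi$ along geodesics. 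One then uses the Julia--Wolff--Carath\'eodory equality $g'(1) = \angle\lim_{\zeta\to1} g'(\zeta)$ and unwinds the chain rule $g'(\zeta) = (\mathscr C^{-1})'(\cdots)\,\langle df_{\varphi(\zeta)}(\varphi'(\zeta)), n_\eta\rangle$; the subtle point is that $\varphi'(\zeta)$ need not have a boundary limit (tangential geodesics, cf. Example~\ref{egggeodesics}), so one must split $\varphi'(\zeta)$ into its normal component (which has non-tangential limit $\varphi'_N(1)>0$ by \eqref{varphi'_N(1)}) and the rest, and argue that only the normal component contributes in the limit of $\langle df_{\varphi(\zeta)}(\varphi'(\zeta)),n_\eta\rangle$. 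This requires controlling $df$ applied to complex-tangential directions near $\xi$, which should follow from the finite-type estimates and the $K'$-limit machinery of \cite[Section~11]{Arosio-Fiacchi}; independence of the final $K'$-limit from the chosen geodesic $\varphi$ is then handled by strong asymptoticity (Theorem~\ref{strgasy}).

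The main obstacle I anticipate is (JWC): specifically, passing from the statement ``$g$ has angular derivative $\alpha_\xi\varphi'_N(1)$'' for self-maps of the disc to the genuinely $n$-variable statement ``$K'\text{-}\lim_{z\to\xi}\langle df_z(n_\xi),n_\eta\rangle = \alpha_\xi$'' requires knowing that a $K'$-convergent sequence in $D$ can be tracked back along complex geodesics with controlled derivatives, and that the complex-tangential part of $df_z$ does not blow up too fast relative to the normal part. This is precisely the kind of delicate boundary estimate for which one needs the full strength of the Julia--Wolff--Carath\'eodory theory on convex finite-type domains developed in \cite{Arosio-Fiacchi}, and I would expect the proof in the paper to cite \cite[Theorem~1.3, Lemma~11.12]{Arosio-Fiacchi} (already invoked in the proof of Proposition~\ref{contpole}) to handle exactly this step. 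By contrast, (MJ) and (PJ) are comparatively soft once the Poisson-horofunction formula and the definition of $\lambda_{\xi,p,p'}$ are in place.
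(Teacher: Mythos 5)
The paper does not actually reprove (MJ) or (JWC): its proof is a citation, namely (MJ) is \cite[Proposition 10.10]{Arosio-Fiacchi} and (JWC) is \cite[Theorem 12.1]{Arosio-Fiacchi}, and the only argument carried out in the paper is the deduction of (PJ) from (MJ) via the Poisson-horofunction formula (Proposition \ref{omegahorofunction}) and the definition of $\alpha_\xi$. That deduction is exactly the computation you give, so your treatment of (PJ) coincides with the paper's.

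Your attempt to (re)prove the two cited ingredients is a different route and only partly solid. For (MJ), the Julia-lemma upper bound is the right idea, but two steps need repair: (a) you assert that a sequence realizing the liminf defining $\lambda_{\xi,p,p'}$ can be taken $K'$-convergent; this is not justified at this stage (and proving it here would risk circularity with Proposition \ref{K'dilation}(i), which the paper derives \emph{from} (MJ)). The lower bound $\sup\geq\log\lambda_{\xi,p,p'}$ is obtained more simply along the real geodesic ray through $p$ with endpoint $\xi$, where $h^D_{\xi,p}=-k_D(\cdot,p)$ exactly, combined with the general inequality $h^{D'}_{\eta,p'}(w')\geq -k_{D'}(w',p')$. (b) The claim that $h^D_{\xi,p}(z_n)=-k_D(z_n,p)+o(1)$ along arbitrary $K'$-convergent sequences is false: already in $\D$ one has $h^\D_{1,0}(\zeta)+k_\D(\zeta,0)=2\log\frac{|1-\zeta|}{1-|\zeta|}$, which tends to a nonzero constant along non-radial rays; only boundedness (the $K$-region condition) holds. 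For (JWC), your sketch does not amount to a proof: restricting $f$ to a single complex geodesic and invoking the disc Julia--Wolff--Carath\'eodory theorem only controls quantities of the form $\langle df_{\varphi(\zeta)}(\varphi'(\zeta)),n_\eta\rangle$ along that geodesic, whereas the statement concerns the $K'$-limit of $\langle df_z(n_\xi),n_\eta\rangle$ over all $K'$-convergent sequences; moreover in the finite-type setting $\varphi'$ may blow up at the boundary (Example \ref{egggeodesics}), so the tangential contribution cannot simply be split off. Those estimates are precisely the content of \cite[Theorem 12.1]{Arosio-Fiacchi}. You acknowledge this and defer to that machinery, which is in effect what the paper does by citation; just be clear that, as written, your (JWC) paragraph is an outline of where the difficulty lies rather than an argument.
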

\proof
(MJ) has been proven in \cite[Proposition 10.10]{Arosio-Fiacchi} and (JWC) in \cite[Theorem~12.1]{Arosio-Fiacchi}. Finally, (PJ) follows from (MJ) and Corollary \ref{omegahorofunction}.

\endproof
\begin{remark}
(MJ) and (PJ) are generalizations of the classical Julia Lemma, while (JWC) is a generalization of the classical Julia--Wolff--Carath\'eodory theorem. For more references and context, we refer to \cite{Arosio-Fiacchi}.
\end{remark} 
We now prove our first result of this section. 	\begin{proposition}\label{K'dilation}
	Let $D\subset\C^n$ be a $\C$-proper convex domain and let  $\xi\in\partial D$ be a point of locally finite type. Let $D'\subset \C^{m}$ be a bounded convex domain of finite type. Let $f\colon D\to D'$ be a holomorphic map, and assume that $\xi$ is a regular contact point with
 $K\textrm{-}\lim_{z\to\xi}f(z)=\eta\in \partial D'$. Let $p\in D, p'\in D'$. Then
	\begin{itemize}
		\item[(i)] $ K'\textrm{-}\lim_{z\to\xi}[k_D(z,p)-k_{D'}(f(z),p')]=\log\lambda_{\xi,p,p'},$
		\item[(ii)] $K'\textrm{-}\lim_{z\to\xi}[h^{D'}_{\eta,p'}(f(z))-h^D_{\xi,p}(z)]=\log\lambda_{\xi,p,p'},$
		\item[(iii)] $ K'\textrm{-}\lim_{z\to\xi}\frac{\Omega^D_{\xi}(z)}{\Omega^{D'}_\eta(f(z))}=\alpha_\xi.$
	\end{itemize}
\end{proposition}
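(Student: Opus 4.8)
The plan is to observe that (i), (ii), (iii) are equivalent reformulations of one statement, to read off the ``$\le$''-directions from the Julia-type results of Theorem~\ref{J-JWC}, and to obtain the matching ``$\ge$''-directions by reducing $\Omega$ to the half-plane Poisson kernel via Proposition~\ref{OmegaDvsH}.

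I would first record the equivalences. By the Poisson--horofunction formula (Proposition~\ref{omegahorofunction}) and the definition of $\alpha_\xi$ one has
\[
e^{\,h^{D'}_{\eta,p'}(f(z))-h^D_{\xi,p}(z)}=\frac{|\Omega^{D'}_\eta(p')|}{|\Omega^D_\xi(p)|}\,\frac{\Omega^D_\xi(z)}{\Omega^{D'}_\eta(f(z))},
\]
so (ii) $\Leftrightarrow$ (iii). For (i) $\Leftrightarrow$ (ii), the difference of the two bracketed quantities equals $[k_D(z,p)+h^D_{\xi,p}(z)]-[k_{D'}(f(z),p')+h^{D'}_{\eta,p'}(f(z))]$; by Theorem~\ref{Kobestimate} together with Proposition~\ref{omegahorofunction}, $k_D(z,p)+h^D_{\xi,p}(z)=\log 2-\log\big(\delta_D(z)|\Omega^D_\xi(z)|\big)+o(1)$ as $z\to\xi$, and the same holds in $D'$ (legitimately, since along a $K'$-converging sequence $(z_n)$ one has $f(z_n)\to\eta$, as $K\textrm{-}\lim f=\eta$). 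Thus (i) $\Leftrightarrow$ (ii) reduces to $\delta_{D'}(f(z_n))\,|\Omega^{D'}_\eta(f(z_n))|\sim\delta_D(z_n)\,|\Omega^D_\xi(z_n)|$ along $K'$-sequences, an asymptotic that will come out of the same half-plane comparison. So it is enough to prove (iii) together with this relation.

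For (iii): $\limsup_{z\to\xi}\Omega^D_\xi(z)/\Omega^{D'}_\eta(f(z))\le\alpha_\xi$ is exactly (PJ). For the lower bound, let $(z_n)$ $K'$-converge to $\xi$. Proposition~\ref{OmegaDvsH} applied in $D$ gives $\Omega^D_\xi(z_n)\sim\Omega^{\H}_0(\langle z_n-\xi,n_\xi\rangle)$; granting that $(f(z_n))$ $K'$-converges to $\eta$ in $D'$, Proposition~\ref{OmegaDvsH} applied in $D'$ gives $\Omega^{D'}_\eta(f(z_n))\sim\Omega^{\H}_0(\langle f(z_n)-\eta,n_\eta\rangle)$. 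Since $\Omega^{\H}_0(\zeta)=2\Re(1/\zeta)$ and both normal components tend to $0$ non-tangentially in their half-planes, an elementary estimate gives $\Omega^{\H}_0(\langle z_n-\xi,n_\xi\rangle)/\Omega^{\H}_0(\langle f(z_n)-\eta,n_\eta\rangle)\to\alpha_\xi$ provided $\langle f(z_n)-\eta,n_\eta\rangle/\langle z_n-\xi,n_\xi\rangle\to\alpha_\xi$, which is the Lindelöf-type companion of (JWC). The very same picture yields the relation $\delta_{D'}(f(z_n))\,|\Omega^{D'}_\eta(f(z_n))|\sim\delta_D(z_n)\,|\Omega^D_\xi(z_n)|$ needed for (i) $\Leftrightarrow$ (ii) (each side being asymptotically $2|\Re\zeta|\,\delta/|\zeta|^2$, the factor $\alpha_\xi$ being absorbed once one knows $\delta_{D'}(f(z_n))/\delta_D(z_n)\to\alpha_\xi$, which is equivalent to (i)); so in practice I would run the half-plane comparison once and extract (i) and (iii) simultaneously, after which (ii) is immediate.

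The real work is in the two inputs fed into the argument: that $f$ sends $K'$-converging sequences to $K'$-converging sequences, and that $K'\textrm{-}\lim_{z\to\xi}\langle f(z)-\eta,n_\eta\rangle/\langle z-\xi,n_\xi\rangle=\alpha_\xi$. I would derive both from the boundary analysis of holomorphic maps between $\C$-proper convex domains of finite type in \cite{Arosio-Fiacchi}: the first from the extrinsic characterisation of $K$- and $K'$-regions through the multitype, combined with the fact that $f$ does not increase $k_D$; the second from the Julia--Wolff--Carath\'eodory theorem proved there, of which (JWC) above is the derivative form. Once these are available the rest is bookkeeping, and the point where care is required is precisely the passage from the angular-derivative statement (JWC) to the difference-quotient statement, i.e.\ the Lindelöf principle at a regular contact point.
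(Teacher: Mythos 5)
Your reformulations are fine as far as they go: (ii)$\Leftrightarrow$(iii) via Proposition \ref{omegahorofunction}, the reduction of (i)$\Leftrightarrow$(ii) to a ratio of boundary distances via Theorem \ref{Kobestimate}, and the upper bounds coming from (MJ)/(PJ) in Theorem \ref{J-JWC} are all correct. The genuine gap lies in the two inputs you feed into the half-plane comparison, neither of which you prove and neither of which is available among the results you may quote. First, to apply Proposition \ref{OmegaDvsH} in $D'$ (and to control $\delta_{D'}(f(z_n))$) you need that $f$ maps $K'$-convergent sequences at $\xi$ to $K'$-convergent sequences at $\eta$. What Julia's lemma (MJ) gives for free is only that the image sequence stays in a $K$-region of $\eta$, i.e.\ $K$-convergence, and your proposed derivation (extrinsic multitype characterisation of the regions plus the fact that $f$ does not increase the Kobayashi distance) yields no more than that. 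But $K$-convergence is not enough: the remark following Theorem \ref{deltaratio} (the map $f(z_0,z_1)=z_0$ on $\B^2$ and the curves $\gamma_\lambda$) shows that along merely $K$-convergent sequences every one of the limits in (i)--(iii) can fail. Upgrading $K$ to $K'$ for the image amounts to showing that $f\circ\varphi$ is asymptotic to a complex geodesic of $D'$, a nontrivial piece of Julia--Wolff--Carath\'eodory analysis, not a formal consequence of the quoted statements. Second, your lower bound also invokes the Lindel\"of-type difference-quotient statement $K'\textrm{-}\lim_{z\to\xi}\langle f(z)-\eta,n_\eta\rangle/\langle z-\xi,n_\xi\rangle=\alpha_\xi$, whereas the quoted (JWC) is the derivative form; you yourself flag the passage from the derivative to the difference quotient as ``the point where care is required'' and then do not supply it. With both facts assumed rather than proved, the argument is incomplete (and note also that your route to (i) must produce $\delta_{D'}(f(z_n))/\delta_D(z_n)\to\alpha_\xi$ independently, since by Theorem \ref{Kobestimate} that limit is equivalent to (i)).

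For contrast, the paper's proof is designed precisely to avoid any intrinsic control of the image sequence in $D'$: it composes $f$ with a complex geodesic $\varphi$ of $D$ through $p$ at the source and with a holomorphic left inverse $\tilde\rho$ of a complex geodesic of $D'$ through $p'$ at the target, so that all three limits are read off from the scalar self-map $\tilde\rho\circ f\circ\varphi$ of the disc, using the classical one-variable Julia--Wolff--Carath\'eodory theorem together with the chain rule for dilations and the transfer from $z_n$ to $\varphi(\zeta_n)$ built into the definition of $K'$-convergence. If you want to salvage your approach, you would have to prove the two missing inputs, which is essentially re-doing that analysis in a harder form.
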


\proof
Let $\varphi\colon\D\to D$ be a complex geodesic with $\varphi(0)=p$ and with endpoint $\xi$, and let $\psi\colon\D\to D'$ be a complex geodesic with $\psi(0)=p'$ and with endpoint $\eta$. Let $\tilde\rho\colon D'\to\D$ be the left inverse of $\psi$.
By \cite[Proposition 10.10]{Arosio-Fiacchi} we have $\lambda_{\eta,p',0}(\tilde\rho)=1$. Moreover  $\lambda_{1,0,p}(\varphi)=1$, and  
\begin{equation}\label{eqjul}h^{D}_{\xi,p}(\varphi(\zeta))=h^{\D}_{1,0}(\zeta),\quad \forall \zeta\in \D.
\end{equation}
 By the chain rule for the  dilation (\cite[Corollary 10.13]{Arosio-Fiacchi})  it follows that
\begin{equation}\label{normdilcalculation}
\lambda_{1,0,0}(\tilde\rho\circ f\circ \varphi)=\lambda_{\eta,p',0}(\tilde\rho)\cdot\lambda_{\xi,p,p'}(f)\cdot \lambda_{1,0,p}(\varphi)=\lambda_{\xi,p,p'}(f).\end{equation}

Let $(z_n)$ be a sequence  in $D$ that $K'$-converges to $\xi$. 
By definition there exists a sequence $(\zeta_n)$ in $\D$ converging to $1$ non-tangentially such that $k_D(z_n,\varphi(\zeta_n))\to0$.

(i) By definition 
$$\liminf_{n\to+\infty}[k_{D}(z_n,p)-k_{D'}(f(z_n),p')]\geq\log\lambda_{\xi,p,p'}.$$
On the other hand, by (\refeq{diskcase}),
\begin{align*}&\limsup_{n\to+\infty}[k_{D}(z_n,p)-k_{D'}(f(z_n),p')]\leq\limsup_{n\to+\infty}[k_{D}(\varphi(\zeta_n),p)-k_{D'}(f(\varphi(\zeta_n)),p')+2k_D(z_n,\varphi(\zeta_n))]
	\\\leq&\limsup_{n\to+\infty}[k_{\D}(\zeta_n,0)-k_{D'}(f(\varphi(\zeta_n)),p')]
	\leq\limsup_{n\to+\infty}[k_{\D}(\zeta_n,0)-k_{\D}(\tilde\rho(f(\varphi(\zeta_n))),0)]
	=\log\lambda_{1,0,0}(\tilde\rho\circ f\circ \varphi),
	\end{align*}
	and thus the result follows from \eqref{normdilcalculation}.

(ii) By (MJ) in Theorem \ref{J-JWC} we have, for all $z\in D$,
$$h^{D'}_{\eta,p'}(f(z))-h^D_{\xi,p}(z)\leq\log\lambda_{\xi,p,p'}.$$
Moreover, by \eqref{eqjul} we have, for $n\geq 0$,
\begin{align*}h^{D'}_{\eta,p'}(f(z_n))-h^D_{\xi,p}(z_n)&\geq h^{D'}_{\eta,p'}(f(\varphi(\zeta_n)))-h^{D}_{\xi,p}(\varphi(\zeta_n))-2k_D(z_n,\varphi(\zeta_n))\\&\geq h^\D_{1,0}(\tilde\rho(f(\varphi(\zeta_n))))-h^{\D}_{1,0}(\zeta_n)-2k_D(z_n,\varphi(\zeta_n)),\end{align*}
so by (\refeq{ntpoidisk}) and (\refeq{diskcase}) (recalling that $h_{1,0}^\D=-\log|\Omega^\D_1|$)
$$\limsup_{n\to+\infty}[h^{D'}_{\eta,p'}(f(z_n))-h^D_{\xi,p}(z_n)]\geq \limsup_{n\to+\infty}[h^\D_{1,0}(\tilde\rho(f(\varphi(\zeta_n))))-h^{\D}_{1,0}(\zeta_n)]=\log\lambda_{1,0,0}(\tilde\rho\circ f\circ \varphi),$$
and thus the result follows from \eqref{normdilcalculation}.

(iii) By the Poisson-horofunction formula (Proposition \ref{omegahorofunction}) we have, for all $n\geq 0$,	$$\frac{\Omega_\xi^D(z_n)}{\Omega_\eta^{D'}(f(z_n))}=\frac{\Omega_\xi^D(p)}{\Omega_\eta^{D'}(p')}\exp(h^{D'}_{\eta,p'}(f(z_n))-h^{D}_{\xi,p}(z_n)),$$
so the result follows from (ii).
\endproof

It is remarked in \cite{AbateSymp} that, assuming $D,D'$ bounded strongly convex, the point     $\xi\in\partial D$ is a regular contact point if and only if 
\begin{equation}\label{distdil}\liminf_{z\to\xi}\frac{\delta_{D'}(f(z))}{\delta_{D}(z)}<+\infty.
\end{equation}
The exact value of this liminf was previously unknown. Thanks Theorem \ref{Kobestimate} we are able to show that this value is the normalized dilation.
	\begin{theorem}\label{deltaratio}
	Let $D\subset\C^n$ be a $\C$-proper convex domain and let  $\xi\in\partial D$ be a point of locally finite type. Let $D'\subset \C^{m}$ be a bounded convex domain of finite type. Let $f\colon D\to D'$ be a holomorphic map. Then  $\xi$ is a regular contact point if and only if \eqref{distdil}  holds, and
	$$\liminf_{z\to\xi}\frac{\delta_{D'}(f(z))}{\delta_{D}(z)}=K'\textrm{-}\lim_{z\to\xi}\frac{\delta_{D'}(f(z))}{\delta_{D}(z)}=\alpha_\xi.$$
	\end{theorem}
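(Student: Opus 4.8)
The plan is to reduce the statement to the already-available metric estimates by rewriting the ratio $\delta_{D'}(f(z))/\delta_D(z)$ in terms of Kobayashi distances and the pluricomplex Poisson kernels of $D$ and $D'$. The key observation is Theorem~\ref{Kobestimate}: for $p\in D$ and $p'\in D'$ we have, as $z\to\xi$,
\begin{equation*}
k_D(z,p)+\log\delta_D(z)\longrightarrow -\log\frac{|\Omega^D_\xi(p)|}{2},
\end{equation*}
while, since $\xi$ is a regular contact point so that $K\text{-}\lim_{z\to\xi}f(z)=\eta\in\partial D'$ exists, one has at least along sequences $K'$-converging to $\xi$ (and hence with $f(z)\to\eta$) the companion estimate
\begin{equation*}
k_{D'}(f(z),p')+\log\delta_{D'}(f(z))\longrightarrow -\log\frac{|\Omega^{D'}_\eta(p')|}{2}.
\end{equation*}
Subtracting, for a sequence $(z_n)$ that $K'$-converges to $\xi$,
\begin{equation*}
\log\frac{\delta_{D'}(f(z_n))}{\delta_D(z_n)}
=\bigl[k_D(z_n,p)-k_{D'}(f(z_n),p')\bigr]+o(1)+\log\frac{|\Omega^{D'}_\eta(p')|}{|\Omega^D_\xi(p)|},
\end{equation*}
and by part (i) of Proposition~\ref{K'dilation} the bracketed term $K'$-converges to $\log\lambda_{\xi,p,p'}$. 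Hence
\begin{equation*}
K'\text{-}\lim_{z\to\xi}\frac{\delta_{D'}(f(z))}{\delta_D(z)}
=\lambda_{\xi,p,p'}\,\frac{|\Omega^{D'}_\eta(p')|}{|\Omega^D_\xi(p)|}
=\alpha_\xi,
\end{equation*}
using the definition of the normalized dilation and the fact that both $\Omega$'s are negative. This settles the $K'$-limit half of the theorem once $\xi$ is known to be a regular contact point.

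There remain two things to nail down. First, the equivalence ``$\xi$ regular contact point $\iff$ \eqref{distdil} holds'': the forward direction follows from the computation above, since if $\xi$ is regular then along $K'$-converging sequences the ratio has a finite limit, and one uses that normal-approach sequences $K'$-converge (by \cite[Theorem~7.9]{Arosio-Fiacchi}) to exhibit one such sequence, giving $\liminf_{z\to\xi}\delta_{D'}(f(z))/\delta_D(z)\le\alpha_\xi<+\infty$. For the converse, suppose \eqref{distdil} holds, say with a sequence $(z_n)\to\xi$ realizing the finite liminf; by the distance-to-boundary estimate $|\log(\delta_{D'}(w')/\delta_D(w))|\le k_{D\times D'}$-type bounds (more precisely, the one-sided comparison $k_D(z,p)+\log\delta_D(z)$ is bounded below, combined with $-\log\delta_{D'}(f(z))\le k_{D'}(f(z),p')+C$ on $D'$) one gets $\liminf_{z\to\xi}[k_D(z,p)-k_{D'}(f(z),p')]<+\infty$, i.e.\ $\lambda_{\xi,p,p'}<+\infty$, so $\xi$ is regular. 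Second, and this is where the real content lies: one must upgrade the $K'$-limit to a full $\liminf$ over all sequences $z\to\xi$, i.e.\ show
\begin{equation*}
\liminf_{z\to\xi}\frac{\delta_{D'}(f(z))}{\delta_D(z)}=\alpha_\xi,
\end{equation*}
with no restriction on the approach. The $\le$ inequality is free (take a normal-approach sequence). For $\ge$, the argument must produce a lower bound $\delta_{D'}(f(z))\ge(\alpha_\xi-\varepsilon)\delta_D(z)$ for \emph{all} $z$ near $\xi$; here one invokes the Julia-type inequality (PJ) of Theorem~\ref{J-JWC}, $\Omega^D_\xi(z)/\Omega^{D'}_\eta(f(z))\le\alpha_\xi$ for all $z\in D$, together with the unconditional lower bound on $k_D(z,p)+\log\delta_D(z)$ from the lower-bound half of the proof of Theorem~\ref{Kobestimate} and the unconditional upper bound $-\log\delta_{D'}(f(z))\le k_{D'}(f(z),p')+O(1)$ (the general comparison $-\log\delta_{D'}+C_1\le k_{D'}(\cdot,p')\le-\log\delta_{D'}+C_2$ near $\partial D'$, valid since $D'$ is bounded convex with $C^2$ boundary)—but these two ``soft'' bounds only give boundedness, not the sharp constant, so the sharp $\ge$ must come from combining them with the Julia inequality in the form $|G^{D'}_\eta$-type control$|$, exactly as in the proof of Theorem~\ref{Kobestimate}'s upper bound where a general sequence is compared to the $K'$-convergent auxiliary point $w_n:=\xi_n-t_nn_{\xi_n}$.

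The main obstacle is precisely this last point: removing the $K'$-restriction in the $\liminf$. The clean way is to run the argument already used for the upper bound in the proof of Theorem~\ref{Kobestimate}: given an arbitrary sequence $z_n\to\xi$, set $\xi_n:=\pi(z_n)$, $t_n:=\max\{\|\xi-\xi_n\|,\delta_D(z_n)\}$, and $w_n:=\xi_n-t_nn_{\xi_n}$, which $K'$-converges to $\xi$; one controls $k_D(z_n,w_n)$ via Lemma~\ref{lemmaestimate} and $\delta_D(z_n)\le\delta_D(w_n)$, pushes these comparisons through $f$ using non-expansiveness of $k_{D'}$ and the lower-bound estimate of Theorem~\ref{Kobestimate} applied in $D'$ at $\eta$, and concludes $\liminf_n \delta_{D'}(f(z_n))/\delta_D(z_n)\ge\alpha_\xi$. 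I expect the bookkeeping—tracking the $o(1)$ errors from $k_D(z_n,w_n)$, from $\delta_D(z_n)$ vs.\ $\delta_D(w_n)$, and from the two-sided distance estimates in $D'$ simultaneously, while only using that $f(w_n)\to\eta$ and $f(z_n)\to\eta$ (the latter because $z_n\to\xi$ forces $f(z_n)\to\eta$ once $\xi$ is regular, by the $K$-limit statement extended to full boundary limits, which one should check)—to be the delicate part, but it is a routine adaptation of the two-step structure (``$K'$ first, then general sequences via $w_n$'') already executed in Theorem~\ref{Kobestimate}.
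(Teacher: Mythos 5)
Your proposal is correct in substance. For the $K'$-limit it follows the paper's own route: Theorem \ref{Kobestimate} applied in $D$ at $\xi$ and in $D'$ at $\eta$ (legitimate along $K'$-converging sequences because $K'$-convergence implies $K$-convergence, so $f(z_n)\to\eta$), combined with item (i) of Proposition \ref{K'dilation}. Where you genuinely diverge is the unrestricted liminf. The paper takes a sequence realizing the liminf in \eqref{distdil}, shows via \cite[Lemmas 3.8 and 10.4]{Arosio-Fiacchi} that along it $f(z_n)$ converges to the $K$-limit $\eta$, and then exploits that Theorem \ref{Kobestimate} is an \emph{unrestricted} limit in both domains together with the definition of $\lambda_{\xi,p,p'}$ as a liminf. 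You instead compare an arbitrary sequence $(z_n)$ with the auxiliary normal points $w_n=\xi_n-t_n n_{\xi_n}$, as in the upper-bound half of the proof of Theorem \ref{Kobestimate}. This works, and in fact more cleanly than you suggest: since $\pi(z_n)=\pi(w_n)$ and $\delta_D(z_n)\le\delta_D(w_n)$, Lemma \ref{lemmaestimate} gives $k_D(z_n,w_n)\le\log\frac{\delta_D(w_n)}{\delta_D(z_n)}+\varepsilon$, and non-expansiveness plus \cite[Proposition 2.4]{Mercer} in $D'$ give $\log\delta_{D'}(f(z_n))\ge\log\delta_{D'}(f(w_n))-k_D(z_n,w_n)$, whence $\frac{\delta_{D'}(f(z_n))}{\delta_D(z_n)}\ge e^{-\varepsilon}\,\frac{\delta_{D'}(f(w_n))}{\delta_D(w_n)}\to e^{-\varepsilon}\alpha_\xi$ by the $K'$-part applied along $(w_n)$. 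In particular your route never needs $f(z_n)\to\eta$ for arbitrary sequences; that claim, which you flag as ``to be checked,'' is indeed not automatic (a $K$-limit does not control unrestricted approach), and the paper establishes it only for the special liminf-realizing sequence via the cited lemmas. Your soft two-sided distance argument for the equivalence with \eqref{distdil} is also fine, where the paper simply quotes \cite[Proposition 10.15]{Arosio-Fiacchi}.

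Two bookkeeping slips should be fixed. First, the additive constant in your subtraction is inverted: since $\delta_D(z)\approx\frac{2}{|\Omega^D_\xi(p)|}e^{-k_D(z,p)}$ and $\delta_{D'}(f(z))\approx\frac{2}{|\Omega^{D'}_\eta(p')|}e^{-k_{D'}(f(z),p')}$, the correct identity is $\log\frac{\delta_{D'}(f(z_n))}{\delta_D(z_n)}=[k_D(z_n,p)-k_{D'}(f(z_n),p')]+\log\frac{|\Omega^D_\xi(p)|}{|\Omega^{D'}_\eta(p')|}+o(1)$; with your reciprocal constant the $K'$-limit would come out as $\lambda_{\xi,p,p'}|\Omega^{D'}_\eta(p')|/|\Omega^D_\xi(p)|$, which is not $\alpha_\xi$. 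Second, in the converse direction of the equivalence you need $k_D(z,p)+\log\delta_D(z)$ bounded \emph{above} near $\xi$ (from the inner tangent ball at points of $\partial D$ near $\xi$) and $k_{D'}(f(z),p')+\log\delta_{D'}(f(z))$ bounded \emph{below} (from convexity of $D'$); you state one of these with the wrong orientation, but the correct one-sided bounds are available and the argument goes through.
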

\proof
The equivalence between the regularity of $\xi$ and \eqref{distdil} it has been proven in \cite[Proposition 10.15]{Arosio-Fiacchi}.
Let $(z_ {n})$ be a sequence that realizes the limit inferior  \eqref{distdil}. Clearly we can suppose that $f(z_{n})$ converges to a boundary point $\eta\in\partial D'$.
By  \cite[Lemma 3.8]{Arosio-Fiacchi}, the sequence $(k_{D}(w_n,p)-k_{D}(f(w_n),p'))$ is bounded. It then follows by \cite[Lemma 10.4]{Arosio-Fiacchi} that the $K$-limit of $f$ at $\xi$ is $\eta$.
 By Theorem \ref{Kobestimate} we have
$$\lim_{n\to+\infty}\frac{\delta_{D'}(f(z_{n}))}{\delta_{D}(z_{n})}=\frac{\Omega^{D}_\xi(p)}{\Omega^{D'}_\eta(p')}\lim_{n\to+\infty}[\exp(k_{D}(z_{n},p)-k_{D'}(f(z_{n}),p'))]\geq\lambda_{\xi,p,p'}\frac{\Omega^{D}_\xi(p)}{\Omega^{D'}_\eta(p')}=\alpha_\xi.$$

Finally, the previous computations shows, together with (i) in Proposition \ref{K'dilation}, that 
$K'\textrm{-}\lim_{z\to\xi}\frac{\delta_{D'}(f(z))}{\delta_{D}(z)}=\alpha_\xi,$ and thus $\liminf_{z\to\xi}\frac{\delta_{D'}(f(z))}{\delta_{D}(z)}=\alpha_\xi.$
\endproof

\begin{remark}
The results in Proposition \ref{K'dilation} and Theorem \ref{deltaratio} do not hold if we replace $K'$-limits with $K$-limits:
take for example $f\colon\B^2\to\D$ given by $f(z_0,z_1)=z_0$ and for all $\lambda\in\D$ $\gamma_\lambda:[0,1)\to\B^2$ given by $\gamma_\lambda(t)=(t,\lambda\sqrt{1-t^2})$.
Clearly $\alpha_{e_1}=\lambda_{e_1,0,0}=1$ but
$$\lim_{t\to1^-}k_{\B^2}(\gamma_\lambda(t),0)-k_{\D}(f(\gamma_\lambda(t)),0)=-\log(1-|\lambda|^2),$$
$$\lim_{t\to1^-}h^{\D}_{1,0}(f(\gamma_\lambda(t)))-h^{\B^2}_{e_1,0}(\gamma_\lambda(t))=\log(1-|\lambda|^2),$$
$$\lim_{t\to1^-}\frac{\Omega^{\B^2}_{e_1}(\gamma_\lambda(t))}{\Omega^{\D}_{1}(f(\gamma_\lambda(t)))}=1-|\lambda|^2,$$
and
$$\lim_{t\to1^-}\frac{\delta_\D(f(\gamma_\lambda(t)))}{\delta_{\B^2}(\gamma_\lambda(t))}=\frac{1}{1-|\lambda|^2}.$$
\end{remark}

 We end this section studying the holomorphic maps which preserve the pluricomplex  Poisson kernel.
\begin{proposition}
	Let $D\subset\C^n$ be a $\C$-proper convex domain and let  $\xi\in\partial D$ be a point of locally finite type. Let $D'\subset \C^{m}$ be a bounded convex domain of finite type. Let $f\colon D\to D'$ be a holomorphic map. Then the following are equivalent.
	\begin{enumerate}
		\item There exists $\eta\in \partial D'$ and $\alpha>0$ such that 
		$$\alpha\Omega_\eta^{D'}(f(z))=\Omega_\xi^D(z), \quad \forall\, z\in D;$$
		\item for every complex geodesic $\varphi$ with endpoint $\xi$ we have that  $f\circ \varphi$ is a complex geodesic of $D'$;
		\item for  every $z\in D$ there exists a complex geodesic $\varphi$ with $\varphi(0)=z$ and  endpoint $\xi$ such that  $f\circ \varphi$ is a complex geodesic of $D'$.
	\end{enumerate}
\end{proposition}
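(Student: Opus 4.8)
The plan is to establish the cycle of implications $(1)\Rightarrow(2)\Rightarrow(3)\Rightarrow(1)$, using the Poisson-horofunction formula (Proposition \ref{omegahorofunction}) and the behaviour of $\Omega_\xi$ along complex geodesics (Corollary \ref{forPoigeo}) as the main bridge between the metric and potential-theoretic pictures.

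For $(1)\Rightarrow(2)$, suppose $\alpha\,\Omega^{D'}_\eta(f(z))=\Omega^D_\xi(z)$ for all $z\in D$. Let $\varphi\colon\D\to D$ be a complex geodesic with endpoint $\xi$. Composing the identity with $\varphi$ and applying Corollary \ref{forPoigeo} on the source side gives $\alpha\,\Omega^{D'}_\eta(f(\varphi(\zeta)))=\Omega^D_\xi(\varphi(\zeta))=\Omega^\D_1(\zeta)/\varphi'_N(1)$, so along the curve $t\mapsto f(\varphi(t))$ the function $\Omega^{D'}_\eta$ blows up non-tangentially like the half-plane Poisson kernel, which forces $\xi$ to be a regular contact point for $f$ with $K$-limit $\eta$. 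By (PJ) in Theorem \ref{J-JWC}, $\sup_{z\in D}\Omega^D_\xi(z)/\Omega^{D'}_\eta(f(z))=\alpha_\xi$; the hypothesis says this ratio is constantly $\alpha$, so $\alpha=\alpha_\xi$, and equality is attained everywhere. Now I want to translate ``$\Omega^D_\xi/(\Omega^{D'}_\eta\circ f)$ is constant'' into ``$f\circ\varphi$ is a geodesic''. Apply the horofunction formula to both sides: the identity is equivalent (Remark \ref{Rem:horo-impl-cont}) to $h^{D'}_{\eta,p'}(f(z))-h^D_{\xi,p}(z)$ being constant in $z$, i.e. to equality in (MJ) holding at every point, not merely in the supremum. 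Composing with $\varphi$ and using $h^D_{\xi,p}(\varphi(\zeta))=h^\D_{1,0}(\zeta)$ (this is \eqref{eqjul}, for the right base-point $p=\varphi(0)$), one gets $h^{D'}_{\eta,f(\varphi(0))}(f(\varphi(\zeta)))=h^\D_{1,0}(\zeta)$ for all $\zeta$. Feeding this through a left inverse $\tilde\rho$ of a complex geodesic $\psi$ of $D'$ with endpoint $\eta$ (as in the proof of Proposition \ref{K'dilation}) and using that $\tilde\rho\circ f\circ\varphi\colon\D\to\D$ has dilation $1$ at $1$ with its horofunction matching the identity, one concludes via the equality case of the classical Julia Lemma that $\tilde\rho\circ f\circ\varphi$ is an automorphism of $\D$ fixing $1$; since $\psi$ is an isometric embedding, $f\circ\varphi=\psi\circ(\tilde\rho\circ f\circ\varphi)$ is a complex geodesic of $D'$.

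The implication $(2)\Rightarrow(3)$ is immediate from Remark \ref{remarkgeodesic}(3): for each $z\in D$ there exists a complex geodesic $\varphi$ with $\varphi(0)=z$ and endpoint $\xi$, and by $(2)$ its image under $f$ is a geodesic. For $(3)\Rightarrow(1)$, fix $z\in D$ and a complex geodesic $\varphi$ with $\varphi(0)=z$, $\varphi(1)=\xi$ such that $\psi:=f\circ\varphi$ is a complex geodesic of $D'$; let $\eta:=\psi(1)$ — one should first check $\eta$ is independent of $z$, which follows because any two such geodesics through varying base-points must have the same endpoint by the regular-contact-point structure (or by a connectedness argument using $K\textrm{-}\lim_{z\to\xi}f(z)=\eta$). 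Then by Corollary \ref{forPoigeo} applied in $D$ and in $D'$,
\[
\Omega^D_\xi(z)=\frac{\Omega^\D_1(0)}{\varphi'_N(1)}=-\frac{1}{\varphi'_N(1)},\qquad
\Omega^{D'}_\eta(f(z))=\Omega^{D'}_\eta(\psi(0))=-\frac{1}{\psi'_N(1)}.
\]
Thus the ratio $\Omega^D_\xi(z)/\Omega^{D'}_\eta(f(z))=\psi'_N(1)/\varphi'_N(1)$, and it remains to see this is independent of $z$. This is exactly the chain-rule computation \eqref{normdilcalculation}: $\psi'_N(1)/\varphi'_N(1)$ is the dilation at $1$ of $\tilde\rho\circ f\circ\varphi$ for a left inverse $\tilde\rho$ of a fixed geodesic of $D'$ with endpoint $\eta$, equivalently $\lambda_{\xi,z,p'}(f)\cdot\Omega^{D'}_\eta(p')^{-1}\cdot(\text{stuff depending only on }p')$, which by the base-point independence of the normalized dilation $\alpha_\xi$ equals $\alpha_\xi$ up to the fixed constant $\Omega^{D'}_\eta(p')$. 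Hence $\Omega^D_\xi(z)=\alpha_\xi\,\Omega^{D'}_\eta(f(z))$ for all $z$, which is $(1)$ with $\alpha=\alpha_\xi$.

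The main obstacle is the equality-case analysis in $(1)\Rightarrow(2)$: turning the pointwise equality in (MJ)/(PJ) into the rigidity statement that $f$ carries geodesics to geodesics. The clean way to do this is to reduce, via a holomorphic left inverse $\tilde\rho$ of a complex geodesic of $D'$ ending at $\eta$, to the classical one-variable statement that a self-map of $\D$ achieving equality in Julia's Lemma at a boundary fixed point is a rotation-type automorphism; the subtlety is checking that $\tilde\rho\circ f\circ\varphi$ really attains equality (not just the sup) at $1$, which is where the constancy of the horofunction difference — rather than merely its boundedness — is essential. I would isolate this one-variable rigidity as a small lemma and then the rest is bookkeeping with the horofunction formula and the chain rule for dilations already available in the paper.
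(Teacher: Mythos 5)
Your implications $(2)\Rightarrow(3)$ and $(3)\Rightarrow(1)$ are essentially the paper's argument: along a geodesic $\varphi$ with $\varphi(0)=z$ whose image under $f$ is a geodesic, $k_D(\varphi(t),z)-k_{D'}(f(\varphi(t)),f(z))\equiv 0$, so the dilation with base-points $z,f(z)$ equals $1$, and base-point independence of the normalized dilation gives $\Omega^D_\xi(z)=\alpha_\xi\,\Omega^{D'}_\eta(f(z))$ (the paper gets this from \cite[Proposition 10.10]{Arosio-Fiacchi}; your "chain-rule" bookkeeping is a more roundabout but correct way to say the same thing, and your worry about $\eta$ being independent of $z$ is settled exactly as you suggest, since $\eta$ is the $K$-limit of $f$ at the regular contact point $\xi$).

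The genuine gap is in $(1)\Rightarrow(2)$, precisely at the rigidity step you yourself flag. From the constancy of the horofunction difference you correctly get $h^{D'}_{\eta,f(\varphi(0))}(f(\varphi(\zeta)))=h^{\D}_{1,0}(\zeta)$, but composing with a left inverse $\tilde\rho$ of a geodesic $\psi$ of $D'$ with endpoint $\eta$ only yields the one-sided inequality $h^{\D}_{1,0}(\tilde\rho(f(\varphi(\zeta))))\le h^{D'}_{\eta,\psi(0)}(f(\varphi(\zeta)))$ (this is (MJ) for $\tilde\rho$, whose dilation is $1$); the pointwise \emph{equality} needed to invoke the equality case of Julia's Lemma is never established, and in fact $\tilde\rho\circ f\circ\varphi$ need not be an automorphism even when (2) holds, since $f\circ\varphi$ may be a geodesic with endpoint $\eta$ different from $\psi$ and the projection $\tilde\rho$ strictly decreases the horofunction off $\psi(\D)$. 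Moreover the final identity $f\circ\varphi=\psi\circ(\tilde\rho\circ f\circ\varphi)$ is unjustified: it requires $f(\varphi(\D))\subset\psi(\D)$, which is not known (if $\tilde\rho\circ f\circ\varphi$ \emph{were} an automorphism you should instead argue that $f\circ\varphi$ admits a holomorphic left inverse and hence is a geodesic). A correct and short repair avoids $\tilde\rho$ altogether: since $|h^{D'}_{\eta,w}(z)|\le k_{D'}(z,w)$, for real $t\in(0,1)$ one gets
\begin{equation*}
k_\D(t,0)=-h^{D'}_{\eta,f(\varphi(0))}(f(\varphi(t)))\le k_{D'}(f(\varphi(t)),f(\varphi(0)))\le k_\D(t,0),
\end{equation*}
so $f\circ\varphi$ is a Kobayashi isometry between two distinct points, and Schwarz--Pick rigidity (via the left inverse of a geodesic of $D'$ through the two image points) shows it is a complex geodesic. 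This equality-case analysis is what the paper outsources to \cite[Theorem 2.4]{BCD}, after noting via \cite[Proposition 12.13]{Arosio-Fiacchi} and (PJ) in Theorem \ref{J-JWC} that $\xi$ is a regular contact point and $\alpha=\alpha_\xi$; as written, your proposal does not close this step.
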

\begin{proof}
	(1) $\Rightarrow$ (2). The point $\xi$ is a regular contact point with $K$-limits $\eta$ by \cite[Proposition 12.13]{Arosio-Fiacchi}. By (PJ) in Theorem \ref{J-JWC} $\alpha$ is the normalized dilation of $f$ at $\xi$.
	Now the result follows as in \cite[Theorem 2.4]{BCD}.
	(2) $\Rightarrow$ (3) is trivial.
	(3) $\Rightarrow$ (1). Fix $z\in D$. The point $\xi$ is a regular contact point with $K$-limit $\eta$. By \cite[Proposition 10.10]{Arosio-Fiacchi}  
	$$\log\lambda_{\xi,z,f(z)}=\lim_{t\to 1-}[k_D(\varphi(t),z)-k_{D'}(f(\varphi(t)),f(z))]=0.$$
By definition of normalized dilation we have
	$\alpha_\xi=\Omega_\xi^D(z)/\Omega_\eta^{D'}(f(z)).$ Since this is true for all $z\in D$, we have (1).
\end{proof}
\begin{remark}
	If $D,D'$ are bounded strongly convex domains, then any map satisfying one of the equivalent conditions of the previous proposition is actually a biholomorphism by \cite{BKZ}. This is no longer the case if $D,D'$ are convex domains of finite type, as the following example shows.
\end{remark}

\begin{example}
Let $m:=(m_1,\cdots,m_{n-1})$ be an array of even integers, and let $\mathbb{E}_m$ be as in Example \ref{ellipsoid}.
If $\xi=(1,0)\in\C\times\C^{n-1}$, then the map $f\colon\mathbb{E}_m\to \B^n$ given by
$$f(z)=(z_0,z_1^{m_1/2},\cdots,z_{n-1}^{m_{n-1}/2})$$
has the property that 
$\Omega^{\B^n}_\xi(f(z))=\Omega^{\mathbb{E}_m}_\xi(z)$ for all $z\in\mathbb{E}_m.$
\end{example}

\section{A counterexample  in the non-convex strongly pseudoconvex case}\label{controesempioanello}
In this section we show that the Poisson-horofunction formula \eqref{poisson-horofunction-intro} fails in general in strongly pseudoconvex domains.
Let $D\subset\C^n$ be a smooth strongly pseudoconvex domain and $\xi\in\partial D$.
According to \cite{BST}, the pluricomplex Poisson kernel $\Omega_\xi$ is the greatest element of the family $\mathscr{G}^D_\xi$ given by
	\begin{equation}\label{MAmaxSTRPSEUDO}\begin{cases*}
		u\mbox{ is plurisubharmonic in }D\\
		u<0 \ \mbox{ in }D\\
		\limsup_{t\to1^-}u(\gamma(t))(1-t)\leq-\Re\frac{2}{\langle\gamma'(1),n_\xi\rangle}, \ \ \forall \gamma\in\Gamma_\xi,
	\end{cases*}
\end{equation}
where  $\Gamma_\xi$ is the family of $C^1$-smooth curves $\gamma\colon[0,1]\to D\cup\{\xi\}$ such that $\gamma(t)\in D$ for $t\in[0,1)$, $\gamma(1)=\xi$ and $\gamma'(1)\notin T_\xi\partial D$.
The function $\Omega_\xi$ is maximal plurisubhamonic \cite[Proposition 2.8]{BST}.

\begin{lemma}\label{hvsomegaannulus}
Let $r\in(0,1)$ and consider the annulus $A_r:=\{z\in\C:r<|z|<1\}$. Then for all $\xi\in\partial A_r$ and  $p\in (r,1)$ the limit 
\begin{equation}\label{horoannulus}
h^{A_r}_{\xi,p}(z):=\lim_{w\to\xi}[k_{A_r}(z,w)-k_{A_r}(w,p)]
\end{equation}
exists uniformly on compact sets of $A_r$. Moreover, the function $u\colon A_r\to\R$ given by $$u(z):=-\exp(-h^{A_r}_{\xi,p}(z))$$ is not harmonic.
\end{lemma}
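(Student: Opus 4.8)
The plan is to exploit the explicit description of the Kobayashi geometry of the annulus. On $A_r$ the Kobayashi distance is not given by an elementary closed formula, but the universal covering map $\pi\colon \D\to A_r$ (via the exponential/elliptic-modular type uniformization) is an isometry for the Poincaré metrics, so $k_{A_r}(z,w)=\inf\{k_\D(\tilde z,\tilde w)\colon \pi(\tilde z)=z,\ \pi(\tilde w)=w\}$. First I would use this to show that the limit in \eqref{horoannulus} exists: as $w\to\xi\in\partial A_r$, the competing geodesic segments in $\D$ from a fixed lift of $z$ to the countably many lifts of $w$ can be controlled, and only finitely many lifts of $w$ are relevant in the limit (those near the boundary fixed point of the deck transformation corresponding to the boundary component containing $\xi$). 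The difference $k_{A_r}(z,w)-k_{A_r}(w,p)$ is $1$-Lipschitz in $z$, so by Ascoli–Arzelà one gets uniform convergence on compacta once pointwise convergence is established; this part is routine and parallels the general argument in the paper for convex finite type domains (the annulus, being a planar domain with $C^\infty$ boundary, is hyperconvex and the relevant horofunction theory applies).

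For the main assertion — that $u(z)=-\exp(-h^{A_r}_{\xi,p}(z))$ is not harmonic — the key step is to obtain a sufficiently explicit formula for $h^{A_r}_{\xi,p}$, or at least for $u$ restricted to a curve or a circle, and then exhibit a failure of the mean value property (or compute the Laplacian and show it is not identically zero). I would fix $\xi=1\in\partial A_r$ (the outer boundary). Using the covering $\D\to A_r$, the horofunction $h^{A_r}_{1,p}$ pulls back to (essentially) a minimum over a $\Z$-orbit of disc horofunctions $h^\D_{\eta_k,\tilde p}$ centered at the distinct boundary fixed points $\eta_k$ of the hyperbolic deck transformation fixing the lift of the outer boundary. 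Concretely, $-h^{A_r}_{1,p}(z)=\sup_k \big(-h^\D_{\eta_k,\tilde p}(\tilde z)\big)$ for a chosen lift $\tilde z$, hence $u=-e^{-h}$ is an infimum of the harmonic (indeed the negative Poisson-kernel-type) functions $z\mapsto -c_k\, P_\D(\tilde z,\eta_k)$; an infimum of harmonic functions is superharmonic and is harmonic only in very degenerate situations. The cleanest way to conclude is to observe that $u$ is superharmonic (being $-e^{-h}$ with $h$ a horofunction, hence $e^{-h}$ subharmonic, or directly as an inf of harmonics after the covering reduction) and then to show it is \emph{strictly} superharmonic somewhere: e.g. on the circle $|z|=\rho$ for $\rho$ close to $1$, $u$ is a nonconstant $2\pi$-periodic function realized as a $\min$ of genuinely different Poisson kernels, so at a point where two of these kernels cross, $u$ fails to be $C^1$, and in particular fails the submean/supermean equality; alternatively one checks $\int_0^{2\pi}u(\rho e^{i\theta})\,d\theta \neq 2\pi\, u(0')$ for an interior point.

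The main obstacle I anticipate is making the covering-space picture precise enough to justify the formula $-h^{A_r}_{1,p}=\sup_k(-h^\D_{\eta_k,\cdot})$ and, in particular, verifying that \emph{more than one} term in the orbit actually achieves the supremum somewhere in $A_r$ (if only one did, $u$ would be harmonic and there would be no counterexample). This is where the multiply-connectedness is essential: one must check that the hyperbolic deck transformation has two distinct fixed points on $\partial\D$ both mapping to the outer boundary circle, and that the corresponding horoball families genuinely interleave inside the fundamental domain. Once that is pinned down, the failure of harmonicity is forced either by a $C^1$ breakdown along the locus where the $\min$ switches branches, or by a direct mean-value computation; I would present whichever is shorter, likely the non-$C^1$ argument since $h^{A_r}_{\xi,p}$ and hence $u$ will visibly have a corner where two Poisson kernels cross, and harmonic functions are real-analytic.

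A short alternative, if the covering computation proves unwieldy, is an indirect argument: \emph{suppose} $u$ were harmonic; then $-\log|u|=h^{A_r}_{\xi,p}$ would be harmonic as well on $A_r$, and a harmonic horofunction on a planar domain would have to be (up to constant) the real part of a holomorphic function, i.e.\ essentially a Poisson kernel of a \emph{simply connected} model, forcing the horoballs $E_p(\xi,R)$ to be the sublevel sets of a single Poisson kernel; but the horoballs in the annulus, being determined by the \emph{infimum} over the homotopy classes of the lifted geodesics, are strictly smaller than any single disc horoball once $R$ is large enough (they must avoid the inner boundary circle $|z|=r$, which a disc-type horoball tangent at $\xi=1$ does not), a contradiction. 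This route sidesteps explicit formulas and pins the obstruction precisely on multiple connectivity, which is the conceptual point the section wants to make.
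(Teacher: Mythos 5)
Your main route is, at bottom, the same covering-space idea as the paper's (the paper works with the universal cover $\Pi=\exp\colon S_r\to A_r$ in the strip model rather than the disc model), but as written it leaves both of its load-bearing steps unproved, and it contains a geometric misidentification. First, the formula $-h^{A_r}_{1,p}=\sup_k\bigl(-h^\D_{\eta_k,\cdot}\bigr)$ requires interchanging the limit $w\to\xi$ with the infimum over infinitely many deck translates in $k_{A_r}(z,w)=\inf_j k_\D(\tilde z,g^j\tilde w)$ and in $k_{A_r}(w,p)$; one needs a uniform ``only finitely many lifts are relevant'' estimate, which is exactly the technical content and is only asserted. Moreover the centers are misdescribed: a hyperbolic deck transformation $g$ has exactly two fixed points on $\partial\D$, and neither is a lift of $\xi$; the relevant centers are the $\Z$-orbit $\{g^k\tilde\xi\}$ of a lift of $\xi$ (which merely accumulates at the two fixed points), and the sentence about ``two distinct fixed points \dots both mapping to the outer boundary circle'' is incorrect, since one complementary arc covers the outer circle and the other the inner one. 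Second, even granting the min formula, you must actually exhibit two branches that are each active on a nonempty open set; you flag this as the main obstacle but do not carry it out (it is true — in the strip model the switch occurs across $\Im z=\pm\pi$, i.e.\ across the slit $(-1,-r)$ — but in the proposal it is a gap). Finally, the ``corner / non-$C^1$'' conclusion should be replaced by the identity-principle argument you hint at: two distinct harmonic functions can meet to first order, but if the min were harmonic it would coincide with one branch on an open set, hence with that branch everywhere, forcing the branches to coincide.

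The ``short alternative'' fails at its first step: if $u=-e^{-h}$ were harmonic one cannot conclude that $h=-\log(-u)$ is harmonic, since the logarithm of a positive harmonic function is superharmonic and is harmonic only when the function is constant ($\Delta\log v=-|\nabla v|^2/v^2$ for $v>0$ harmonic); the subsequent horoball comparison is also too vague to rescue it. For comparison, the paper's proof bypasses both difficulties (a) the global min formula and (b) the two-active-branches check: it restricts to the fundamental domain $Q$ of the strip cover, observes that for $z\in Q$ and $t$ on the real segment the minimum over deck translates is attained at the identity translate (by symmetry and monotonicity in $|\Im z|$), so $u\circ\Pi$ coincides on $Q$ with the harmonic strip Poisson kernel $\nu$; if $u$ were harmonic then $u\circ\Pi$ would be harmonic (as $\Pi$ is a local biholomorphism), hence equal to $\nu$ on all of $S_r$ by real analyticity, contradicting the $2\pi i$-periodicity of $u\circ\Pi$. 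Grafting that shortcut onto your setup would close the gaps; for the existence of the limit the paper simply cites Abate, and your Ascoli--Arzel\`a reduction is fine, but note that pointwise existence again needs the same finitely-many-lifts control you left unproved.
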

\proof
The existence of the limit (\refeq{horoannulus}) is proved in \cite[Proposition 3.3.12]{Abatebooknew} (note that it can also be deduced from \cite[Theorem 3.11, Theorem 4.3]{AFGG}, since on the annulus the squeezing function tends to 1 at the boundary).

Up to a rotation and involution, we can suppose that $\xi=1\in\partial A_r$. Let $S_r:=\{z\in\C: \ln r<\Re z<0\}$ and let  $\Pi\colon S_r\to A_r$ 
be the covering map  $\Pi(z)=\exp(z)$. Notice that 
$ \lim_{z\to0}\Pi(z)=1$.
Consider the fundamental domain 
$$Q:=\{z\in\C: \log r<\Re z<0, -\pi\leq|\Im z|<\pi\}\subset S_r.$$
Recall that the Kobayashi distance of $A_r$ is given, for all $z,w\in A_r$, by
$$k_{A_r}(z,w)=\inf_{\substack{\tilde z\in\Pi^{-1}(z)\\ \tilde w\in\Pi^{-1}(w)}}k_{S_r}(\tilde z,\tilde w).$$
Let $q:=\log p$.  It easy to see that if $z\in Q$ and $t\in S_r\cap \R$ we have
$$k_{S_r}(z,t)=k_{A_r}(\Pi(z),\Pi(t)), \ \ k_{S_r}(t,q)=k_{A_r}(\Pi(t),p)$$
which means that for all $z\in Q$
$$h^{A_r}_{1,p}(\Pi(z)):=\lim_{t\to 0^-}k_{A_r}(\Pi(z),\Pi(t))-k_{A_r}(\Pi(t),p)=\lim_{t\to 0^-}k_{S_r}(z,t)-k_{S_r}(t,q)=h^{S_r}_{0,q}(z).$$
Let $\nu\colon S_r\to \R$ be the function $\nu(z)=-\exp(-h^{S_r}_{0,q}(z))$. It is a harmonic function, indeed up to a multiplicative factor it is the Poisson kernel with pole at $0$ in $S_r$.
Let $u\colon A_r\to\R$ given by $u(z)=-\exp(-h^{A_r}_{1,p}(z))$.  Since $\Pi$ is a local biholomorphism, $u$ is harmonic if and only if $v:=u\circ \Pi$ is harmonic.
Assume by contradiction that $v$ is harmonic on $S_r$. Since it coincides with $\nu$ on $Q$,  $v$ and $\nu$ must coincide on all of $S_r$. However, $v$ is invariant by $F(z)=z+2i\pi$, while $\nu$ is clearly not.\endproof

It follows from the previous result that the  Poisson-horofunction formula \eqref{poisson-horofunction-intro} does not hold in the annulus. We now give a higher dimensional example.
Let $r\in(0,1)$ and consider the domain
$$D:=\left\{z\in\C^2: r<|z_1|<1, |z_2|^2< \sin\left(\frac{\pi\log|z_1|}{\log r}\right)\right\}.$$
By  \cite{defabr}
$D$ is a bounded strongly pseudoconvex domain that is a quotient of the  ball.
The existence of the horofunction in strongly pseudoconvex domains was proved in \cite[Corollary 4.5]{AFGG}.
\begin{proposition}
Let $D\subset \C^2$ be as above.
Let $\xi=(1,0)\in\partial D$ and $p\in D$, then 
$h^D_{\xi,p}$ and $\Omega^D_\xi$ have different sublevel sets. In particular the Poisson-horofunction formula \eqref{poisson-horofunction-intro} does not hold.
\end{proposition}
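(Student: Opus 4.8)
The plan is to test the claimed identity on a one-dimensional slice of $D$ and reduce to the annulus picture of Lemma~\ref{hvsomegaannulus}. Let $E:=\{(z_1,0): r<|z_1|<1\}\subset D$. The map $(z_1,z_2)\mapsto(z_1,0)$ is a holomorphic retraction of $D$ onto $E$, which is biholomorphic to $A_r$; hence $k_D$ restricted to $E$ coincides with $k_{A_r}$, and $h^D_{\xi,p}|_E=h^{A_r}_{1,p_1}$ for $p=(p_1,0)\in E$ (the defining limit of the horofunction exists by \cite[Corollary 4.5]{AFGG} since $D$ is strongly pseudoconvex by \cite{defabr}, and it may therefore be computed along points of $E$). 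Since $\Omega^D_\xi$ is maximal plurisubharmonic by \cite{BST}, the restriction $\Omega^D_\xi|_E$ is a subharmonic function on $A_r$. Replacing $p$ by $(p_1,0)$ only shifts $h^D_{\xi,p}$ by an additive constant, so one may assume $p\in E$.

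Next I would argue by contradiction: if $\Omega^D_\xi$ and $h^D_{\xi,p}$ had the same sublevel sets then, both being continuous on the connected domain $D$ with interval range, one would have $\Omega^D_\xi=\Psi\circ h^D_{\xi,p}$ for a strictly increasing continuous $\Psi$. Restricting to $E$ and pulling back along the universal covering $\exp\colon S_r\to A_r$, the function $g:=\Omega^D_\xi|_E\circ\exp$ would be subharmonic and $2\pi i$-periodic on $S_r$, and on the fundamental domain $Q$ it would equal $\Lambda\circ\nu$, where $\nu=-\exp(-h^{S_r}_{0,q})$ ($q=\log p_1$) is the harmonic, non-constant strip Poisson kernel from the proof of Lemma~\ref{hvsomegaannulus} (this $\nu$ is even in $\Im w$), and $\Lambda(x):=\Psi(-\log(-x))$ is strictly increasing and continuous on $(-\infty,0)$. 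The goal is then to show that no such $g$ can be subharmonic, the obstruction being a kink of $g$ of the wrong sign across the seam $\{\Im w=\pi\}$ joining $Q$ to $Q+2\pi i$.

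To carry this out I would: (i) deduce from ``$\Lambda\circ\nu$ subharmonic, $\nu$ harmonic non-constant'' that $\Lambda$ is convex (a local computation in coordinates where $\nu$ is a real coordinate), hence differentiable with $\Lambda'>0$ off a countable set; (ii) show that $a(t):=\partial_{\Im w}\nu(t,\pi)$ is positive on a non-empty open subset of $(\log r,0)$: it is real-analytic and not identically zero — otherwise $\nu$ would be $2\pi i$-periodic and $-\exp(-h^{A_r}_{1,p_1})$ harmonic, contradicting Lemma~\ref{hvsomegaannulus} — while $\int_{\log r}^0 a(t)\,dt=N'(\pi)>0$, where $N(s):=\int_{\log r}^0\nu(t,s)\,dt$ is concave on $(0,\infty)$ (by harmonicity and the Hopf lemma at the two edges of the strip), with $N(0^+)=-\infty$ and $N(+\infty)=0$, so $N'>0$ on $(0,\infty)$; (iii) choose $w_0=t_0+i\pi$ with $a(t_0)>0$ at which $\Lambda$ is differentiable at $\nu(w_0)$ with $\Lambda'(\nu(w_0))>0$, and note that near $w_0$ the function $g$ equals $\Lambda\circ\nu$ for $\Im w<\pi$ and $\Lambda\circ(\nu(\,\cdot\,-2\pi i))$ for $\Im w>\pi$, with matching values and equal tangential derivatives but opposite transverse derivatives $\pm\Lambda'(\nu(w_0))\,a(t_0)$ (because $\nu$ is even in $\Im w$); (iv) compute the small circular average at $w_0$ to first order, obtaining
\[
\frac{1}{2\pi}\int_0^{2\pi}g\bigl(w_0+\varepsilon e^{i\theta}\bigr)\,d\theta-g(w_0)=-\frac{2\,\Lambda'(\nu(w_0))\,a(t_0)}{\pi}\,\varepsilon+o(\varepsilon),
\]
which is negative for small $\varepsilon>0$, contradicting the sub-mean-value inequality for the subharmonic function $g$. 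Hence the sublevel sets of $\Omega^D_\xi$ and $h^D_{\xi,p}$ differ, and the Poisson-horofunction formula \eqref{poisson-horofunction-intro} fails a fortiori, since it would force $\Omega^D_\xi=-|\Omega^D_\xi(p)|\exp(-h^D_{\xi,p})$, which has precisely the sublevel sets of $h^D_{\xi,p}$.

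The main obstacle, I expect, is step~(ii): showing that the gluing kink of $-\exp(-h^{A_r}_{1,p_1})$ along the negative-real segment of the annulus has ``superharmonic'' sign on an open set, so that it survives any monotone reparametrization. The convexity from step~(i) and the explicit sub-mean-value computation in step~(iv) are what turn this into an actual contradiction, and some care is needed because $\Psi$ (hence $\Lambda$) is only known to be monotone and continuous, not smooth.
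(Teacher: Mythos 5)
Your argument is correct in outline (each step can be justified), but it takes a genuinely different route from the paper's. The paper first pins down the unknown reparametrization: using a complex geodesic $\varphi$ with endpoint $\xi$ (existence from \cite{BFW}) together with $\Omega^D_\xi\circ\varphi=\Omega^{\D}_1/\varphi'_N(1)$ and $h^D_{\xi,p}\circ\varphi=h^{\D}_{1,0}+\mathrm{const}$, it shows that ``same sublevel sets'' already forces $\Omega^D_\xi=-c_p\exp(-h^D_{\xi,p})$; it then identifies $\Omega^D_\xi$ on the slice $A_r\times\{0\}$ with the one--variable kernel $\Omega^{A_r}_1$ by a two--sided comparison of the extremal families $\mathscr{G}^D_\xi$ and $\mathscr{G}^{A_r}_1$, so the contradiction is exactly the non-harmonicity statement of Lemma \ref{hvsomegaannulus}. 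You avoid both ingredients: you use only that $\Omega^D_\xi$ is negative and plurisubharmonic, and therefore must exclude an \emph{arbitrary} increasing continuous reparametrization $\Lambda$ of the periodized strip kernel; you do so by deducing convexity of $\Lambda$ from subharmonicity of $\Lambda\circ\nu$ (with $\nu$ harmonic and non-critical), proving positivity of the seam derivative $a$ on an open set via concavity of $N(s)=\int\nu(t,s)\,dt$ (harmonicity plus Hopf at the edges, and $N\to0$), and a first-order circular-mean computation that violates the sub-mean-value inequality. This buys independence from \cite{BFW} and from the maximality characterization of \cite{BST}, at the price of a more delicate real-analytic argument with several details to nail down: reduce to a real base point $p_1\in(r,1)$ (a base-point change only shifts $h$ by a constant, absorbed into $\Psi$), since the evenness of $\nu$ in $\Im w$ requires $q=\log p_1$ real; patch the local convexity of $\Lambda$ using that every value of $\nu$ is attained at a non-critical interior point; and justify the choice of $t_0$ in step (iii) by noting that $t\mapsto\nu(t+i\pi)$ is non-constant real-analytic, so its image of $\{a>0\}$ contains an interval and hence uncountably many points, at some of which the convex, strictly increasing $\Lambda$ is differentiable with $\Lambda'>0$.
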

\proof
We can suppose that $p=(p_1,0)\in D$. By contradiction, suppose that $h^D_{\xi,p}$ and $\Omega^D_\xi$ have the same sublevel sets, which means that there exists an increasing homeomorphism $Y\colon \R\to (-\infty,0)$ such that for all $z\in D$
$$\Omega^D_\xi(z)=Y(h^D_{\xi,p}(z)).$$
Since $D$ is a smooth strongly pseudoconvex domain by \cite{BFW} there exists a complex geodesic $\varphi\colon\D\to D$ with endpoint $\xi$. Then, for all $\zeta\in\D$,
$$h^D_{\xi,p}(\varphi(\zeta))=h^D_{\xi,\varphi(0)}(\varphi(\zeta))+h^D_{\xi,p}(\varphi(0))=h^\D_{1,0}(\zeta)+h^D_{\xi,p}(\varphi(0))=-\log|\Omega^\D_1(\zeta)|+h^D_{\xi,p}(\varphi(0)).$$
By Corollary \ref{forPoigeo} we have $\Omega_\xi^D(\varphi(\zeta))=\frac{\Omega_1^\D(\zeta)}{\varphi'_N(1)}$, hence there exists $c_p>0$ such that $Y(t)=-c_p\exp(-t)$, that is, for all $z\in D$
\begin{equation}\label{eqann1}
\Omega^D_\xi(z)=-c_p\exp(-h^D_{\xi,p}(z)).
\end{equation}
Now, let $\rho\colon D\to D$ given by $\rho(z_1,z_2)=(z_1,0)$. Notice that it is a holomorphic retraction of $D$ and $M:=\rho(D)=A_r\times\{0\}$ is a holomorphic retract which is biholomorphic to the annulus $A_r$. This implies that $k_{D}|_{M\times M}=k_{M}$, and thus for all $z_1\in A_r$ we have
\begin{equation}\label{eqann2}h^D_{\xi,p}(z_1,0)=h_{1,p_1}^{A_r}(z_1).\end{equation}
Now we will show that, for all $z_1\in A_r$,
\begin{equation}\label{eqann3}\Omega^D_\xi(z_1,0)=\Omega^{A_r}_1(z_1).\end{equation}
First of all, the function $z\in D\mapsto\Omega_1^{A_r}(z_1)$ is in the family $\mathscr{G}^D_\xi$, so by maximality $\Omega_1^{A_r}(z_1)\leq \Omega^D_\xi(z_1,0)$. We obtain the other inequality noticing that the function $z\in A_r\to \Omega^D_\xi(z,0)$ is in the family $\mathscr{G}^{A_r}_1$, so $\Omega^D_\xi(z_1,0)\leq \Omega_1^{A_r}(z_1)$.

So combining (\refeq{eqann1}), (\refeq{eqann2}) and (\refeq{eqann3}), we have, for all $z_1\in A_r$, $$\Omega^{A_r}_1(z_1)=-c_p\exp(-h_{1,p_1}^{A_r}(z_1)),$$
but this contradicts Lemma \ref{hvsomegaannulus}.
\endproof

\end{document}